\newcommand{\bbC}{\C}
 \newcommand{\la}{\langle}
 \newcommand{\ra}{\rangle}
\renewcommand{\bH}{H}
\newcommand{\bbr}{\mathbb{R}}
\newcommand{\bbe}{\mathbb{E}}
\newcommand{\bbn}{\mathbb{N}}
\newcommand{\bbp}{\mathbb{P}}
\newcommand{\bbq}{\mathbb{Q}}
\newcommand{\bbb}{\mathbb{B}}
\newcommand{\calf}{\mathscr{F}}
\newcommand{\calx}{\mathcal{X}}
\newcommand{\Id}{{\rm Id}}
\newcommand{\lin}{{\rm lin}}
\newcommand{\ran}{{\rm ran}}
\definecolor{orange}{RGB}{255,140,0}
\begin{document}

\title{{\sf \large Infinite dimensional affine processes }}
    \author {Thorsten Schmidt}
    \address{Freiburg Institute of Advanced Studies (FRIAS), Germany. 
 University of Strasbourg Institute for Advanced Study (USIAS), France. 
 University of Freiburg, Department of Mathematical Stochastics, Ernst-Zermelo-Str. 1, 79104 Freiburg, Germany. }
 
    \email{thorsten.schmidt@stochastik.uni-freiburg.de}
    \author{Stefan Tappe}
       \address{Karlsruhe Institute of Technology, Institute of Stochastics, Postfach 6980, 76049 Karlsruhe, Germany. }
    \email{jens-stefan.tappe@kit.edu}
    \author{Weijun Yu}
    \address{d-fine GmbH, An der Hauptwache 7, 60313 Frankfurt am Main, Germany. }
    \email{weijun.yu@d-fine.de}
    \thanks{We are grateful to Christa Cuchiero, Philipp Harms and Josef Teichmann for valuable discussions. We are also grateful to the editor and two referees for helpful comments and suggestions. Financial support from the DFG in project number 196379142 and from the Freiburg Institute of Advanced Studies (FRIAS) is gratefully acknowledged.}
    \date{\today}
    \keywords{Infinite dimensional affine process, canonical state space, Riccati equation, stochastic differential equation on a Hilbert space, Yamada-Watanabe theorem, weak solution, retracted subspace with compact embedding, pathwise uniqueness}
\subjclass[2010]{60J25, 60H10}
\maketitle

\begin{abstract}
The goal of this article is to investigate infinite dimensional affine diffusion processes on the canonical state space. This includes a derivation of the corresponding system of Riccati differential equations and an existence proof for such processes, which has been missing in the literature so far. For the existence proof, we will regard affine processes as solutions to infinite dimensional stochastic differential equations with values in Hilbert spaces. This requires a suitable version of the Yamada-Watanabe theorem, which we will provide in this paper. Several examples of infinite dimensional affine processes accompany our results.
\end{abstract}

\vspace{2mm}

\section{Introduction}

Affine processes constitute an important model class due to their analytical tractability; in particular regarding applications in the field of mathematical finance. There is a substantial literature about affine processes in finite dimension. We refer, for example, to \cite{DuffieFilipovicSchachermayer, Filipovic05, Filipovic-Mayerhofer, KellerResselSchachermayerTeichmann2011, GabrielliTeichmann2018} for affine processes on the canonical state space and to \cite{CFMT2010, SpreijVeerman, KellerRessel2013, CuchieroTeichmann2013, KR-Mayerhofer, CKMT2016} for affine processes on more general state spaces. Some recent and related developments are affine processes with stochastic discontinuities (see \cite{KRSchmidtWardenga}), affine processes under parameter uncertainty (see \cite{FadinaNeufeldSchmidt}) and polynomial processes (see \cite{CuchieroKellerresselTeichmann2012, FilipovicLarsson2016, Cuchiero2018, CuchieroLarssonSvaluto2018b}).

Only recently, increasing interest evolved in infinite dimensional affine processes: the theory of probability measure-valued processes has been utilized in \cite{CuchieroLarssonSvaluto2018} for the study of polynomial diffusions. We also mention the works \cite{Handa}, \cite{Tomczyk} and \cite{Hambly}, where some examples, such as infinite dimensional square-root processes and infinite dimensional Heston type processes, are treated within the framework of probability measure-valued stochastic processes. Another recent approach to polynomial processes in infinite dimension is the paper \cite{BenthDeteringKruehner2018}, where the notion of a polynomial process -- in the sense that polynomials are preserved under conditional expectations -- is extended to a Banach space.

The general study in \cite{Grafendorfer} deals with affine processes in infinite dimension on general state spaces; more precisely affine processes are understood as processes with an {exponential} affine structure of the characteristic exponent, and they are studied on topological vector spaces which do not need to be separable or metrizable. The work \cite{YuWeijun}, of which the present paper constitutes a further development in certain aspects, studies the special case of affine processes with values in separable Hilbert spaces; with a special focus to applications in finance. Some recent articles deal with particular examples of affine processes with values in Hilbert spaces, such as Ornstein-Uhlenbeck processes with stochastic volatility and tensor Heston type processes; see, for example \cite{BenthRuedigerSuess2018} and \cite{BenthSimonsen2018}.

The paper \cite{Tappe-affin} is between the finite and the infinite dimensional setting. More precisely, therein it has been investigated when the solutions to a (infinite dimensional) stochastic partial differential equation admit a finite dimensional realization with (finite dimensional) affine state processes.

The goal of the present paper is to explore infinite dimensional affine diffusion processes on the canonical state space. This includes a derivation of the corresponding system of Riccati differential equations and an existence proof for infinite dimensional affine processes, which has been missing in the literature so far. For the existence proof, we regard affine processes as solutions to infinite dimensional stochastic differential equations (SDEs) with values in Hilbert spaces. This requires a suitable version of the Yamada-Watanabe theorem, and -- in order to apply the Yamada-Watanabe theorem -- sufficient conditions for the existence of weak solutions and for pathwise uniqueness of solutions. Infinite dimensional versions of the Yamada-Watanabe theorem can be found in \cite{Ondrejat}, \cite{Roeckner} and \cite{Tappe-YW}. However, none of these results can directly be applied in our setting, and for this reason we provide a self-contained version in this paper. In order to ensure the existence of weak solutions, we establish a refined version of a result from \cite{GMR}, where the main idea is to consider starting points from an appropriate retracted subspace with compact embedding, and for this reason we need a suitably adjusted version of the Yamada-Watanabe theorem. The pathwise uniqueness follows from a version of the uniqueness result from \cite{Yamada-Watanabe-1971} in infinite dimension.

The remainder of this paper is organized as follows. In Section \ref{sec-affine} we introduce affine processes and derive a general Riccati system for the functions appearing in the characteristic exponent. In Section \ref{sec-existence} we provide the existence result for affine processes in the spirit of strong solutions to infinite dimensional SDEs. In Section \ref{sec-examples} we present examples, where our existence result applies; this includes infinite dimensional processes of Cox-Ingersoll-Ross type and infinite dimensional processes of Heston type. For convenience of the reader, the proof of Lemma \ref{lemma 2.8} is deferred to Appendix \ref{sec-aux}. Moreover, Appendix \ref{app-SDE} contains the required results about SDEs in Hilbert spaces; in particular the adjusted version of the Yamada-Watanabe theorem, and the mentioned results about existence of weak solutions and pathwise uniqueness. Finally, in Appendix \ref{app-linear-operators} we provide the required results about linear operators in Hilbert spaces.

\section{Infinite dimensional affine processes}\label{sec-affine}

Affine models and their applications to dynamic term structure modelling have been intensively studied, mostly focusing on finite-dimensional affine models where the dimension, or the number of factors, is known and fixed. Here, we do not restrict the number of factors to be known or finite but rather study affine processes from an infinite-dimensional perspective. For practical applications, this allows to treat the number of factors as unknown parameter which has to be estimated. For the construction of infinite dimensional affine processes we follow the approaches in \cite{DuffieFilipovicSchachermayer,KellerResselSchachermayerTeichmann2011}. The used techniques for Hilbert-space valued stochastic analysis is taken from \cite{DaPratoZabczyk}.

Let $(\bH,\langle \cdot, \cdot \rangle)$ be an infinite-dimensional and separable Hilbert space with scalar product $\langle \cdot, \cdot \rangle$ and associated norm $\parallel \cdot \parallel$. The adjoint of a linear operator $T\in L(H)$ is denoted by $T^*$. By $\ccB(\bH)$ we denote the associated Borel $\sigma$-algebra. We fix throughout an orthonormal basis $(e_i)_{i=1}^\infty$ of $\bH$.  %

Affine processes are characterized by  the convenient property that their Fourier transforms have exponential affine form. For the study of Fourier transform we introduce the following \emph{complexification} of $\bH$: set
$$ \bH_{\bbC} = \{x+iy: x,y \in \bH\} $$
and equip it with the inner product $\langle x+iy, u+iv \rangle_{H_\bbC}:= \langle x, u\rangle + \langle y, v \rangle + i \langle y, u \rangle -i \langle x, v \rangle$. Then $\bH_\bbC$ is a complex Hilbert space. For $z=x+iy \in \bH_\bbC$ we call $x=\re(z)$ and $y=\im(z)$ the \emph{real} and \emph{imaginary} part of $z$. Furthermore, we denote by $\bar z:= \re(z)-i\im(z)$ the complex conjugate of $z$ and the imaginary subspace of $\bH$ by $i\bH=\{z \in \bH_\bbC: \re(z)=0\}$. The space of complex numbers with non-positive real part is denoted by $\bbC_-  = \{ c \in \bbC:\re (c) \le 0 \}$.

\subsection{Affine processes}
We are interested in \emph{homogeneous} infinite-dimensional continuous affine processes and introduce the following definition. While we do not aim at the greatest level of generality, we use a standard definition of affine processes. For a slightly more general approach (in finite dimensions) see \cite{KellerResselSchachermayerTeichmann2011}. The time-inhomogeneous case can be treated as in \cite{Filipovic05} and \cite{KRSchmidtWardenga}. 

Consider a closed subset $\cX\subset H$ which will serve as state space of our affine process and assume that the 
closure of the affine hull of $\cX$ is the full space $H$. Let $(\Omega,\cF,\bbF)$ be a filtered space on which a family of probability measures $(P_x)_{x \in \cX}$ is given. The filtration $\bbF$ is right-continuous and $P_x$-complete for all $x \in \cX$. Finally, consider a continuous process $X$ with values in $\cX$ and denote its transition kernel by
\begin{align*}
   p_t(x,A) = P_x(X_t \in A), 
\end{align*} 
for $t \ge 0, x \in \cX, A \in \ccB(H)$. We assume that the transition kernel is a Markov transition kernel, i.e.\ it satisfies the following properties (cf. \cite{ethier-kurtz-86})
\begin{enumeratei}
\item $x \mapsto p_t(x,A)$ is $\ccB(H)$-measurable for each $(t,A) \in \R_{\ge 0}\times \ccB(H)$,
\item $p_0(x,\{x\})=1$ for all $x \in \cX$,
\item $p_t(x,\cX)=1$ for all $(t,x) \in \R_{\ge 0}\times H$,
\item $p$ satisfies the \emph{Chapman-Kolmogorov equation}, i.e. for each $t,s \ge 0$ and $(x,A)\in H\times \ccB(H)$, it holds that
\begin{align}
p_{t+s}(x,A) = \int p_t(y,A) p_s(x,dy).
\end{align}
\end{enumeratei}

The affine property of the Markov process $X$ is characterized via its Fourier transform. 
The convex cone where the Fourier transform is defined by 
$$ \cU:= \big\{ u \in H_\bbC: \sup_{x \in \cX} \re(\langle u,x\rangle_{H_\bbC}) < \infty \big\}. $$
Then the function $\cX \ni x \mapsto e^{\langle u,x\rangle}$ is bounded if and only if $x \in \cU$. Moreover,  $iH \subset \cU$.

For a function $\phi:\R_{\ge 0} \to H$ the concepts of Fr\'echet and Gateaux differentiability coincide and we call $\phi$ \emph{differentiable} with derivative $D_t \phi, t \ge 0$ being a vector $D_t \phi(t) \in H$, if for every $t \ge 0$ it holds that 
$$ \lim_{\varepsilon \to 0} \frac{\parallel \phi(t+\varepsilon) - \phi(t) - \varepsilon D_t \phi(t)\parallel } {|\varepsilon|} = 0. $$

\begin{definition}\label{def:affine} An $H$-valued continuous process $X$ with transition kernel $p_t(x,A)$ is called \emph{affine} with state space $\cX$, if there exist functions $\phi : \R_{\ge 0} \times \cU \to \bbC$ and $\psi : \R_{\ge 0} \times \cU \to H_\bbC$ such that
\begin{enumeratei}
\item $\phi(\cdot, u)$ and $\psi(\cdot,u)$ are differentiable for each $u\in \cU$,
\item the derivatives $D_t \phi(t,u)$ and $D_t \psi(t,u)$ are jointly continuous, and
\item the Fourier-transform has  exponential affine dependence on the initial value, i.e. for all $t \ge 0, x \in \cX$, and $u \in \cU$ it holds that  
\begin{align}\label{eq:affine}
\int e^{\langle u,y \rangle_{H_\bbC}} p_t(x,dy) = \exp\big( \phi(t,u) + \langle \psi(t,u), x \rangle_{H_\bbC} \big). \end{align}
\end{enumeratei}
\end{definition}
Uniqueness of $\phi$ and $\psi$ holds under the normalization $\phi(0,u)=0$ and $\psi(0,u)=u$. Finite-dimensional affine processes can be viewed as a special case when $H=\R^n$. In this case, Definition \ref{def:affine} coincides with the affine class studied in \cite{KellerResselSchachermayerTeichmann2011}.

As a next step we study infinite-dimensional diffusions and classify the affine ones. First, we split the state space in the non-negative part and the unrestricted part. Note that in contrast to the usual procedure in finite dimensions, we gain additional freedom as the basis can be chosen in a suitable way. For any index set $K \subset \N$ we denote the canonical projection to the subspace $H_K$ by $\pi_K: x \mapsto \sum_{k \in K} \langle x, e_k \rangle e_k$ and for $x \in H$ we simply write $x_K=\pi_K x$.  Assume that the state space of $\cX$ is the direct sum 
\begin{align}\label{Dsep}
	\cX=H_I^+\oplus H_J
\end{align} 
where  $I,J \subset \N$ are two disjoint sets such that $I \cup J = \N$, and $H_I^+ := \{ \sum_{i \in I} \langle x, e_i \rangle e_i: x \in H, \langle x, e_k \rangle \ge 0 \}\subset H_I$. Then $\cX$ is a total set, i.e.\ the closure of its span is the full space $H$ and for any $x\in \cX$ we obtain  the unique decomposition $x=x_I+x_J$. 
Using this structural assumption on the state space $\cX$, the set $\cU$ can be determined precisely as follows: for $x \in H$ we write $x \le 0$ if $\la x, e_k \ra \le 0$ for all $k\in \NN$ and similar for $<$,$>$ or $\ge$. It turns out that under \eqref{Dsep},
\begin{align}
\label{Usep} 
	\cU=\big\{ u \in H_\bbC: \re(u_I) \le 0 \text{ and }\re(u_J)=0\big\}.
\end{align}
Moreover,  the finite-dimensional affine processes studied in \cite{DuffieFilipovicSchachermayer} can be viewed as special case with $H=\R^n$ and $\cX=\R_{\ge 0}^i \oplus \R^j$ and $i+j=n$.

\begin{remark}\label{rem2.1}
Fix $t \ge 0$. 
If $X$ is affine and the state space satisfies \eqref{Dsep}, then it follows from Equation \eqref{eq:affine} that, for all $x \in \cX$ and $u \in \cU$, 
	\begin{align*}
		e^{\re (\phi(t,u) + \langle \psi(t,u), x\rangle_{H_\bbC})}
		& = |e^{\phi(t,u) + \langle \psi(t,u), x\rangle_{H_\bbC}}| \\
		& \le \int |e^{\langle u, y\rangle_{H_\bbC}}| \, p_t(x,d y)  \le \int e^{\re\langle u, y\rangle_{H_\bbC}} p_t(x,d y)  \le 1,
	\end{align*}
	since $\re\langle u, y\rangle_{H_\bbC} \le 0$ for $u \in \cU$ and $y \in \cX$. Hence,  $\re (\phi(t,u) + \langle \psi(t,u), x\rangle_{H_\bbC}) \le 0$ for  all $x \in \cX$ and $u \in \cU$ which is equivalent to $(\phi(t,u),\psi(t,u)) \in \bbC_-\times \cU$ for all $u\in\cU$.
\end{remark}

We are interested in those Markov processes which are strong solutions of stochastic differential equations with respect to an infinite-dimensional Brownian motion. We follow the construction of a stochastic integral laid out in \cite{DaPratoZabczyk}. To this end,  denote the trace of a symmetric and non-negative operator $Q$ by $\Tr Q=\sum_{i=1}^\infty \langle Q e_i,e_i \rangle$ and call the operator $Q$ trace-class if $\Tr Q< \infty$.  Let $W$ be an $H$-valued $\bbF$-Brownian motion with covariance operator $\Sigma_W$, i.e.\ $\Sigma_W$ is a symmetric and non-negative definite operator $\Sigma_W$ with $\Tr \Sigma_W< \infty$. Then  there exists $\Sigma_W^{\nicefrac{1}{2}}$ such that $\Sigma_W = \Sigma_W^{\nicefrac{1}{2}} (\Sigma_W^{\nicefrac{1}{2}})^*$. 
Denote $H_0:= \Sigma_W^{\nicefrac{1}{2}} H$ and by $HS(H_0;H)$ the space of all Hilbert-Schmidt operators from $H_0$ to $H$, i.e.\ linear operators $Q$ such that $\sum_{i=1}^\infty \langle Q \Sigma_W^{\nicefrac{1}{2}} e_k, \Sigma_W^{\nicefrac{1}{2}}e_k \rangle^2 < \infty$. 

We assume, that for each $x_0\in \cX$, $X=X^{x_0}$ is the unique strong solution to the stochastic differential equation 
\begin{align}\begin{aligned}
dX_t & = \mu(X_t) dt + \sigma(X_t) dW_t, \\
X_0 &= x_0 \end{aligned}\label{SDE:X}
\end{align}
where $\mu: H \to H$ and $\sigma: H \to HS(H_0;H)$ are continuous; compare Theorem \ref{thm-main-ex} for precise conditions ensuring the existence of a unique strong solution. 

By $S(\cdot) := \sigma(\cdot) \Sigma_W \sigma(\cdot)^*$ we denote the dispersion operator of $X$, such that $d[X,X]_t = S(X_t) dt$.  The next result shows that $S(x)$ is a trace-class operator for each $x \in \cX$ and that $x \mapsto \Tr S(x)$ is a real-valued and continuous function.

\begin{lemma}\label{lem:alphax}
For each $x \in \cX$ the operator $S(x)$ is non-negative definite and trace-class. Moreover, the mapping $\Tr S(\cdot):H \to \R$ is continuous.
\end{lemma}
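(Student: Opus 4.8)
The plan is to reduce everything to the single Hilbert--Schmidt operator $A := \sigma(x)\Sigma_W^{\nicefrac{1}{2}}$ on $H$ and then read off all three assertions from it. First I would record the elementary fact that the inner product on $H_0 = \Sigma_W^{\nicefrac{1}{2}}H$ is constructed precisely so that $\Sigma_W^{\nicefrac{1}{2}}$ acts isometrically; hence the orthonormality of $(e_k)$ in $H$ transfers to the family $(\Sigma_W^{\nicefrac{1}{2}} e_k)_k$, which therefore constitutes an orthonormal basis of $H_0$ (those $e_k$ lying in $\ker \Sigma_W^{\nicefrac{1}{2}}$ simply drop out). Consequently the defining membership $\sigma(x)\in HS(H_0;H)$ is exactly the statement that $\sum_k \|\sigma(x)\Sigma_W^{\nicefrac{1}{2}} e_k\|^2 < \infty$, i.e. $A \in HS(H;H)$ with $\|A\|_{HS(H;H)} = \|\sigma(x)\|_{HS(H_0;H)}$.

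Next I would identify the dispersion operator as $S(x) = A A^*$, where $A^*$ now denotes the ordinary $H$--adjoint. Using the factorization $\Sigma_W = \Sigma_W^{\nicefrac{1}{2}}(\Sigma_W^{\nicefrac{1}{2}})^*$ and unwinding the adjoints, one checks that $\sigma(x)\Sigma_W\sigma(x)^* = (\sigma(x)\Sigma_W^{\nicefrac{1}{2}})(\sigma(x)\Sigma_W^{\nicefrac{1}{2}})^* = AA^*$; this is exactly the form in which the quadratic variation of the stochastic integral appears in the Da Prato--Zabczyk theory, so the identity may be taken from there. With $S(x) = AA^*$ in hand, non-negative definiteness is immediate, since $\langle S(x)h,h\rangle = \langle A^*h, A^*h\rangle = \|A^*h\|^2 \ge 0$ for every $h \in H$. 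The trace-class property, together with the value of the trace, then follows from the standard computation $\Tr(AA^*) = \sum_i \|A^* e_i\|^2 = \|A^*\|_{HS(H;H)}^2 = \|A\|_{HS(H;H)}^2$, which is finite by the first step. In particular I obtain the clean formula $\Tr S(x) = \|\sigma(x)\|_{HS(H_0;H)}^2$.

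Continuity of $\Tr S(\cdot)$ is then automatic: it is the composition $x \mapsto \sigma(x) \mapsto \|\sigma(x)\|_{HS(H_0;H)}^2$ of the continuous map $\sigma \colon H \to HS(H_0;H)$ with the (continuous) squared norm on $HS(H_0;H)$. The only genuinely delicate point is the adjoint bookkeeping in the middle step: one must keep track of which adjoint is meant and of the difference between the $H_0$--inner product (entering the definition of $\sigma(x)^*$) and the $H$--inner product (entering $A^*$), in order to be sure that the formula $S(x) = \sigma(x)\Sigma_W\sigma(x)^*$ really coincides with $AA^*$. Once that identification is secured, the remaining arguments are routine consequences of the basic relations between Hilbert--Schmidt and trace-class operators.
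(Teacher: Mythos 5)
Your argument is correct, and it differs from the paper's proof in the one step that carries real content. Both proofs reduce to the operator $A=\sigma(x)\Sigma_W^{\nicefrac{1}{2}}$ and both get non-negative definiteness from $\la S(x)h,h\ra=\|A^*h\|^2\ge 0$. For the trace-class property, however, the paper uses the cyclic property of the trace together with the product inequality $\Tr\big((\sigma(x)Q)^*(\sigma(x)Q)\big)\le \Tr(\sigma(x)\sigma(x)^*)\,\Tr\Sigma_W$ with $Q=\Sigma_W^{\nicefrac{1}{2}}$, which only yields an upper bound and, strictly speaking, invokes finiteness of $\Tr(\sigma(x)\sigma(x)^*)$, i.e.\ that $\sigma(x)$ is Hilbert--Schmidt as an operator on $H$ itself. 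You instead observe that $(\Sigma_W^{\nicefrac{1}{2}}e_k)_k$ is an orthonormal basis of $H_0$, so that $\sigma(x)\in HS(H_0;H)$ is literally the statement $A\in HS(H;H)$, and you compute the trace exactly: $\Tr S(x)=\Tr(AA^*)=\|A\|_{HS(H;H)}^2=\|\sigma(x)\|_{HS(H_0;H)}^2$. This buys two things: the argument uses only the stated hypothesis $\sigma(x)\in HS(H_0;H)$ (which is a priori weaker than $\sigma(x)\in HS(H;H)$, since $H_0$ carries a finer norm), and the resulting identity makes the continuity of $x\mapsto \Tr S(x)$ an immediate composition of the continuous map $\sigma$ with the squared Hilbert--Schmidt norm, whereas the paper disposes of continuity in one terse sentence. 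Your cautionary remark about distinguishing the $H_0$-adjoint from the $H$-adjoint in the identification $S(x)=AA^*$ is well placed; note that in Section 3 the paper in fact \emph{defines} $S(x)=\sigma(x)\Sigma_W^{\nicefrac{1}{2}}\big(\sigma(x)\Sigma_W^{\nicefrac{1}{2}}\big)^*$, so that identification is consistent with how the object is used later.
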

\begin{proof}
Note that $\Sigma_W$ is a symmetric, non-negative definite and trace-class operator. Then, it follows that for $x \in \cX$ and $h \in H$
$$ h S(x) h^* = (h \sigma(x)) \Sigma_W (h \sigma(x))^* \ge 0 $$
such that $S(x)$ is also symmetric and non-negative definite. We denote $Q=\Sigma_W^{\nicefrac{1}{2}}$ such that $\Sigma_W = Q Q^*$. From the cyclic property of the trace and the Cauchy-Schwarz inequality it follows that
\begin{align*}
   \Tr S(x) &= \Tr( (\sigma(x) Q) (\sigma(x) Q)^*  ) 
   = \Tr ( (\sigma(x) Q)^* (\sigma(x) Q)  ) \\  %
   & \le \Tr( \sigma(x) \sigma(x)^* )\cdot \Tr (Q Q^* ) = \Tr(\sigma(x) \sigma(x)^*) \, \Tr \Sigma_W < \infty,
\end{align*}
because for each $x\in H$, $\sigma(x) \in HS(H_0;H)$, and hence $\Tr(\sigma(x) \sigma(x)^*)<\infty$. The continuity from $S(x)$ now follows from the continuity of $\sigma(x)$.
\end{proof}

\begin{theorem}\label{thm:affine1}
Assume that the process $X$, given as unique strong solution of \eqref{SDE:X}, is affine. Then for all $x \in \cX$ it holds that
\begin{align} \label{affcoeffe} \begin{aligned}
\mu(x) &= m_0 + M x \\
S(x) &= n_0 + N x \end{aligned}
\end{align}
with $m_0 \in \cX$, $M \in L(H)$, $n_0 \in L(H)$ and $N \in L(H,L(H))$. Denote $n_k = N e_k$ and $m_k = M e_k$, $k=1,2,\dots$. 
The coefficients $n_k$ are symmetric, non-negative definite and trace-class operators which satisfy $n_j=0$ for all $j \in J$ and 
\begin{align} \label{condN}
   \sum_{i \in I} (\Tr n_i)^2 < \infty.
\end{align} 
The functions $\phi$ and $\psi_k(t,u):=\langle \psi(t,u),e_k\rangle_{H_\bbC}$, $k=1,2,\dots$ satisfy the general Riccati system
\begin{align}\label{Riccati1}\begin{split}
\partial_t \phi(t,u) &= \langle m_0, \overline{\psi(t,u)} \rangle_{{H_\bbC}} + \half \langle n_0 \psi(t,u), \overline{\psi(t,u)} \rangle_{H_\bbC} \\
\phi(0,u) &= 0 \end{split}\\
\label{Riccati2}\begin{split}
\partial_t \psi_k(t,u) &= \langle m_k, \overline{\psi(t,u)} \rangle_{H_\bbC} + \half \langle n_k \psi(t,u), \overline{\psi(t,u)} \rangle_{H_\bbC}, \quad k=1,2,\dots \\
\psi(0,u) &= u, \end{split}
\end{align}
for all $t \ge 0$ and $u \in \cU$. 
\end{theorem}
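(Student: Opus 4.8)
The plan is to identify the infinitesimal generator $\mathcal{A}$ of $X$ on the exponentials $f_u(x) := e^{\langle u,x\rangle_{H_\bbC}}$, $u \in \cU$, and to read off the coefficients and the Riccati system from a single master identity obtained by differentiating the affine representation \eqref{eq:affine} at $t=0$. First I would fix $u \in \cU$ and $x \in \cX$ and apply the Hilbert-space It\^o formula (to the real and imaginary parts of $f_u$) under $P_x$, using $d[X,X]_t = S(X_t)\,dt$ from Lemma \ref{lem:alphax}. A direct computation with the complex inner product (writing $u=\re(u)+i\,\im(u)$ and using that $\mu(x)$ and $S(x)$ are real and $S(x)$ symmetric) gives
\[
\mathcal{A}f_u(x) = \Big( \langle \mu(x), \overline{u} \rangle_{H_\bbC} + \half \langle S(x) u, \overline{u} \rangle_{H_\bbC} \Big) f_u(x),
\]
so that $f_u(X_t) - f_u(x) - \int_0^t \mathcal{A}f_u(X_s)\,ds$ is a local martingale; the sign conventions are exactly what make the conjugates $\overline{u}$ appear, matching the right-hand sides of \eqref{Riccati1}--\eqref{Riccati2}.

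Next I would combine this with the affine property. Since $u \in \cU$ and $X_s \in \cX$, the estimate $|f_u(X_s)| = e^{\re\langle u,X_s\rangle_{H_\bbC}} \le 1$ from Remark \ref{rem2.1} holds, which after a localisation and a dominated-convergence argument lets me take $P_x$-expectations:
\[
e^{\phi(t,u) + \langle \psi(t,u),x\rangle_{H_\bbC}} = f_u(x) + \int_0^t \mathbb{E}_x\big[ \mathcal{A}f_u(X_s) \big]\,ds.
\]
Differentiating at $t=0$ (using $\phi(0,u)=0$, $\psi(0,u)=u$, path-continuity of $X$ and continuity of $\mu,S$) and cancelling the nonzero factor $f_u(x)$ yields, for all $x \in \cX$ and $u \in \cU$, the master identity
\[
\partial_t\phi(0,u) + \langle \partial_t\psi(0,u), x \rangle_{H_\bbC} = \langle \mu(x), \overline{u} \rangle_{H_\bbC} + \half \langle S(x) u, \overline{u} \rangle_{H_\bbC}.
\]
In parallel, the Chapman--Kolmogorov equation applied to $f_u$ gives the semiflow relations $\phi(t+s,u)=\phi(s,u)+\phi(t,\psi(s,u))$ and $\psi(t+s,u)=\psi(t,\psi(s,u))$; differentiating at $t=0$ shows $\partial_t\phi(t,u)=F(\psi(t,u))$ and $\partial_t\psi(t,u)=R(\psi(t,u))$, where $F(w):=\partial_t\phi(0,w)$ and $R(w):=\partial_t\psi(0,w)$, so it suffices to identify $F$ and $R$ from the master identity.

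To extract the affine structure I would specialise the master identity to $u=i\xi$ with $\xi\in H$ (so $u\in iH\subset\cU$) and take real and imaginary parts; as the left-hand side is affine in $x$, this forces $x\mapsto\langle\mu(x),\xi\rangle$ and $x\mapsto\langle S(x)\xi,\xi\rangle$ to be affine on $\cX$ for every $\xi$. Evaluating at $0\in\cX$ sets $m_0:=\mu(0)$ and $n_0:=S(0)$, with $m_0\in\cX$ since the process started at $0$ stays in $\cX$ (forcing the $I$-components of the drift at the boundary to be non-negative); the linear parts, being continuous (continuity of $\mu$ and $S$) and linear, are bounded, so by Riesz representation and polarisation $\mu(x)=m_0+Mx$ and $S(x)=n_0+Nx$ with $M\in L(H)$ and $N\in L(H,L(H))$. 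Writing $n_k=Ne_k$, the operators $S(te_k)=n_0+t\,n_k$ are symmetric, non-negative definite and trace-class for admissible $t$ by Lemma \ref{lem:alphax}; letting $t\to\infty$ along $te_i\in\cX$ for $i\in I$ transfers these properties to each $n_i$, whereas for $j\in J$ the membership $te_j\in\cX$ for all real $t$ forces $\langle n_j\xi,\xi\rangle=0$ for every $\xi$, hence $n_j=0$. Finally, $\Tr S(\cdot)$ is continuous and affine with linear part $x\mapsto\sum_{i\in I}\langle x,e_i\rangle\,\Tr n_i$; a continuous linear functional is bounded, so it equals $\langle x,v\rangle$ for some $v\in H$, giving $\sum_{i\in I}(\Tr n_i)^2=\|v\|^2<\infty$, which is \eqref{condN}.

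It then remains to assemble the system: substituting the affine forms into the master identity and matching the constant part and the coefficient of $\langle x,e_k\rangle$ gives
\[
F(u) = \langle m_0, \overline{u} \rangle_{H_\bbC} + \half \langle n_0 u, \overline{u} \rangle_{H_\bbC}, \qquad \langle R(u), e_k \rangle_{H_\bbC} = \langle m_k, \overline{u} \rangle_{H_\bbC} + \half \langle n_k u, \overline{u} \rangle_{H_\bbC},
\]
and substituting $w=\psi(t,u)$ into $\partial_t\phi(t,u)=F(\psi(t,u))$ and $\partial_t\psi_k(t,u)=\langle R(\psi(t,u)),e_k\rangle_{H_\bbC}$ reproduces \eqref{Riccati1} and \eqref{Riccati2}, with the initial conditions coming from the normalisation. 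The main obstacle I anticipate is the analytic justification of the generator step in infinite dimensions: showing that $f_u$ is amenable to the It\^o formula and that $\tfrac{d}{dt}\big|_{0}\mathbb{E}_x[f_u(X_t)]=\mathcal{A}f_u(x)$, i.e.\ controlling $s\mapsto\mathbb{E}_x[\mathcal{A}f_u(X_s)]$ near $s=0$ when only continuity (not boundedness) of $\mu$ and $S$ is available --- the bound $|f_u|\le1$ tames the exponential, but the localisation and uniform-integrability needed to pass $\partial_t$ through the expectation is the delicate part. A secondary, more routine difficulty is upgrading the pointwise affine relations on $\cX$ to genuine bounded operators $M$ and $N$ on $H$.
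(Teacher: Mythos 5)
Your proposal is correct in outline but follows a genuinely different route from the paper: you take the classical generator/semiflow approach (compute $\mathcal{A}f_u$ for $f_u(x)=e^{\la u,x\ra_{H_\bbC}}$, obtain the master identity at $t=0$, and propagate to all $t$ via the Chapman--Kolmogorov semiflow relations $\phi(t+s,u)=\phi(s,u)+\phi(t,\psi(s,u))$, $\psi(t+s,u)=\psi(t,\psi(s,u))$), whereas the paper applies It\^{o}'s formula directly to the exponential martingale $M_t^u=\exp\big(\phi(T-t,u)+\la\psi(T-t,u),X_t\ra_{H_\bbC}\big)$ and forces its drift to vanish, which yields the identity \eqref{temp324} for \emph{all} $t$ in one stroke, with no semiflow argument needed. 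The downstream steps (affineness in $x$ of the master identity, reading off $m_0,M,n_0,N$, transferring symmetry/positivity/trace-class to the $n_i$ and killing $n_j$ for $j\in J$, and condition \eqref{condN}) agree in substance; for \eqref{condN} you use continuity of $\Tr S(\cdot)$ plus Riesz representation where the paper uses the uniform boundedness principle on the truncations $T_nx=\sum_{i\le n}\la x,e_i\ra\Tr n_i$ --- both work, and your treatment of $m_0\in\cX$ is actually more explicit than the paper's.

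The one place where your route is materially weaker is exactly the step you flag: passing $\partial_t$ through $\bbe_x[\cdot]$ in the Dynkin-type formula. At that stage of the argument $\mu$ and $S$ are only known to be continuous, not of linear growth, so $\bbe_x\big[\int_0^t|\mathcal{A}f_u(X_s)|\,ds\big]<\infty$ is not available, and the bound $|f_u|\le 1$ does not control the drift factor $\la\mu(X_s),\overline{u}\ra+\tfrac12\la S(X_s)u,\overline{u}\ra$. The paper's device avoids this entirely: $M^u$ is a bounded martingale \emph{a priori}, because $M_t^u=\bbe[e^{\la u,X_T\ra_{H_\bbC}}\,|\,\calf_t]$ by the affine formula and the tower property; hence after It\^{o}'s formula the finite-variation part $\int_0^t M_s^u I_s\,ds$ is a continuous local martingale of finite variation and therefore vanishes, with no integrability of $I$ required beyond pathwise local integrability. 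If you want to keep your structure, the cleanest repair is to replace the expectation step by this same observation (equivalently, work with $f_u(X_t)e^{-\phi(t,u)-\la\psi(t,u),x\ra_{H_\bbC}}$-type martingales), rather than trying to justify the interchange by localisation and dominated convergence.
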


\begin{proof}
When $X$ is an affine process, then the processes
\begin{align*}
M_t^u := \exp\big( \phi(T-t,u)+ \langle \psi(T-t,u),X_t \rangle_{H_\bbC}\big), \quad 0 \le t \le T
\end{align*}
are martingales for all $u \in \cU$ since $M_t^u = \E[\exp\big(  \langle u,X_T \rangle_{H_\bbC} \big)|\cF_t] = \E[M_T^u|\cF_t]$.
Next, we apply the It\^o-formula, see Theorem 4.32 in \cite{DaPratoZabczyk}, to $M_t^u$ with $f(t,x)= \exp\big( \phi(T-t,u)+ \langle \psi(T-t,u),x \rangle_{H_\bbC}\big)$. Note that, by Lemma \ref{lem:alphax},
$$ \partial_t f(t,x) = f(t,x) \big( -\partial_t \phi(T-t,u) - \langle D_t \psi(T-t,u),x \rangle_{H_\bbC}). $$
Hence,
\begin{align} \label{temp313} 
dM_t^u = M_t^u( I_t dt + \langle \psi(T-t,u), \sigma(X_t) dW_t \rangle_{H_\bbC} ),
\end{align}
where the drift computes to
\begin{align}\label{eq:It}
I_t &= -\partial_t \phi(T-t,u) - \langle D_t \psi(T-t,u),X_t \rangle_{H_\bbC} + \langle \psi(T-t,u), \mu(X_t) \rangle_{H_\bbC} \\
&+ \half \sum_{k=1}^\infty \langle S(X_t) \psi(T-t,u), e_k \rangle_{H_\bbC} \langle \psi(T-t,u), e_k \rangle_{H_\bbC}. \nonumber
\end{align}
The infinite sum equals $ 
 \langle S(X_t) \psi(T-t,u), \overline{\psi(T-t,u)}  \rangle_{H_\bbC}$.
Moreover, the process $M^u$ is a martingale only if $I_t=0$ $dt\otimes dP$-almost surely. By continuity of $I$ it follows even that $I=0$ $P$-almost surely. Letting $t\to 0$, continuity of $X, \mu, S, \phi$, and $\psi$ implies that
\begin{align} \label{temp324}
	\partial_t \phi(t,u) + \langle D_t \psi(t,u),x \rangle_{H_\bbC} &=    \langle \psi(t,u), \mu(x) \rangle_{H_\bbC} 
+ \half \langle S(x) \psi(t,u), \overline{\psi(t,u)} \rangle_{H_\bbC}
\end{align}
holds for all $x \in \cX$ and all $t \ge 0$. The left hand side is an affine function of $x$, and hence the right-hand side is affine in $x$. Using that $\psi(0,u)=u$ we obtain that $\mu$ as well as $S$ are  affine functions of $x$,  such that representation \eqref{affcoeffe} follows. 
Continuity of $\mu$ yields that $M\in L(H)$. Moreover, by Lemma \ref{lem:alphax}, $S(x)$ is a symmetric, non-negative definite and trace-class operator for all $x \in \cX$. This gives that $n_j=N e_j = 0$ for all $j \in J$. Regarding \eqref{condN}, it follows
\begin{align*}
\Tr n_0 + \sum_{i \in I} \langle x,e_i \rangle_H \Tr n_i = \Tr S(x) < \infty 
\end{align*}
for all $x\in \cX$, because $S$ is trace-class. Define $T_n x := \sum_{i \in I, i \le n} \langle x,e_i \rangle_H  \Tr n_i$. Then $T_n \in L(H,\R)$ and 
$$ \sup_{n \in \N} \parallel T_n x \parallel \le \sum_{i \in I} | \langle x,e_i \rangle_H |\Tr n_i < \infty\quad \text{for all } x \in H. $$
By the uniform boundedness principle it follows that $\sum_{i \in I} (\Tr n_i)^2 = \sup_{n \in \N} \parallel T_n \parallel ^2< \infty$ such that \eqref{condN} follows. 

Finally, inserting \eqref{affcoeffe} into \eqref{temp324} and separating terms gives \eqref{Riccati1}-\eqref{Riccati2} since the affine hull of $\cX$ is the full space $H$, where again Lemma \ref{lem:alphax} was used. 
\end{proof}

The converse is solved in two steps. First, we derive some admissibility conditions for the coefficients of the Riccati equations \eqref{Riccati1}-\eqref{Riccati2}, which are equivalent to affinity in the canonical state space. %
Second, we show that these admissibility conditions are sufficient for existence and uniqueness of solutions for the Riccati equations.

\begin{proposition}\label{prop-2-4}
Assume that $X$ is a strong solution of \eqref{SDE:X}, $\mu$ and $S$ are affine in the sense of  \eqref{affcoeffe}
and the Riccati system \eqref{Riccati1}-\eqref{Riccati2} has a solution $(\phi,\psi)$ such that $\phi(t,u) + \langle \psi(t,u),x\rangle_{H_\bbC}$ has a non-negative real part for all $t\ge 0, u\in \cU$ and $x \in \cX$. Then $X$ is an affine process.  
\end{proposition}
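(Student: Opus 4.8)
The plan is to run the martingale argument from the proof of Theorem~\ref{thm:affine1} in reverse. Fix $T>0$ and $u\in\cU$, and define the candidate process
$$M_t^u := \exp\big(\phi(T-t,u)+\langle\psi(T-t,u),X_t\rangle_{H_\bbC}\big),\qquad 0\le t\le T.$$
If I can show that $M^u$ is a true martingale, then the identity $M_0^u=\E_x[M_T^u]$ reads
$$\exp\big(\phi(T,u)+\langle\psi(T,u),x\rangle_{H_\bbC}\big)=\E_x\big[e^{\langle u,X_T\rangle_{H_\bbC}}\big]=\int e^{\langle u,y\rangle_{H_\bbC}}\,p_T(x,dy),$$
which is precisely the affine identity \eqref{eq:affine} at time $T$; since $T>0$ is arbitrary and the case $t=0$ is the normalization $\phi(0,u)=0$, $\psi(0,u)=u$, this yields condition (iii) of Definition~\ref{def:affine}. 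Conditions (i) and (ii) are immediate: because $(\phi,\psi)$ solves the Riccati system \eqref{Riccati1}--\eqref{Riccati2}, the derivatives $D_t\phi$ and $D_t\psi$ exist and equal the right-hand sides, which are jointly continuous in $(t,u)$. So the whole proposition reduces to establishing the martingale property of $M^u$.

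First I would show that $M^u$ is a local martingale. Applying the It\^o formula (Theorem~4.32 in \cite{DaPratoZabczyk}) to $M_t^u$ exactly as in the proof of Theorem~\ref{thm:affine1}, I obtain the dynamics \eqref{temp313} with drift $I_t$ given by \eqref{eq:It}. Now, instead of deducing the Riccati equations from $I_t=0$, I insert the affine forms $\mu(x)=m_0+Mx$ and $S(x)=n_0+Nx$ from \eqref{affcoeffe} together with the hypothesis that $(\phi,\psi)$ solves \eqref{Riccati1}--\eqref{Riccati2}. Collecting the constant term and the coefficient of each $\langle X_t,e_k\rangle_H$ and comparing them with the Riccati right-hand sides shows that $I_t=0$ pathwise for every $t$ (here I use that $X_t\in\cX$, so that the affine expansion is valid, and the elementary identity $\langle\psi,a\rangle_{H_\bbC}=\langle a,\overline\psi\rangle_{H_\bbC}$ for real $a\in H$). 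Hence $dM_t^u=M_t^u\langle\psi(T-t,u),\sigma(X_t)\,dW_t\rangle_{H_\bbC}$ is a pure stochastic integral, so $M^u$ is a local martingale.

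The main obstacle is the passage from local to true martingale, and this is exactly where the real-part hypothesis enters. Since
$$|M_t^u|=\exp\big(\re(\phi(T-t,u)+\langle\psi(T-t,u),X_t\rangle_{H_\bbC})\big),$$
the assumed sign condition on $\re(\phi(t,u)+\langle\psi(t,u),x\rangle_{H_\bbC})$, valid for all admissible $t,u$ and all $x\in\cX$ (in particular $x=X_t\in\cX$), forces $|M_t^u|$ to be bounded uniformly in $t$ and $\omega$ (indeed $|M_t^u|\le 1$, consistent with Remark~\ref{rem2.1}). A bounded local martingale is a genuine, uniformly integrable, martingale, and this closes the argument.

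Along the way I would want to verify two technical points, neither of which should cause real difficulty. First, It\^o's formula must be applied to the complex, $H_\bbC$-valued exponential $x\mapsto\exp(\langle\psi,x\rangle_{H_\bbC})$; this is handled by splitting into real and imaginary parts and using Lemma~\ref{lem:alphax} (trace-class property of $S$) to control the second-order term, exactly as in the proof of Theorem~\ref{thm:affine1}. Second, $X$, being the strong solution of \eqref{SDE:X}, is the Markov process with transition kernel $p_t(x,\cdot)$ under $P_x$, so that $M_0^u=\E_x[M_T^u]$ may legitimately be rewritten as the integral against $p_T(x,dy)$. The genuine content of the proof is the boundedness of $M^u$ delivered by the real-part assumption, which upgrades the local martingale to a martingale and hence produces the exponential-affine form of the Fourier transform.
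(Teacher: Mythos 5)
Your proof is correct and follows essentially the same route as the paper's: apply It\^o's formula to $M^u$, use the Riccati system to kill the drift so that $M^u$ is a local martingale, and then invoke the sign condition on $\re(\phi+\langle\psi,x\rangle_{H_\bbC})$ (which, as you rightly read it despite the wording "non-negative" in the statement, must give $|M^u_t|\le 1$, consistent with Remark \ref{rem2.1}) to upgrade it to a bounded, hence true, martingale and read off \eqref{eq:affine}. Your additional remarks on verifying conditions (i)--(ii) of Definition \ref{def:affine} and on the legitimacy of rewriting $\E_x[M_T^u]$ via the transition kernel are sensible elaborations of points the paper leaves implicit.
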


\begin{proof}
	If \eqref{affcoeffe} and \eqref{Riccati1}-\eqref{Riccati2} hold such that $\phi(t,u) + \langle \psi(t,u),x\rangle_{H_\bbC}$ has a non-negative real part for all $t\ge 0, u\in \cU$ and $x \in \cX$, then it follows as in the proof of Theorem \ref{thm:affine1}, that  the drift $I$ of the process $M^u$ given in  \eqref{temp313} vanishes, such that
	$$
	  d M_t^u = M_t^u \, \langle \psi(T-t,u),\sigma(X_t) d W_t\rangle_{H_\bbC}.
	$$
	Hence,  $M_t^u$ is a continuous local martingale. 
    From the assumption that  $\re (\phi(T-t,u) + \langle \psi(T-t,u),X_t\rangle_{H_\bbC}) \le 0$, it follows that
    $$ M_t^u = \exp( \phi(T-t,u) + \langle \psi(T-t,u),X_t\rangle_{H_\bbC}) $$  
    is uniformly bounded by $1$ and hence $M^u$ is even a true martingale. Consequently, for all $t \ge 0$
     $$
	 E[ e^{\langle u,X_T \rangle_{H_\bbC}} |\cF_t ] = E[M_T^u |\cF_t] = M_t^u = \exp( \phi(T-t,u) + \langle \psi(T-t,u), X_t\rangle_{H_\bbC}).
	 $$
	 Then  \eqref{eq:affine} holds and $X$ is an affine process.
\end{proof}

The next result gives a partial answer to the solvability of the system of Riccati equations \eqref{Riccati1}-\eqref{Riccati2}. We start with some  notation. 
First, define 
	\begin{align*}
		H_\bbC^- & = \{ z=x+i y \in H_\bbC \ |\ \langle x,e_k\rangle_H \le 0 \text{ for all}\ k\in\N \}. 
	\end{align*}
As previously, $(H_\bbC^-)_I$ denotes the projection to the coordinates from set $I$, i.e. $ (H_\bbC^-)_I=  \{\sum_{i\in I} \langle z,e_i \rangle_{H_\bbC} e_i\ |\ z \in H_\bbC^-\}$.
Second, for the two index sets $K,L \subset \N$ and generic $A \in L(H)$ we denote $A_{KL}=\pi_K A|_{H_L}$. Then $A_{K \cup L}$ may be uniquely represented by the  $2\times2$ block operator matrix
			$$
			\begin{pmatrix}
				A_{KK} & A_{KL} \\
				A_{LK} & A_{LL} \\
			\end{pmatrix}.
			$$
			If the index sets are  singletons, we write $A_{kl}$ for $A_{\{k\}\{l\}}$.
Finally, for $x \in H$ we understand $x \le 0$ as $\langle x, e_k \rangle \le 0$ for all $k \in \N$ and $x\ge0$, $x<0$, $x>0$, $x\nless0$, $x\ngtr0$, $x=0$ in the same manner.

\begin{proposition}\label{prop2.5}
Assume that \eqref{Dsep} and  the following admissibility conditions hold:
\begin{align}
	m_0 \in \cX,\ &m_i \in H_{I \backslash\{i\}}^+ \oplus H_{J \cup \{i\} } %
    \text{ for } i \in I, \text{ and }m_j \in H_J \text{ for } j \in J \label{prop1:1}\\
    & \hspace{-5mm} || \sum_{k \ge 1} m_k  \langle \cdot , e_k \rangle || < \infty  \label{prop1:1a}\\
	n_k\in L(H) & \text{ is symmetric, non-negative definite and of trace class, } k \in \N, \label{prop1:2}\\
	n_j &= 0 \text{ for } j \in J, \label{prop1:3}\\
    n_{0,II} &=0, \notag\\
    n_{0,IJ} &= n_{0,JI}^* = 0, \notag\\
    n_{0,JJ} & \text{ is symmetric, non-negative definite and of trace class}, \notag\\
    n_{i,\{kl\}} &= \begin{cases} \ge 0 & \text{ if }i=k=l, \\ =0 & \text{ otherwise}, \end{cases} \quad \text{ for }i,k,l \in I,  \label{prop1:7}\\
    n_{i,IJ} &= n_{i,JI}^* \notag\\
    n_{i,JJ} & \text{ is symmetric, non-negative definite and of trace class}, \notag\\
    & \hspace{-5mm}\sum_{i\in I} \|n_i\|^2  < \infty. \label{prop1:10}
\end{align}

Then the general Riccati system \eqref{Riccati1}-\eqref{Riccati2} has a unique solution $(\phi(\cdot,u),\psi(\cdot,u)) : \R_+ \to \bbC_- \times (H_\bbC^-)_I \oplus iH _J$ for each
$u \in (H_\bbC^-)_I \oplus i H_J$. 
\end{proposition}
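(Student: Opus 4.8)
The plan is to prove existence and uniqueness for the Riccati system by an infinite-dimensional fixed-point / ODE argument, crucially exploiting the admissibility conditions to show that the vector field is locally Lipschitz and that the solution stays in the invariant set $(H_\bbC^-)_I \oplus iH_J$.

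First I would rewrite the system for $\psi$ as an autonomous ODE on the Hilbert space $H_\bbC$. Observe that equations \eqref{Riccati1}–\eqref{Riccati2} decouple in the sense that $\phi$ is obtained from $\psi$ by a direct integration (equation \eqref{Riccati1} has no $\phi$ on the right-hand side), so the core problem is the $\psi$-equation. Collecting the components, I would write $\partial_t \psi(t,u) = R(\psi(t,u))$, where the $k$-th coordinate of $R(v)$ is $\langle m_k, \bar v\rangle_{H_\bbC} + \tfrac12 \langle n_k v, \bar v\rangle_{H_\bbC}$. The admissibility condition \eqref{prop1:1a} controls the linear part $v \mapsto (\langle m_k,\bar v\rangle)_k$ (it is exactly the statement that the operator $\sum_k m_k\langle\cdot,e_k\rangle$ is bounded), while \eqref{prop1:10} controls the quadratic part, since $\sum_{i\in I}\|n_i\|^2<\infty$ bounds $\|(\langle n_k v,\bar v\rangle)_k\|$ by $(\sum_i\|n_i\|^2)^{1/2}\|v\|^2$. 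Together these give that $R$ is well-defined, maps bounded sets to bounded sets, and is locally Lipschitz on $H_\bbC$. Local existence and uniqueness of $\psi(\cdot,u)$ then follow from the standard Picard–Lindelöf theorem for ODEs on Banach spaces.

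Next I would establish that the solution takes values in the claimed domain $(H_\bbC^-)_I\oplus iH_J$, which simultaneously yields global existence (no blow-up). For the $J$-coordinates I would use \eqref{prop1:3}, \eqref{prop1:1} and the structure of $n_0$ to show that $\re \psi_j$ satisfies a linear homogeneous equation with initial value $0$, so $\re\psi_j\equiv 0$ and the $J$-part stays in $iH_J$. For the $I$-coordinates the key is to show that the set $\{v:\re v_I\le 0\}$ is invariant: on the boundary face $\re v_k=0$ (for $k\in I$) one checks, using the sign conditions \eqref{prop1:7} together with $n_{i,\{kl\}}=0$ for $k\neq l$ and $m_k\in H^+_{I\setminus\{k\}}\oplus H_{J\cup\{k\}}$ from \eqref{prop1:1}, that $\partial_t(\re\psi_k)\le 0$, so the real part cannot become positive; this is the standard ``inward-pointing vector field'' / quasimonotonicity argument adapted coordinatewise. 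Boundedness of $\re\psi_I$ below combined with the a priori control on the $J$-part prevents explosion, giving a global solution, and then $\phi$ is recovered by integrating \eqref{Riccati1}, with $\re\phi\le 0$ following from $\re\langle m_0,\bar\psi\rangle\le 0$ and $\re\langle n_0\psi,\bar\psi\rangle = \langle n_{0,JJ}\im\psi_J,\im\psi_J\rangle$-type nonnegativity checks under the admissibility structure.

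The main obstacle I expect is the invariance argument for the $I$-coordinates: unlike the finite-dimensional setting, one must verify the boundary condition uniformly in infinitely many coordinates while keeping the estimates summable, and the coupling between the $I$- and $J$-blocks through $n_{i,IJ}$ and $n_{i,JI}^*$ must be handled carefully so that it does not destroy the sign of $\partial_t(\re\psi_k)$ on the relevant face. A clean way around delicate boundary analysis is to approximate $\cU$ from the interior (perturbing initial data into the strict interior $\re v_I<0$) and pass to the limit using the continuous dependence on initial conditions guaranteed by the Lipschitz estimate; this reduces the problem to proving that the open region is positively invariant, which follows from the strict inequality version of the quasimonotonicity condition.
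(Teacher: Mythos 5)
Your plan reproduces the paper's architecture for the first two stages: the paper also recasts \eqref{Riccati2} as a semilinear evolution equation $\partial_t\psi = M^\top\psi + f(\psi)$ with $f(\xi)=\tfrac12\sum_{i\in I}\la n_i\xi,\overline{\xi}\ra_{H_\bbC}e_i$, obtains local existence, uniqueness, the blow-up alternative and Lipschitz dependence on initial data from Theorem~1 of \cite{Weissler1979} (your Picard--Lindel\"of step, with \eqref{prop1:1a} and \eqref{prop1:10} playing exactly the roles you assign them), and proves invariance of $\cU$ by first treating $u$ with $\re(u_I)<-\varepsilon$ and then passing to general $u\in\cU$ by continuous dependence. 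One caveat on the invariance step: the weak boundary condition $\partial_t(\re\psi_k)\le 0$ on the face $\re v_k=0$ does not by itself yield invariance, and even for the open region one needs a quantitative bound keeping the solution away from the boundary uniformly over the infinitely many coordinates $i\in I$. The paper supplies this by comparing $\re\psi_i$ with the scalar Riccati ODE $\partial_t g = C(g^2-2g)$ for a single constant $C^*=\sup_{i\in I}C_i<\infty$ (finiteness of which again uses \eqref{prop1:1a} and \eqref{prop1:10}); your sketch gestures at this but does not identify the uniform comparison function.

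The genuine gap is in your global-existence step. You assert that ``boundedness of $\re\psi_I$ below combined with the a priori control on the $J$-part prevents explosion.'' The invariance argument only gives $\re\psi_I\le 0$; it provides no lower bound on $\re\psi_I$ and no control whatsoever on $\im\psi_I$, so membership in $(H_\bbC^-)_I\oplus iH_J$ is fully compatible with $\|\psi_I(t,u)\|_{H_\bbC}\to\infty$ in finite time. Ruling this out is the content of Lemma~\ref{lemma 2.8} in the paper: a Gronwall-type a priori estimate on the full complex norm $\|\psi_I(t,u)\|_{H_\bbC}^2$ in terms of $\|u_I\|_{H_\bbC}^2$ and the explicitly known $\psi_J(t,u)=e^{tM_{JJ}^*}u_J$, whose proof (deferred to Appendix~\ref{sec-aux}) requires truncating $I$ to finite sets, exploiting $\re\psi_i\,n_{i,ii}|\psi_i|^2\le 0$ to discard the dangerous cubic term, summing the resulting differential inequalities, and passing to the limit via Dini's theorem. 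Only this estimate, combined with Weissler's blow-up alternative, yields $T_u=\infty$. Your proposal is missing this ingredient entirely, and without it the claimed global solution on $\R_+$ is not established.
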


These conditions directly correspond to the well-known conditions in the finite-dimensional case, see \cite{Filipovic2009}, with additional assumptions on summability of certain coefficients, \eqref{prop1:1a}, and \eqref{prop1:10}. Regarding \eqref{prop1:1}, this can be seen as follows: note that for  $i \in I$ and $j \in J$, $M_{II}e_i = \pi_I M |_{H_I} e_i = \pi_I m_i \in H_{I\setminus\{i\}}^+ \oplus H_{\{i\}}$ as well as $M_{IJ}e_j = \pi_I M |_{H_J} e_j = \pi_I m_j = 0$ because $m_j\in H_J$. This corresponds to the  condition of $\mathcal{B}_{II}$ having nonnegative off-diagonal elements and $\cB_{IJ}=0$ of Theorem 10.2 in \cite{Filipovic2009} (in the notation used there).

Condition \eqref{prop1:10} is always satisfied in the finite-dimensional case and appears here for the first time in literature.
Denote the eigenvalues of the trace-class operator $n$ by $\lambda_i, i \ge 1$. If $n$ is also symmetric and non-negative definite, then %
 \begin{align}
     \| n \| \le \sum_{i \ge 1} | \lambda_i | = \Tr n.
 \end{align}
Hence, a sufficient criterion for $\sum_{i\in I} \|n_k\|^2<\infty$ is $\sum_{i\in I} (\Tr n_i)^2<\infty$.

The proof is separated in a number of smaller results. Set $f(\xi) = \frac12 \sum_{i \in I}^\infty\la n_i \xi,\overline{\xi} \ra_{H_\bbC}\ \! e_i$. Then the Riccati equations in \eqref{Riccati2} are equivalent to  the following semilinear evolution equation  
\begin{equation}
		\begin{split}
			\partial_t \psi(t,u) & = M^\top \psi(t,u) + f(\psi(t,u)) , \quad u \in \cU,\ t \ge 0
		\end{split}
		\label{equ:ats:psi}
\end{equation}
with initial condition $\psi(0,u)  = u$. Such equations have been studied in \cite{Weissler1979} and Theorem 1 therein yields  the following result.

\begin{lemma}\label{lem:2.6}
	Assume that \eqref{prop1:1a} and \eqref{prop1:10} are satisfied. Then, for each $u \in \cU$, \eqref{equ:ats:psi} has a unique solution $\psi(t,u)$  on some interval $[0,T_u)$ with existence time $T_u \in (0,\infty].$
\end{lemma}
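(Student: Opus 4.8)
The plan is to recast the evolution equation \eqref{equ:ats:psi} in mild form and to invoke the local existence and uniqueness theorem for semilinear evolution equations, Theorem 1 in \cite{Weissler1979}, whose hypotheses are that the linear part generates a $C_0$-semigroup and that the nonlinearity is locally Lipschitz. Concretely, I would look for a continuous map $\psi(\cdot,u)$ solving the integral equation
\[
\psi(t,u) = e^{tM^\top}u + \int_0^t e^{(t-s)M^\top}f(\psi(s,u))\,ds ,
\]
produce it by a contraction argument on $C([0,T];H_\bbC)$ for $T$ small, and extend it to a maximal interval $[0,T_u)$ with $T_u\in(0,\infty]$. This holds for any initial value in $H_\bbC$, in particular for $u\in\cU$. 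Two properties must be verified: that $M^\top$ generates a $C_0$-semigroup and that $f$ is locally Lipschitz from $H_\bbC$ into $H_\bbC$.

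For the linear part I would use \eqref{prop1:1a}. Since $m_k=Me_k$, the operator $x\mapsto\sum_{k\ge1}m_k\langle x,e_k\rangle_H$ coincides with $Mx$, so \eqref{prop1:1a} says precisely that this operator is bounded, i.e. $M\in L(H)$. Its transpose $M^\top$ is then a bounded operator on the complexification $H_\bbC$ and hence generates the uniformly continuous semigroup $(e^{tM^\top})_{t\ge0}$, which is in particular a $C_0$-semigroup.

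For the nonlinearity $f(\xi)=\tfrac12\sum_{i\in I}\langle n_i\xi,\overline\xi\rangle_{H_\bbC}e_i$ I would use \eqref{prop1:10}. Because $(e_i)_{i\ge1}$ is an orthonormal basis of $H_\bbC$ and complex conjugation is an isometry, the Cauchy--Schwarz bound $|\langle n_i\xi,\overline\xi\rangle_{H_\bbC}|\le\|n_i\|\,\|\xi\|^2$ yields
\[
\|f(\xi)\|^2 = \frac14\sum_{i\in I}\big|\langle n_i\xi,\overline\xi\rangle_{H_\bbC}\big|^2 \le \frac14\Big(\sum_{i\in I}\|n_i\|^2\Big)\|\xi\|^4 ,
\]
so $f$ is well defined with values in $H_\bbC$ and has quadratic growth. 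For the Lipschitz estimate I would expand bilinearly, $\langle n_i\xi,\overline\xi\rangle-\langle n_i\eta,\overline\eta\rangle = \langle n_i(\xi-\eta),\overline\xi\rangle + \langle n_i\eta,\overline{\xi-\eta}\rangle$, which together with \eqref{prop1:10} gives
\[
\|f(\xi)-f(\eta)\| \le \frac12\Big(\sum_{i\in I}\|n_i\|^2\Big)^{1/2}\big(\|\xi\|+\|\eta\|\big)\,\|\xi-\eta\| .
\]
Thus $f$ is Lipschitz on every ball, hence locally Lipschitz, and Theorem 1 of \cite{Weissler1979} applies and delivers the unique maximal mild solution on $[0,T_u)$.

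The main obstacle lies entirely in this last step, namely in checking that the summability hypothesis \eqref{prop1:10} is exactly what renders the infinite series defining $f$ convergent in $H_\bbC$ and produces the quadratic and Lipschitz bounds uniformly in the truncation. The delicate point is that the pairing $\langle n_i\,\cdot\,,\overline{\,\cdot\,}\rangle_{H_\bbC}$ couples $\xi$ with its conjugate, so one must combine $\|\overline\xi\|=\|\xi\|$ with the $\ell^2$-bound $\sum_{i\in I}\|n_i\|^2<\infty$ in order to sum the coefficients; by contrast the linear term is harmless, since $M^\top$ is bounded.
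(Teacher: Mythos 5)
Your proposal is correct and follows essentially the same route as the paper: both verify that the quadratic nonlinearity $f$ is locally Lipschitz on balls via the bilinear expansion of $\la n_i\xi,\overline{\xi}\ra_{H_\bbC}-\la n_i\eta,\overline{\eta}\ra_{H_\bbC}$ together with the $\ell^2$-summability \eqref{prop1:10}, use \eqref{prop1:1a} to control the (bounded) linear part, and then invoke Theorem 1 of \cite{Weissler1979}. The only cosmetic difference is that the paper bounds the Lipschitz constant of the composite map $K_t=e^{tM^\top}f$ directly, whereas you treat the semigroup and the nonlinearity separately; the two are equivalent since $\|e^{tM^\top}\|\le e^{\|M\|t}$.
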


\begin{proof}
We will first show that $K_t := e^{tM^\top} f,\ t \ge 0$ is locally Lipschitz-continuous. To this end, note that for $\xi$ and $\eta$ in the domain of $f$,
\begin{align*}
	\|e^{tM^\top}f(\xi) - e^{tM^\top}f(\eta)\|_{H_\bbC}^2 &\le \frac14 e^{2\|M\| t} \sum_{i \in I} \left|\la n_i\xi,\overline{\xi} \ra_{H_\bbC} - \la n_i\eta,\overline{\eta} \ra_{H_\bbC}\right|^2 \\
		&= \frac14 e^{2\|M\| t} \sum_{i \in I} \left|\la n_i\xi,\overline{\xi} \ra_{H_\bbC} - \la n_i\eta,\overline{\xi} \ra_{H_\bbC} + \la n_i\eta,\overline{\xi} \ra_{H_\bbC} - \la n_i\eta,\overline{\eta} \ra_{H_\bbC}\right|^2 \\
		&= \frac14 e^{2\|M\| t} \sum_{i \in I} |\la n_i(\xi+\eta),\overline{(\xi-\eta)} \ra_{H_\bbC}|^2 \\
		&\le \frac14 e^{2\|M\| t} \sum_{i \in I} \| n_i\|^2\|\xi+\eta\|_{H_\bbC}^2\|\xi-\eta\|_{H_\bbC}^2. 
\end{align*}

Hence,  for each $t \ge 0$, $K_t$ is locally Lipschitz-continuous by \eqref{prop1:10} and its  Lipschitz constant on  $U_\alpha=\{x \in H:\parallel x \parallel \le \alpha\}$, $\alpha >0$ is bounded by 
$ \alpha e^{\parallel M \parallel t}  ( \sum_{i \in I} \| n_i \|^2)^{\nicefrac 12}. $ 

Theorem 1 in \cite{Weissler1979} now yields that, for each $u \in \cU$, equation \eqref{equ:ats:psi} possesses a unique solution $\psi(t,u)$  on some interval $[0,T_u)$ with $0<T_u\le \infty$ and the proof is finished. \end{proof}
\begin{lemma}
	Assume that the admissibility conditions \eqref{prop1:1a} - \eqref{prop1:10} are satisfied. Then, for all $t\in[0,T_u)$ and $u\in\cU$ it holds that the unique solution of \eqref{equ:ats:psi}, $\psi(t,u)$, satisfies that $\psi(t,u) \in \cU$.
\end{lemma}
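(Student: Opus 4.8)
The plan is to show $\psi(t,u) \in \cU = (H_\bbC^-)_I \oplus iH_J$; by \eqref{Usep} this is equivalent to two statements about $a := \re\psi(t,u)$ and $b := \im\psi(t,u)$, namely $\langle a, e_j\rangle = 0$ for every $j \in J$ and $\langle a, e_i\rangle \le 0$ for every $i \in I$. Since \eqref{prop1:1a} says $M \in L(H)$, the generator $M^\top$ in \eqref{equ:ats:psi} is bounded, so the solution from Lemma \ref{lem:2.6} is a classical $C^1$ curve satisfying \eqref{Riccati2} pointwise. Taking real parts in \eqref{Riccati2}, a short computation with $\langle\cdot,\cdot\rangle_{H_\bbC}$ gives, for every $k$,
\[ \partial_t\langle a, e_k\rangle = \langle m_k, a\rangle + \tfrac12\big(\langle n_k a, a\rangle - \langle n_k b, b\rangle\big), \]
which is the common starting point for both statements.

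For the coordinates in $J$ I would specialise this identity to $k = j \in J$. By \eqref{prop1:3} we have $n_j = 0$, and by \eqref{prop1:1} we have $m_j \in H_J$, so $\partial_t\langle a, e_j\rangle = \langle m_j, \pi_J a\rangle$ and the family $(\langle a, e_j\rangle)_{j\in J}$ solves the closed homogeneous linear equation $\partial_t(\pi_J a) = \pi_J M^\top\pi_J(\pi_J a)$ in $H_J$, with bounded generator and, since $u \in iH_J$ on the $J$-block, zero initial value. Uniqueness for linear ODEs with bounded generator then forces $\pi_J a \equiv 0$, i.e. $\langle a, e_j\rangle = 0$ for all $j \in J$ and all $t \in [0,T_u)$.

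Now I would turn to $I$, where $\pi_J a \equiv 0$ simplifies the identity considerably. By \eqref{prop1:7} the $II$-block of $n_i$ is supported on the single entry $(i,i)$, so $\langle n_i a, a\rangle = c_i\langle a, e_i\rangle^2$ with $c_i := \langle n_i e_i, e_i\rangle \ge 0$; and by \eqref{prop1:1}, $m_i$ has non-negative components on $I\setminus\{i\}$, so $\langle m_i, a\rangle = \beta_i\langle a, e_i\rangle + \sum_{k\in I\setminus\{i\}}\gamma_{ik}\langle a, e_k\rangle$ with $\beta_i := \langle m_i, e_i\rangle$ and $\gamma_{ik} := \langle m_i, e_k\rangle \ge 0$. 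A coordinatewise ``first-exit'' argument is awkward here because $I$ may be infinite, so instead I would use the Lyapunov functional
\[ \phi(t) := \|(\pi_I a)^+\|^2 = \sum_{i\in I}\big(\langle a, e_i\rangle^+\big)^2, \qquad (\pi_I a)^+ := \sum_{i\in I}\langle a, e_i\rangle^+ e_i, \]
which is of class $C^1$ along the solution (the map $x \mapsto \sum_{i\in I}(\langle x, e_i\rangle^+)^2$ is Fr\'echet differentiable on $H$ with continuous gradient $2(\pi_I x)^+$) and satisfies $\phi(0) = 0$ because $\pi_I\re u \le 0$. Differentiating gives $\partial_t\phi = 2\langle(\pi_I a)^+, \partial_t(\pi_I a)\rangle$, and on any compact $[0,T]\subset[0,T_u)$, where $\|a(t)\| \le R_T$, the three resulting groups of terms are controlled as follows: the contributions $-\langle n_i b, b\rangle$ are $\le 0$ since $n_i \ge 0$; the cubic term $\sum_i c_i\langle a, e_i\rangle^+\langle a, e_i\rangle^2$ is bounded by $R_T(\sup_i\|n_i\|)\,\phi$, finite by \eqref{prop1:10}; and the drift term, after using $s^+ s = (s^+)^2$ on the diagonal and $\langle a, e_k\rangle \le \langle a, e_k\rangle^+$ with $\gamma_{ik}\ge 0$ on the off-diagonal part, is dominated by a multiple of $\langle(\pi_I a)^+, \pi_I M^\top\pi_I(\pi_I a)^+\rangle \le \|M\|\,\phi$. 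Hence $\partial_t\phi \le C_T\phi$ with $\phi(0) = 0$, and Gronwall's lemma yields $\phi \equiv 0$ on $[0,T]$; as $T < T_u$ is arbitrary, $\langle a, e_i\rangle \le 0$ for all $i \in I$ on $[0,T_u)$, completing the proof that $\psi(t,u) \in \cU$.

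The main obstacle is precisely this $I$-part invariance in infinite dimensions: the quadratic nonlinearity together with the possibly non-summable off-diagonal drift coefficients rules out a naive coordinatewise comparison, and the Lyapunov--Gronwall estimate is what makes the bound uniform over the infinitely many coordinates. It succeeds only because the admissibility conditions neutralise every dangerous term---the quadratic term vanishes on $\{\langle a, e_i\rangle = 0\}$ since $n_{i,II}$ is supported on $(i,i)$ by \eqref{prop1:7}, the imaginary part enters with the correct sign because $n_i \ge 0$, the off-diagonal drift retains a favourable sign because $m_i \in H_{I\setminus\{i\}}^+ \oplus H_{J\cup\{i\}}$ by \eqref{prop1:1}, and the domination by $\phi$ rests on $M \in L(H)$ from \eqref{prop1:1a} and $\sum_{i\in I}\|n_i\|^2 < \infty$ from \eqref{prop1:10}. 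The remaining technical points---the $C^1$-regularity of $\phi$ and differentiation under the sum---are routine given that $M^\top$ is bounded (so \eqref{equ:ats:psi} is a genuine Banach-space ODE with $C^1$ solutions) and that $s \mapsto (s^+)^2$ is continuously differentiable.
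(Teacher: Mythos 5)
Your proof is correct, and while the $J$-part coincides with the paper's (both reduce $\re\psi_J$ to a homogeneous linear ODE with zero initial datum), the $I$-part takes a genuinely different route. The paper first assumes $\re(u_i)<-\varepsilon$ for all $i\in I$, introduces the first-exit time $T_u'$ of \eqref{def:Tustrich}, compares each coordinate $\re\psi_i$ with the scalar Riccati ODE $g'=C(g^2-2g)$ -- uniformly over $i$ via $C^*=\sup_i C_i<\infty$ -- to conclude $T_u'=\infty$, and then passes to general $u\in\cU$ by continuous dependence on initial data (part (v) of Theorem 1 in \cite{Weissler1979}). You instead run a single Gronwall argument on the Lyapunov functional $\phi(t)=\|(\re\psi_I(t,u))^+\|^2$, which handles all coordinates of $I$ simultaneously and starts directly from an arbitrary $u\in\cU$, so you avoid both the $\varepsilon$-interior approximation and the exit-time bookkeeping over infinitely many coordinates; the price is the (routine) verification that $x\mapsto\|(\pi_I x)^+\|^2$ is Fr\'echet differentiable with Lipschitz gradient $2(\pi_I x)^+$, and the mild loss that, unlike the paper's comparison bound $g(t,-\varepsilon,C^*)<0$, your argument gives no quantitative strictly negative bound for interior data. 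Your estimates use the admissibility conditions in exactly the same roles as the paper: \eqref{prop1:7} kills the off-diagonal quadratic terms, $n_i\ge0$ handles the imaginary part, \eqref{prop1:1} gives the off-diagonal drift its favourable sign, and $\|M\|<\infty$ together with $\sup_{i\in I}\|n_i\|<\infty$ (from \eqref{prop1:1a} and \eqref{prop1:10}) makes the Gronwall constant finite on compacts of $[0,T_u)$. One shared cosmetic point: like the paper's own proof, you need condition \eqref{prop1:1} on the $m_i$, which the lemma's hypothesis list formally omits by starting at \eqref{prop1:1a}.
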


\begin{proof} 
To begin with, we note that by Lemma \ref{lem:2.6}, \eqref{equ:ats:psi}, has, for $u \in \cU$ a unique solution $\psi(t,u)$ on $[0,T_u)$.
To show that $\psi(t,u) \in \cU$, we utilize \eqref{Usep}, hence, have to show that $\re(\psi_J(t,u))=0$ and $\re(\psi_I(t,u)) \le 0$.

First, for $j \in J$, we obtain by \eqref{prop1:3}, that the projection $\psi_J(t,u) = \pi_J \psi(t,u)$ satisfies the autonomous equation
\begin{align*}
	\psi_J(t,u) &= M_{JJ}^* \psi_J(t,u), \quad t\ge0, \ u\in\cU,
\end{align*}
with $\psi_J(0,u) =  u_J$.  The unique solution of this equation is given by 
$$
\psi_J(t,u) = e^{tM_{JJ}^*} \, u_J,\quad t\ge0.
$$
From \eqref{Usep} it follows $\re (u_J)=0$ and hence $\re(\psi_J(t,u))=0$. 

As a second step we show that $\re(\psi_I(t,u)) \le 0$ which requires more work.
We start with the observation that, for $i \in I$, 
\begin{align}
 \partial_t\re(\psi_i(t,u)) 
		&= \langle m_i,\re (\psi(t,u))\rangle \nonumber\\
        &+ \frac12\langle n_i\re(\psi(t,u)), \re(\psi(t,u))\rangle 
		- \frac12\langle n_i\im(\psi(t,u)), \im(\psi(t,u))\rangle \nonumber\\
        & \le \langle m_i,\re (\psi(t,u))\rangle + \frac12 n_{i,\{ii\}}(\re(\psi_i(t,u)))^2,
        \label{temp477}
\end{align}
using \eqref{prop1:3} and \eqref{prop1:7}. 

Next, consider $\varepsilon>0$ and $u \in \cU$  such that %
$\re (u_i) < -\varepsilon$ for all $i \in I$. Let 
\begin{align}\label{def:Tustrich}
   T'_u =\inf\{t\in[0,T_u) : \exists i \in I \text{ s.t. }\re (\psi_i(t,u)) \ge  0\}
\end{align}
with the convention that $\inf \emptyset=\infty$.
Since $\re(\psi_i(t,u))$ is continuous at $t=0$ and $\re(\psi_i(0,u))$ bounded away from zero, $T'_u>0$. 
Hence,  for $t\in[0,T'_u)$, it follows that $\re(\psi_{I'}(t,u))<0$ for any subset $I'\subset I$. 

By Assumption \eqref{prop1:1}, $m_i \in H_{I \backslash\{i\}}^+ \oplus H_{J \cup \{i\} }$. Hence, there exist $m_i'\in H_{I \backslash\{i\}}^+$ and $m_i'' \in H_{J}$ such that $m_i=m_{i}'+m_{i,i}+m_{i}''$. Then $\langle m_i',\re (\psi_{I\setminus\{i\}}(t,u))\rangle \le 0$, $\langle m_i'',\re (\psi_J(t,u))\rangle=0$ and therefore
\begin{align*}
	\langle m_i,\re (\psi(t,u))\rangle &\le \langle m_{i,i},\re (\psi_i(t,u))\rangle.
\end{align*}
Together with \eqref{temp477} and $C_i = \frac12 \max\{n_{i,\{ii\}}, |m_{i,i}|\}$, we are able to achieve the following estimate, 
\begin{align*}
	\partial_t \re(\psi_i(t,u))
	&\le \langle m_{i,i},\re (\psi_i(t,u))\rangle + \frac12 n_{i,\{ii\}}(\re(\psi_i(t,u)))^2  \\
	&\le C_i\big((\re (\psi_i(t,u)))^2 - 2\re (\psi_i(t,u))\big),
\end{align*}
where we used $\re (\psi_i(t,u))<0$. 
By the comparison theorem,  \cite{BirkhoffRota1989}(Chapter 1, Theorem 7), 
$\re(\psi_i(t,u))\le g(t,u_i,C_i)$ for all $t\in[0,T'_u)$, where $g(t,u,C)=:g(t)$ solves
\begin{align*}
	\partial_tg(t) &= C(g(t)^2 - 2g(t)), \\
	g(0) &= \re(u),
\end{align*}
with $C \ge 0$ and $\re(u)< 0$.
The unique solution of this Riccati equation is given by $g(t) = 2u (2e^{2C t} - u(e^{2Ct}-1))^{-1}$. The function $g(\cdot,u,C)$ stays negative on the whole real line when $\re(u)<0$. Moreover, $g$ is increasing in $u$ and $C$ such that we obtain that  
$$ 
   \re(\psi_I(t,u)) \le \sup_{i \in I} g(t,u_i,C_i) 
   \le  g(t,-\varepsilon,C^*)<0
$$ 
for  $t\in[0,T'_u)$ where 
$C^* = \sup_{i \in I} C_i \le \frac12 \sup_{i\in I}(\|n_i\| + \|m_i\|) \le \frac12 ((\sum_{i \in I} \|n_i\|^2)^{\nicefrac12} + \|M\|) < \infty$ by \eqref{prop1:1a} and \eqref{prop1:10}. Using continuity of $\re(\psi_I(t,u))$, we obtain that at $t=T_u'$, $\re(\psi_I(t,u))<0$, if $T_u'<\infty$.  By the very definition of $T_u'$ in \eqref{def:Tustrich}, this implies that $T_u'=\infty$. 

Summarizing, we obtained up to now that  for $u \in \cU$ with $\re(u_I)<-\varepsilon$ it follows that $\re(\psi_I(t,u))\le 0$ for all $t \in [0,T_u)$.
The next step is to extend this result to all $u \in \cU$.

In this regard, consider arbitrary $u \in \cU$, a sequence $(\varepsilon_n)\downarrow 0$ and a sequence $u_n \to u$ satisfying 
   $\re (u_n) < -\varepsilon_n$ for all $n \ge 1$. 
By part (v) of Theorem 1 in  \cite{Weissler1979}, $\psi(t,u)$ is Lipschitz continuous on some neighborhood of $u$, uniformly on each compact interval $[0,T]$, $T<T_u$. Therefore, $\psi(t,u_n) \to \psi(t,u)$ for each $t \in [0,T_u)$. Hence, 
   $$\re (\psi_I(t,u)) = \lim_{n \to \infty} \re (\psi_I(t,u_n))\le 0, \qquad t\in[0,T_u) $$ 
and the claim is proved.
\end{proof}

\begin{lemma}\label{lemma 2.8}
Assume that the admissibility conditions \eqref{prop1:1a} - \eqref{prop1:10} are satisfied. Then, for all $t\in[0,T_u)$ and $u\in\cU$ it holds that the unique solution of \eqref{equ:ats:psi}, $\psi(t,u)$, satisfies the following inequality
	\begin{align}
	\label{gronwall estimate psi}
	\|\psi_I(t,u)\|_{H_\bbC}^2 &\le \|u_I\|_{H_\bbC}^2 + C(1+\|u_I\|_{H_\bbC}^2)\int_0^t h_u(s) e^{C\int_s^t h_u(r) d r} d s,
\end{align}
where $h_u(t)=\big(1+\|\psi_J(t,u)\|_{H_\bbC}^2+\|\psi_J(t,u)\|_{H_\bbC}^4\big)$ with $\psi_J(t,u) = e^{tM_{JJ}^*} \, u_J$ and $C=\sum_{i\in I} \|A_i\|^2 + \|M\|^2 + \frac72$.
\end{lemma}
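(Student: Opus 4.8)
The plan is to establish a linear differential inequality for $y(t):=\|\psi_I(t,u)\|_{H_\bbC}^2$ and then close it with Gr\"onwall's lemma, exploiting that $\psi_J(t,u)=e^{tM_{JJ}^*}u_J$ is explicitly known so that $h_u$ is a prescribed function of $t$. Since $\psi(\cdot,u)$ solves \eqref{equ:ats:psi}, its $i$-th coordinate satisfies $\partial_t\psi_i=\langle m_i,\overline{\psi}\rangle_{H_\bbC}+\tfrac12\langle n_i\psi,\overline{\psi}\rangle_{H_\bbC}$, and differentiating $\|\psi_I\|_{H_\bbC}^2=\sum_{i\in I}|\psi_i|^2$ coordinatewise gives
\begin{align*}
\tfrac{d}{dt}\|\psi_I(t,u)\|_{H_\bbC}^2 = 2\,\re\sum_{i\in I}\langle m_i,\overline{\psi}\rangle_{H_\bbC}\,\overline{\psi_i}+\re\sum_{i\in I}\langle n_i\psi,\overline{\psi}\rangle_{H_\bbC}\,\overline{\psi_i}.
\end{align*}
For the linear part I would use that, by the very definition of $M^\top$ in \eqref{equ:ats:psi}, $\langle m_i,\overline{\psi}\rangle_{H_\bbC}=(M^\top\psi)_i$ for $i\in I$, whence $\sum_{i\in I}\langle m_i,\overline{\psi}\rangle_{H_\bbC}\overline{\psi_i}=\langle (M^\top)_{II}\psi_I+(M^\top)_{IJ}\psi_J,\psi_I\rangle_{H_\bbC}$; Cauchy--Schwarz and Young then bound this by a multiple of $\|\psi_I\|_{H_\bbC}^2+\|M\|^2\|\psi_J\|_{H_\bbC}^2$, already of the desired form.

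The crucial step is the nonlinear term. Decomposing $\psi=\psi_I+\psi_J$ and expanding $\langle n_i\psi,\overline{\psi}\rangle_{H_\bbC}$ into its four blocks, condition \eqref{prop1:7} forces $(n_i)_{II}$ to be the rank-one operator $n_{i,\{ii\}}\,e_i\langle e_i,\cdot\rangle_{H_\bbC}$, so the pure $I$-block contributes $\sum_{i\in I}n_{i,\{ii\}}\,\psi_i^2\,\overline{\psi_i}$, whose real part equals $\sum_{i\in I}n_{i,\{ii\}}\,|\psi_i|^2\,\re(\psi_i)$. Since $n_{i,\{ii\}}\ge 0$ and $\re(\psi_i)\le 0$ for $i\in I$ by the preceding lemma, this quantity is $\le 0$ and may simply be discarded. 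I expect this sign argument to be the heart of the proof: it is exactly the otherwise uncontrollable cubic-in-$\psi_I$ contribution, and without the sign a linear Gr\"onwall estimate could not be closed.

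It remains to estimate the three blocks that involve $\psi_J$. Using $|\langle n_i a,\overline{b}\rangle_{H_\bbC}|\le\|n_i\|\,\|a\|_{H_\bbC}\|b\|_{H_\bbC}$, Cauchy--Schwarz in the index $i$ together with \eqref{prop1:10} (so that $\sum_{i\in I}\|n_i\|^2<\infty$), and Young's inequality, the two cross blocks are bounded by a multiple of $(1+\|\psi_J\|_{H_\bbC}^2)\|\psi_I\|_{H_\bbC}^2$, while the $JJ$-block is bounded by a multiple of $\|\psi_J\|_{H_\bbC}^4+\|\psi_I\|_{H_\bbC}^2$; the quartic term is precisely what produces the $\|\psi_J\|_{H_\bbC}^4$ summand in $h_u$. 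Collecting all contributions, recalling \eqref{prop1:1a}, and absorbing every constant into $C=\sum_{i\in I}\|n_i\|^2+\|M\|^2+\tfrac72$, one obtains
\begin{align*}
\tfrac{d}{dt}\|\psi_I(t,u)\|_{H_\bbC}^2 \le C\,h_u(t)\big(1+\|\psi_I(t,u)\|_{H_\bbC}^2\big),\qquad t\in[0,T_u).
\end{align*}
Finally, applying Gr\"onwall's lemma to $t\mapsto 1+\|\psi_I(t,u)\|_{H_\bbC}^2$ gives $1+\|\psi_I(t,u)\|_{H_\bbC}^2\le(1+\|u_I\|_{H_\bbC}^2)\exp(C\int_0^t h_u(r)\,dr)$, and rewriting this bound in variation-of-constants (integral) form yields exactly \eqref{gronwall estimate psi}. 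Beyond these estimates, the only points needing care are the justification of the coordinatewise differentiation of the infinite series (valid since $\psi(\cdot,u)$ is a differentiable $H_\bbC$-valued solution) and the routine bookkeeping of the constants.
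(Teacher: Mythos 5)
Your proposal is correct and follows essentially the same route as the paper: the same block decomposition of the quadratic term, the same crucial observation that the diagonal $I$-block contributes $\sum_{i\in I} n_{i,\{ii\}}|\psi_i|^2\re(\psi_i)\le 0$ and can be discarded, the same Young/Cauchy--Schwarz bounds yielding the constant $C=\sum_{i\in I}\|n_i\|^2+\|M\|^2+\tfrac72$, and the same Gr\"onwall closure. The only difference is technical: the paper first truncates to finite index sets $I_n$ and recovers the infinite-dimensional estimate by a monotone limit via Dini's theorem, whereas you work directly with the full series, which is legitimate since $\frac{d}{dt}\|\pi_I\psi\|_{H_\bbC}^2=2\re\langle \pi_I D_t\psi,\pi_I\psi\rangle_{H_\bbC}$ follows from the Fr\'echet differentiability of $\psi(\cdot,u)$ and all the resulting sums converge by \eqref{prop1:1a} and \eqref{prop1:10}.
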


The proof of this lemma is relegated to the appendix.
Finally, we show that the unique solution exists on the whole real line, thus completing the proof of Proposition \ref{prop2.5}.

\begin{proof}[Proof of Proposition \ref{prop2.5}]
First, we show that $T_u=\infty$. The proof bases on result (iv) of Theorem 1 in \cite{Weissler1979}, saying that  $\lim_{t\to T_u}\|\psi_I(t,u)\|_{H_\bbC} = \infty$ if $T_u<\infty$. 
In this regard, note that the right hand side of Equation \eqref{gronwall estimate psi} is finite for all $t \ge 0$. Hence,  the existence time $T_u$ of $\psi_I(t,u)$ for $u\in\cU$ must be infinite, i.e.\ $T_u=\infty$. 

This shows existence and uniqueness regarding $\psi$. Existence and uniqueness for $\phi$ directly follow by integration.
At last, we  show $\re\phi(t,u) \le 0$ for all $t\ge0$ and $u\in\cU$: integrate the real part of \eqref{Riccati1} and consider the admissibility conditions to get
\begin{align*}
	& \re\phi(t,u) \\
	&= \int_0^t \langle m_0,\re\psi(s,u) \rangle + \frac12 \langle n_0 \re\psi(s,u),\re\psi(s,u)\rangle - \frac12 \langle n_0 \im\psi(s,u),\im\psi(s,u)\rangle d s \\
	&= \int_0^t \langle m_{0,I},\re\psi_I(s,u) \rangle - \frac12 \langle n_{0,JJ} \im\psi_J(s,u),\im\psi_J(s,u)\rangle d s \le 0.
\end{align*}
for all $t\ge0$ and $u\in\cU$. 
\end{proof}

\begin{proposition}
Assume that $X$ is a strong solution of \eqref{SDE:X}, $\mu$ and $S$ are affine  as in \eqref{affcoeffe},
and that the Riccati system \eqref{Riccati1}-\eqref{Riccati2} has a solution $(\phi(t,u),\psi(t,u)) \in \bbC_- \times \cU$ for all $t\ge 0, u\in \cU$ and $x \in \cX$. Then the admissibility conditions in Proposition 3.5 hold.
\end{proposition}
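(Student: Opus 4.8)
The plan is to harvest the coarse admissibility conditions from the results already proved, and then to read off the remaining, geometric conditions from the invariance of the cone $\cU$ under the Riccati flow. First I would note that the standing hypothesis $(\phi(t,u),\psi(t,u))\in\bbC_-\times\cU$ already delivers the sign estimate needed to invoke Proposition~\ref{prop-2-4}: for $x\in\cX=H_I^+\oplus H_J$ we have $\re\langle\psi(t,u),x\rangle_{H_\bbC}=\langle\re\psi_I(t,u),x_I\rangle\le 0$, using $\re\psi_J=0$, $\re\psi_I\le 0$ and $x_I\ge 0$ (see \eqref{Usep}); combined with $\re\phi\le 0$ this yields $\re(\phi(t,u)+\langle\psi(t,u),x\rangle_{H_\bbC})\le 0$, which is exactly the sign condition under which Proposition~\ref{prop-2-4} shows that $X$ is affine. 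Theorem~\ref{thm:affine1} then applies and furnishes \eqref{affcoeffe}, the symmetry, non-negativity and trace-class property of $n_k$ for $k\ge 1$, the vanishing $n_j=0$ for $j\in J$, and \eqref{condN}. Since $\|n_i\|\le\Tr n_i$ for symmetric non-negative trace-class operators, \eqref{condN} gives \eqref{prop1:10}; since $M\in L(H)$ and $\sum_{k\ge 1}m_k\langle\,\cdot\,,e_k\rangle=M(\cdot)$, condition \eqref{prop1:1a} follows; and $n_0=S(0)$ is symmetric, non-negative definite and trace class by Lemma~\ref{lem:alphax}, as $0\in\cX$. It thus remains to pin down the fine structure of $m_i$, $m_j$, $n_i$ and $n_0$.

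For these I would analyse the Riccati vector field on the boundary of $\cU$. Writing $\psi=p+iq$ with $p=\re\psi$ and $q=\im\psi$, symmetry of $n_k$ gives $\re\langle n_k\psi,\overline{\psi}\rangle_{H_\bbC}=\langle n_k p,p\rangle-\langle n_k q,q\rangle$, so the real part of \eqref{Riccati2} reads $\partial_t\re\psi_i=\langle m_i,p\rangle+\frac12\langle n_i p,p\rangle-\frac12\langle n_i q,q\rangle$. Fix $i\in I$ and $l\in I\setminus\{i\}$, and probe with the real initial datum $u=-\lambda e_l\in\cU$, $\lambda>0$, which lies on the face $\re u_i=0$. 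As $\psi(\cdot,u)$ stays in $\cU$, the map $t\mapsto\re\psi_i(t,u)$ is non-positive on $[0,T_u)$ and vanishes at $t=0$, so its right derivative there is non-positive; evaluating the vector field at $(p,q)=(-\lambda e_l,0)$ this reads $-\lambda\langle m_i,e_l\rangle+\frac{\lambda^2}{2}\langle n_i e_l,e_l\rangle\le 0$ for all $\lambda>0$. Dividing by $\lambda$ and letting $\lambda\downarrow 0$ gives $\langle m_i,e_l\rangle\ge 0$; dividing by $\lambda^2$ and letting $\lambda\to\infty$ gives $\langle n_i e_l,e_l\rangle\le 0$, hence $n_i e_l=0$ since $n_i\ge 0$. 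Letting $l$ range over $I\setminus\{i\}$, the first family of inequalities is precisely $m_i\in H_{I\setminus\{i\}}^+\oplus H_{J\cup\{i\}}$, i.e. \eqref{prop1:1} for $i\in I$, while the relations $n_i e_l=0$ together with the symmetry of $n_i$ yield $n_{i,\{kl\}}=0$ whenever $\{k,l\}\neq\{i\}$ with $k,l\in I$, and $n_{i,\{ii\}}\ge 0$ from $n_i\ge 0$; this is \eqref{prop1:7}. The remaining claims $n_{i,IJ}=n_{i,JI}^*$, and the symmetry, non-negativity and trace-class property of $n_{i,JJ}$, are automatic from the corresponding properties of $n_i$.

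The conditions on $m_j$ and $n_0$ follow in the same manner. For $j\in J$, invariance forces $\re\psi_j\equiv 0$, and as $n_j=0$ the real part of \eqref{Riccati2} reduces to $\partial_t\re\psi_j=\langle m_j,p\rangle$; evaluating at $t=0$ gives $\langle m_j,\re u\rangle=0$ for every $u\in\cU$, and testing $u=-e_i$, $i\in I$, yields $\pi_I m_j=0$, i.e. $m_j\in H_J$. For $n_0$ I would use $\phi$ in place of $\psi$: the real part of \eqref{Riccati1} is $\partial_t\re\phi=\langle m_0,p\rangle+\frac12\langle n_0 p,p\rangle-\frac12\langle n_0 q,q\rangle$, and since $\re\phi\le 0$ with $\phi(0,u)=0$, its right derivative at $t=0$ is non-positive. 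Probing with $u=-\lambda e_i$, $i\in I$, and separating the orders $\lambda$ and $\lambda^2$ as above gives $\langle m_0,e_i\rangle\ge 0$ (so $m_0\in\cX$) and $\langle n_0 e_i,e_i\rangle\le 0$, whence $n_0 e_i=0$; over all $i\in I$ this is $n_{0,II}=0$ and $n_{0,JI}=0$, hence $n_{0,IJ}=n_{0,JI}^*=0$ by symmetry, while $n_{0,JJ}$ inherits symmetry, non-negativity and the trace-class property from $n_0=S(0)$. This exhausts \eqref{prop1:1}--\eqref{prop1:10}.

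The step I expect to be the main obstacle is not any individual computation but the rigorous passage from the cone invariance $\psi(\cdot,u)\in\cU$ and $\phi\in\bbC_-$ to the one-sided differential inequalities at $t=0$. This relies on the $C^1$-regularity of $t\mapsto\psi(t,u)$ provided by \cite{Weissler1979}, on the elementary fact that a differentiable function that is non-positive on $[0,T_u)$ and vanishes at $t=0$ has non-positive right derivative there, and, crucially, on $\cU$ being a cone so that the scaled probes $-\lambda e_l$ remain admissible. Once this is in place, the decisive device is the separation of the drift and diffusion contributions through the two scaling limits $\lambda\downarrow 0$ and $\lambda\to\infty$, after which the non-negativity of $n_0$ and $n_i$ upgrades each sign condition $\langle n e_l,e_l\rangle\le 0$ into the exact degeneracy $n e_l=0$.
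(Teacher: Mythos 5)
Your proof is correct and follows essentially the same route as the paper's: evaluate the real parts of the Riccati equations at $t=0$, use the cone invariance $(\phi,\psi)\in\bbC_-\times\cU$ to obtain one-sided differential inequalities on the boundary faces of $\cU$, and separate the linear and quadratic contributions by scaling the probe (the paper tests with a general strictly negative $v_{I\setminus\{i\}}$ and a single limit $\gamma\to\infty$ via a contradiction, whereas you test with single coordinates $-\lambda e_l$ and the two limits $\lambda\downarrow 0$ and $\lambda\to\infty$ --- a cosmetic difference). You are in fact more explicit than the paper's own proof about the coarse conditions \eqref{prop1:1a}, \eqref{prop1:2} and \eqref{prop1:10}, which the paper leaves implicit via Proposition \ref{prop-2-4} and Theorem \ref{thm:affine1}.
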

\begin{proof}
	First of all, by Lemma \ref{lem:alphax}, $S(x)$ is a symmetric, non-negative definite and trace-class operator for all $x \in \cX$. This gives that $n_j=N e_j = 0$ for all $j \in J$. Moreover, from the Riccati equations \eqref{Riccati1}-\eqref{Riccati2} we obtain that	\begin{align}
		\partial_t \re\phi(0,u) &= \langle m_0, v \rangle + \frac12 \langle n_0 v, v  \rangle - \frac12 \langle n_0 w, w \rangle, \label{ReRic1} \\
		\partial_t \re\psi_i(0,u) &= \langle m_i, v \rangle + \frac12 \langle n_i v, v  \rangle - \frac12 \langle n_i w, w \rangle, \label{ReRic2I} \\
		\partial_t \re\psi_j(0,u) &= \langle m_j, v \rangle, \label{ReRic2J}
	\end{align}
	where ${u \in \cU,\ } $ and we set $ v=\re u, w=\im u$. 
	From \eqref{Usep}, together with \eqref{Riccati2}, we obtain from \eqref{ReRic2J} that $\re \psi_j(\cdot,u) \equiv 0$ for all $j\in J$. This implies  that {$\langle m_j, v\rangle = 0$. Again from \eqref{Usep}  we obtain that $v_J = 0$ while $v_I \le 0$. Hence,  $m_j\in H_J$} for all $j\in J$. 
	
Next, we consider \eqref{ReRic2I}. As already noted in  Remark \ref{rem2.1}, $\re \psi_i(\cdot,u) \le 0$ for all $i\in I$, such that $\partial_t \re \psi_i(0,u) \le 0$ whenever $\re \psi_i(0,u)= \re u_i = v_i =  0$.
	Choose $u=v+iw$ such that $v_{I\setminus\{i\}} < 0$, {$v_{J\cup\{i\}} = 0$} and $w=0$. Substituting such $u$'s into \eqref{ReRic2I} leads to
	\begin{equation}
		0 \ge \partial_t \re \psi_i(0,u) = \langle m_i, v_{I\setminus\{i\}}\rangle + \frac12 \langle n_i v_{I\setminus\{i\}}, v_{I\setminus\{i\}} \rangle.
		\label{temp333}
	\end{equation}
	This implies that $ \langle n_i v_{I\setminus\{i\}}, v_{I\setminus\{i\}} \rangle = 0$: indeed,  if $ \langle n_i v_{I\setminus\{i\}}, v_{I\setminus\{i\}} \rangle \neq 0$, there would exist a $v_{I\setminus\{i\}},$ such that  
	$$
	\langle n_i v_{I\setminus\{i\}}, v_{I\setminus\{i\}} \rangle > 0,
	$$
	and for $\gamma > 0$ large enough, \eqref{temp333} would lead to a contradiction that
	$$
	0 \ge \partial_t \re \psi_i(0,\gamma u) = \langle m_i, v_{I\setminus\{i\}}\rangle \gamma + \frac12 \langle n_i v_{I\setminus\{i\}}, v_{I\setminus\{i\}} \rangle \gamma^2 > 0.
	$$
	Now that we have shown $n_{i,I\setminus\{i\}I\setminus\{i\}} = 0$, it follows from the non-negative definiteness of $n_i$ that
	$$
	n_{i,\{kl\}} = \begin{cases} \ge 0 & \text{ if }i=k=l, \\ =0 & \text{ otherwise}, \end{cases} \quad \text{ for }i,k,l \in I.
	$$
	The rest conditions on $n_i$, such as $n_{i,IJ} = n_{i,JI}^*$ and
	$n_{i,JJ}$ is symmetric, non-negative definite and of trace class, can be easily seen from its non-negative definiteness as well. Furthermore, because of $n_{i,I\setminus\{i\}I\setminus\{i\}} = 0$, \eqref{temp333} gives
	$$
	\langle m_i, v_{I\setminus\{i\}} \rangle \le 0.
	$$
	Then we conclude that $m_i \in H_{I \backslash\{i\}}^+ \oplus H_{J \cup \{i\} }$, since $v_{I\setminus\{i\}}$ is chosen to be arbitrarily negative. Finally, we look at \eqref{ReRic1}. Since $\re \phi(\cdot,u) \le 0$ and $\re \phi(0,u)=0$, we may employ the same reason as for  \eqref{ReRic2I} to detect $\partial_t \re \phi_i(0,u) \le 0$ for all $u \in \cU$. Especially, we choose $u=v+iw$ with $v_I < 0$, $v_J = 0$ and $w=0$ and get
	\begin{equation}
		0 \ge \partial_t \re \phi_i(0,u) = \langle m_0, v_I\rangle + \frac12 \langle n_0 v_I, v_I\rangle.
		\label{temp444}
	\end{equation}
	An analogous argument applied to \eqref{temp333} shows that $n_{0,II} = 0$. Besides, the affine form condition tells that $n_0$ is a symmetric, non-negative definite, trace-class operator, which implies that $n_{0,JJ}$ must be such one as well and $n_{0,IJ} = n_{0,JI}^*=0$ due to $n_{0,II} = 0$. Moreover, such an $n_0$ turns \eqref{temp444} to
	$$
	\langle m_0, v_I\rangle = \partial_t \re \phi_i(0,u) \le 0.
	$$
	Then $m_0$ must be an element in $\cX$, because $v_I < 0$ is arbitrary.
\end{proof}

\begin{remark}
	Consider the canonical state space $\cX$ and assume $X$ to be a strong solution of \eqref{SDE:X}. Then the affinity property of $X$ is equivalent to the admissibility conditions. The sufficiency is deduced by Theorem \ref{thm:affine1} and Lemma \ref{lem:2.6} and
	the necessity results from Proposition \ref{prop2.5} and Proposition \ref{prop-2-4}. Both Theorem \ref{thm:affine1} and Proposition \ref{prop2.5} indicate that the both equivalent statements imply the existence and uniqueness of solutions of the Riccati equations \eqref{Riccati1}-\eqref{Riccati2}.
\end{remark}

\begin{remark}\label{rem-law}
By Theorem \ref{thm:affine1} the parameters $m_0,M,n_0,N$ in (\ref{affcoeffe}) determine the law of the process $X$. Indeed, these parameters determine the functions $\phi(\cdot,u) : \R_{\ge 0} \to \bbC$ and $\psi(\cdot,u) : \R_{\ge 0} \to H_{\bbC}$ as solutions of the Riccati equations (\ref{Riccati1}) and (\ref{Riccati2}) for all $u \in \cU$, and hence by (\ref{eq:affine}) for all $0 \leq s < t$ and $u,v \in \cU$ we have
\begin{align*}
&\bbe \Big[ e^{\la u,X_s \ra_{H_{\bbC}} + \la v,X_t \ra_{H_{\bbC}}} \Big] = \int_H \int_H e^{ \la u,y \ra_{H_{\bbC}} + \la v,z \ra_{H_{\bbC}} } p_{t-s}(y,dz) p_s(x,dy)
\\ &= \int_H \bigg( \int_H e^{ \la v,z \ra_{H_{\bbC}} } p_{t-s}(y,dz) \bigg) e^{ \la u,y \ra_{H_{\bbC}} } p_s(x,dy)
\\ &= \int_H \bigg( \exp \big( \phi(t-s,v) + \la \psi(t-s,v),y \ra_{H_{\bbC}} \big) \bigg) e^{ \la u,y \ra_{H_{\bbC}} } p_s(x,dy)
\\ &= \exp \big( \phi(t-s,v) \big) \int_H e^{ \la \psi(t-s,v) + u,y \ra_{H_{\bbC}} } p_s(x,dy)
\\ &= \exp \big( \phi(t-s,v) \big) \exp \big( \phi(s,u+\psi(t-s,v)) + \la \psi(s,u+\psi(t-s,v)),x \ra_{H_{\bbC}} \big),
\end{align*}
and analogously for every finite dimensional family $(X_{t_1},\ldots,X_{t_n})$. In particular, the law of $X$ stays invariant under transformations of the volatility $\sigma$ which provide the same dispersion operator $S$.
\end{remark}

\section{Existence of affine processes}\label{sec-existence}

The goal of this section is to provide an existence result for affine processes on Hilbert spaces in the spirit of strong solutions to infinite dimensional SDEs. In Subsection \ref{sec-ex-form} we will introduce the general framework and formulate the existence result; see Theorem \ref{thm-main-ex} below. Afterwards, Subsection \ref{sec-ex-proof} is devoted to its proof.

\subsection{Formulation of the existence result}\label{sec-ex-form}

Recall that $H$ is a separable Hilbert space with orthonormal basis $(e_k)_{k \in \bbn}$, and that the state space satisfies $\calx = H_I^+ \oplus H_J$. Starting point is the SDE (see \eqref{SDE:X})
\begin{align}\label{SDE-affine}
\left\{
\begin{array}{rcl}
dX_t & = & \mu(X_t)dt + \sigma(X_t) dW_t
\\ X_0 & = & x_0,
\end{array}
\right.
\end{align}
where $\mu : \calx \to H$ and $\sigma : \calx \to L_2(U_0,H)$ are continuous,  
 $W$ is an $U$-valued Wiener process on a separable Hilbert space $U$ with some covariance operator $\Sigma_W \in L_1^{++}(U)$, and the space $U_0 := \Sigma_W^{\nicefrac{1}{2}}(U)$ is the separable Hilbert space defined according to Lemma \ref{lemma-zusammenziehen}. We define the continuous mapping $S : \calx \to L_1^+(H)$ as
\begin{align}\label{S-sigma}
S(x) := \sigma(x) \Sigma_W^{\nicefrac{1}{2}} \big( \sigma(x) \Sigma_W^{\nicefrac{1}{2}} \big)^* \quad \text{for all $x \in \calx$.}
\end{align}
In the light of Theorem \ref{thm:affine1}, we assume that with $m_0 \in H$ and $M \in L(H)$,
\begin{align}\label{mu-affine}
\mu(x) = m_0 + M x \quad \text{for all $x \in \calx$,}
\end{align}
and with  $n_0 \in L_1^+(H)$ and $N \in L(H,L_1(H))$ 
\begin{align}\label{S-affine}
S(x) = n_0 + N x \quad \text{for all $x \in \calx$.}
\end{align}
Moreover, we assume that $n_0$ is self-adjoint, and that for every $x \in \calx$ the operator $Nx$ is self-adjoint with $Nx \in L_1^+(H)$. 

To ensure that the closed convex cone $\calx$ is invariant for the SDE (\ref{SDE-affine}), we  assume that  $\mu$ is  \emph{inward pointing} at boundary points of $\calx$, i.e.
\begin{align*}
\la \mu(x),\eta \ra_H \geq 0 \quad \text{for all $x \in \calx$ and all $\eta \in H_I^+$ with $\la x,\eta \ra_H = 0$,}
\end{align*}
and that the mapping $\sigma$ is  \emph{parallel to the boundary} at boundary points of $\calx$, i.e.
\begin{align}\label{sigma-par-def}
\la \sigma(x),\eta \ra_H = 0 \quad \text{for all $x \in \calx$ and all $\eta \in H_I^+$ with $\la x,\eta \ra_H = 0$,}
\end{align}
where we note that $\la \sigma(x), \eta \ra_H$ is an operator from $L_2(U_0,\bbr)$. 

For a linear operator $T \in L(H)$ we introduce the notations $T_I := \pi_I T$, $T_J := \pi_J T$ and $T_{II} := T_I|_{H_I}$, $T_{IJ} := T_J|_{H_I}$, $T_{JI} := T_I|_{H_J}$, $T_{JJ} := T_J|_{H_J}$. We define the sequences $\lambda = (\lambda_i)_{i \in I} \subset \bbr_+$ and $\kappa = (\kappa_i)_{i \in I} \subset \bbr_+$ as
\begin{align*}
\lambda_i := \| S(e_i)_{II} \, e_i \|_H \quad \text{and} \quad \kappa_i := \| S(e_i)_{IJ} \, e_i \|_H \quad \text{for each $i \in I$.}
\end{align*}
As we will show, we have $\lambda \in \ell^2(I)$. As a consequence, there exists a sequence $\nu = (\nu_i)_{i \in I} \subset (0,\infty)$ such that $\nu_i \to 0$ and $( \lambda_i / \nu_i )_{i \in I} \in \ell^2(I)$. Let $T \in K^{++}(H_I)$ be the compact linear operator with representation
\begin{align}\label{T-compact-repr}
Tx = \sum_{i \in I} \nu_i \la x,e_i \ra_H \, e_i \quad \text{for each $x \in H_I$,}
\end{align}
and let $H_{I,0} := T(H_I)$ be the retracted subspace with compact embedding defined according to Lemma \ref{lemma-zusammenziehen}. Furthermore, we set $H_{I,0}^+ := T(H_I^+)$ and $\calx_0 := H_{I,0}^+ \oplus H_J$. In addition, we require the following.

\begin{assumption}\label{ass-ex}
We suppose that 
\begin{enumerate}[(i)]
\item  $U = H$, and  $\Sigma_W$ has a diagonal structure along the orthonormal basis $( e_k )_{k \in \bbn}$,

\item for each $x \in \calx$ the operator $\sigma(x) \Sigma_W^{\nicefrac{1}{2}}$ is self-adjoint,

\item with $I_{>0} := \{ i \in I : \lambda_i > 0 \}$ it holds that
\begin{align}\label{frac-in-l2}
( \kappa_i / \lambda_i )_{i \in I_{>0}} \in \ell^2(I_{>0}),
\end{align}
\item and that
\begin{align}\label{ass-prin-drift}
&m_{0,I} \in H_{I,0}^+ \quad \text{and} \quad M_{II} T = T M_{II}, \\
\label{trace-cond-drift}
&\sum_{i \in I} \| M_{II}^i \|_{H_I'} < \infty,
\end{align}
where for each $i \in I$ the continuous linear functional $M_{II}^i \in H_I'$ is given by
\begin{align*}
M_{II}^i x := \la M_{II} x, e_i \ra_H, \quad x \in H_I.
\end{align*}
\end{enumerate}
\end{assumption}

These conditions do not mean severe restrictions. Indeed, the first condition means that the state space of the Wiener process is the same as the state space of the SDE (\ref{SDE-affine}), and that its covariance operator has a diagonal form with respect to the given orthonormal basis. This is also typically assumed in finite dimension. The second condition means that for each $x \in \calx$ we have $S(x)^{\nicefrac{1}{2}} = \sigma(x) \Sigma_W^{\nicefrac{1}{2}}$, and hence
\begin{align}\label{sigma-S}
\sigma(x) = S(x)^{\nicefrac{1}{2}} \Sigma_W^{-\nicefrac{1}{2}} \quad \text{for all $x \in \calx$.}
\end{align}
As mentioned in Remark \ref{rem-law}, other choices of the volatility $\sigma$ with the same dispersion operator $S$ do not change the law of the solution. Condition (\ref{frac-in-l2}) ensures that we can find a linear transformation $\Lambda \in L(H)$ with $\Lambda(\calx) = \calx$ such that for the transformed SDE 
\begin{align}\label{SDE-affine-Y}
\left\{
\begin{array}{rcl}
dY_t & = & \bar{\mu}(Y_t)dt + \bar{\sigma}(Y_t) dW_t
\\ Y_0 & = & y_0,
\end{array}
\right.
\end{align}
corresponding to $Y = \Lambda X$, the drift $\bar{\mu} : \calx \to H$ has a decomposition
\begin{align}\label{mu-decomp}
\bar{\mu}(y) = \bar{\mu}_{II}(y_I) + \bar{\mu}_J(y), \quad y \in \calx
\end{align}
with affine mappings $\bar{\mu}_{II} : H_I^+ \to H_I$ and $\bar{\mu}_J : \calx \to H_J$, and the volatility $\bar{\sigma} : \calx \to L_2(U_0,H)$ has a block diagonal structure
\begin{align}\label{sigma-decomp}
\bar{\sigma}(y)u = \bar{\sigma}_{II}(y_I)u_I + \bar{\sigma}_{JJ}(y_I)u_J, \quad y \in \calx \text{ and } u \in U_0
\end{align}
with mappings $\bar{\sigma}_{II} : H_I^+ \to L_2(U_{I,0},H_I)$ and $\bar{\sigma}_{JJ} : H_I^+ \to L_2(U_{J,0},H_J)$. This allows us to express the transformed SDE (\ref{SDE-affine-Y}) by the two coupled SDEs
\begin{align}\label{SDE-affine-Y-I}
\left\{
\begin{array}{rcl}
dY_{I,t} & = & \bar{\mu}_{II}(Y_{I,t}) dt + \bar{\sigma}_{II}(Y_{I,t}) dW_t
\\ Y_{I,0} & = & y_{0,I}
\end{array}
\right.
\end{align}
and
\begin{align}\label{SDE-affine-Y-J}
\left\{
\begin{array}{rcl}
dY_{J,t} & = & \bar{\mu}_J(Y_t) dt + \bar{\sigma}_{JJ}(Y_{I,t}) dW_t
\\ Y_{J,0} & = & y_{0,J},
\end{array}
\right.
\end{align}
and then our task is essentially reduced to solving the SDE (\ref{SDE-affine-Y-I}), which is feasible by virtue of condition (\ref{ass-prin-drift}). The condition (\ref{trace-cond-drift}) ensures pathwise uniqueness. Now, our main result of this section reads as follows. Concerning the notion of a unique strong solution starting in $\calx_0$, we refer to Appendix \ref{app-SDE}.

\begin{theorem}\label{thm-main-ex}
Suppose that Assumption \ref{ass-ex} is fulfilled. Then the  SDE \eqref{SDE-affine} has a unique strong solution starting in $\calx_0$.
\end{theorem}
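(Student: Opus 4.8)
The plan is to follow the classical route for constructing strong solutions of SDEs with non-Lipschitz, square-root type coefficients: establish the existence of a weak solution, establish pathwise uniqueness, and then invoke the adjusted Yamada-Watanabe theorem from Appendix \ref{app-SDE} to upgrade these to a unique strong solution. The two genuine difficulties—weak existence, which in infinite dimensions demands a compactness argument, and pathwise uniqueness despite merely H\"older-continuous coefficients—are handled separately, after the structural reductions permitted by Assumption \ref{ass-ex} have been carried out.

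First I would perform the linear change of variables $Y = \Lambda X$ announced before the statement. Using condition \eqref{frac-in-l2} one constructs $\Lambda \in L(H)$ with $\Lambda(\calx) = \calx$ (and preserving the retracted cone $\calx_0$) so that the transformed equation \eqref{SDE-affine-Y} acquires the triangular structure \eqref{mu-decomp}--\eqref{sigma-decomp}. Since $\Lambda$ is a homeomorphism of $\calx$, it suffices to solve \eqref{SDE-affine-Y}, and this equation decouples into the autonomous $H_I^+$-valued equation \eqref{SDE-affine-Y-I}, whose coefficients depend only on $Y_I$, together with the equation \eqref{SDE-affine-Y-J} for $Y_J$, driven by $Y_I$. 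The strategy is therefore to solve \eqref{SDE-affine-Y-I} first and feed its solution into \eqref{SDE-affine-Y-J}.

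The heart of the proof is \eqref{SDE-affine-Y-I}. For the existence of a weak (martingale) solution I would invoke the refined compactness result adapted from \cite{GMR}: the coefficients are continuous with appropriate linear growth, and the initial condition $y_{0,I}$ lies in the retracted subspace $H_{I,0}^+$ whose embedding into $H_I$ is compact (Lemma \ref{lemma-zusammenziehen}); this compact embedding furnishes tightness of a sequence of approximating solutions, and a Skorokhod-type limit yields a weak solution. For pathwise uniqueness I would run an infinite-dimensional Yamada-Watanabe estimate in the spirit of \cite{Yamada-Watanabe-1971}: since $\bar\sigma_{II}(y_I) = S(y_I)^{\nicefrac{1}{2}} \Sigma_W^{-\nicefrac{1}{2}}$ behaves coordinatewise like $\sqrt{y_i}$, one applies the standard comparison functions obeying $\int_0^\varepsilon a_n(r)^{-1}\,dr = \infty$ coordinate by coordinate and sums the resulting bounds; the summability condition \eqref{trace-cond-drift}, namely $\sum_{i\in I}\|M_{II}^i\|_{H_I'}<\infty$, is precisely what keeps the accumulated drift contribution across all coordinates finite so that the total error vanishes. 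State-space invariance ($Y_I \in H_I^+$) follows from the inward-pointing drift and parallel-to-the-boundary volatility hypotheses. Combining weak existence with pathwise uniqueness, the adjusted Yamada-Watanabe theorem delivers a unique strong solution $Y_I$ of \eqref{SDE-affine-Y-I} starting in $H_{I,0}^+$.

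With $Y_I$ determined, \eqref{SDE-affine-Y-J} is an SDE for $Y_J$ whose volatility $\bar\sigma_{JJ}(Y_{I,t})$ is a prescribed adapted process and whose drift $\bar\mu_J(Y_t)$ is affine in $Y_J$; this is a linear equation with adapted, locally bounded coefficients and hence possesses a unique strong solution by standard Hilbert-space theory. Assembling $Y = Y_I \oplus Y_J$ and transforming back via $X = \Lambda^{-1} Y$ produces the unique strong solution of \eqref{SDE-affine} starting in $\calx_0$. I expect the main obstacle to be the pathwise-uniqueness step: making the one-dimensional Yamada-Watanabe comparison argument operate simultaneously over infinitely many coordinates while keeping the summed drift and cross-coordinate volatility errors controlled—exactly where \eqref{trace-cond-drift} and the $\ell^2$-condition \eqref{frac-in-l2} are consumed—and, secondarily, verifying tightness for weak existence through the compact embedding of the retracted subspace $H_{I,0}^+$.
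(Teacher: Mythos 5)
Your proposal is correct and follows essentially the same route as the paper: transform by $\Lambda$ to obtain the triangular structure \eqref{mu-decomp}--\eqref{sigma-decomp}, obtain weak existence for the $H_I^+$-part via the refined result from \cite{GMR} with initial data in the retracted subspace $H_{I,0}^+$ (Theorem \ref{thm-SDE-existence}), obtain pathwise uniqueness via the infinite dimensional Yamada--Watanabe estimate using \eqref{trace-cond-drift} (Theorem \ref{thm-SDE-uniqueness}), solve the linear $J$-equation given $Y_I$, and conclude with the adjusted Yamada--Watanabe theorem (Theorem \ref{thm-YW}). The only cosmetic difference is that the paper assembles a weak solution and pathwise uniqueness for the full SDE \eqref{SDE-affine} (Propositions \ref{prop-affine-existence} and \ref{prop-affine-unique}) and applies Theorem \ref{thm-YW} once at the end, whereas you apply it to the $I$-subsystem first; both orderings work.
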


If condition \eqref{trace-cond-drift} does not hold,  we still obtain  existence of a weak solution, but pathwise uniqueness might not be satisfied.

\subsection{Proof of the existence result}\label{sec-ex-proof}

The goal of this subsection is to provide the proof of Theorem \ref{thm-main-ex}. The main idea is to apply our version of the Yamada-Watanabe theorem (see Theorem \ref{thm-YW}). As already mentioned, after a suitable transformation we may consider the two coupled SDEs (\ref{SDE-affine-Y-I}) and (\ref{SDE-affine-Y-J}). This transformation procedure is similar to that in \cite{Filipovic-Mayerhofer}, where existence of affine processes has been proven in finite dimension. After this step, we obtain the existence of weak solutions by using a refined version of a result from \cite{GMR}, where $H_{I,0}$ serves as the retracted subspace with compact embedding, and pathwise uniqueness follows from a version of the uniqueness result from \cite{Yamada-Watanabe-1971} in infinite dimension. 

We start with characterizations when the drift is inward pointing, and when the volatility is parallel.

\begin{proposition}\label{prop-inward}
The following statements are equivalent:
\begin{enumerate}
\item[(i)] The mapping $\mu$ is inward pointing at boundary points of $\calx$.

\item[(ii)] We have
\begin{align*}
\la \mu(x),e_i \ra_H \geq 0 \quad \text{for all $x \in \calx$ and all $i \in I$ with $\la x,e_i \ra_H = 0$.}
\end{align*}

\item[(iii)] We have
\begin{align}\label{cond-inward-1}
m_0 &\in \calx,
\\ \label{cond-inward-2} M x &\in ( H_I^+ + \lin \{ e_i \} ) \oplus H_J \quad \text{for all $i \in I$ and $x \in \lin^+ \{ e_i \}$,}
\\ \label{cond-inward-3} M(H_J) &\subset H_J.
\end{align}
\end{enumerate}
\end{proposition}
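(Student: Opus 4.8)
The plan is to prove the three-way equivalence by establishing the chain (i) $\Rightarrow$ (ii) $\Rightarrow$ (iii) $\Rightarrow$ (i). The implication (i) $\Rightarrow$ (ii) is immediate: the inward pointing condition requires $\la \mu(x),\eta \ra_H \geq 0$ for all $\eta \in H_I^+$ with $\la x,\eta \ra_H = 0$, and each basis vector $e_i$ with $i \in I$ lies in $H_I^+$. So whenever $\la x,e_i \ra_H = 0$ we may take $\eta = e_i$ and read off the desired inequality directly.

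For (ii) $\Rightarrow$ (iii), I would exploit the affine structure $\mu(x) = m_0 + Mx$ from \eqref{mu-affine}. First I would derive \eqref{cond-inward-1}: testing (ii) against suitable points. Fix $i \in I$ and choose $x \in \calx$ with $x_I$ supported away from $e_i$ and letting the relevant coordinate go to zero, one isolates $\la m_0,e_i\ra_H$; more carefully, evaluating at points approaching the origin coordinatewise and using continuity forces $\la m_0,e_i\ra_H \geq 0$ for all $i \in I$, which together with the obvious absence of sign constraints on the $J$-coordinates gives $m_0 \in \calx$. For \eqref{cond-inward-2} and \eqref{cond-inward-3}, the idea is to separate the affine part from the constant part: since $\mu(x) = m_0 + Mx$ and we already control $m_0$, condition (ii) translates into a positivity condition on the linear map $M$ at boundary configurations. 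Concretely, for $x \in \lin^+\{e_i\}$ (so $x = c\,e_i$ with $c \geq 0$) and any index $k \in I$ with $k \neq i$, the point $x$ satisfies $\la x,e_k\ra_H = 0$, so (ii) applied after subtracting off the $m_0$ contribution yields $\la Mx,e_k\ra_H \geq 0$ for all such $k$; this is precisely the statement that $Mx$ has nonnegative $I$-coordinates except possibly in the $i$-th slot, which is the meaning of $Mx \in (H_I^+ + \lin\{e_i\}) \oplus H_J$. Finally \eqref{cond-inward-3} follows by taking $x \in H_J$, noting every $\la x,e_i\ra_H = 0$ for $i \in I$, and concluding $\la Mx,e_i\ra_H \geq 0$; applying the same argument to $-x$ (also in $H_J$) gives the reverse inequality, forcing $\la Mx,e_i\ra_H = 0$ for all $i \in I$, i.e.\ $Mx \in H_J$.

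For the closing implication (iii) $\Rightarrow$ (i), I would take an arbitrary $x \in \calx$ and $\eta \in H_I^+$ with $\la x,\eta\ra_H = 0$, and decompose $\eta = \sum_{i \in I} \eta_i e_i$ with $\eta_i \geq 0$. The constraint $\la x,\eta\ra_H = 0$ together with $x \in \calx = H_I^+ \oplus H_J$ (so $x_I \geq 0$) forces $\eta_i = 0$ on the support of $x_I$, meaning $\eta$ is supported on the set $\{i \in I : \la x,e_i\ra_H = 0\}$. Then I would write $\la \mu(x),\eta\ra_H = \la m_0,\eta\ra_H + \la Mx,\eta\ra_H$ and bound each piece: $\la m_0,\eta\ra_H \geq 0$ since $m_0 \in \calx$ and $\eta \in H_I^+$, while for the second term I decompose $x = x_I + x_J$, use \eqref{cond-inward-3} to discard the $x_J$ contribution (as $Mx_J \in H_J \perp \eta$), and use \eqref{cond-inward-2} together with the fact that $\eta$ vanishes at the coordinates where $Mx_I$ might be negative.

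The main obstacle I anticipate is the careful bookkeeping in the second term of the last step: condition \eqref{cond-inward-2} is stated only for the extreme rays $x \in \lin^+\{e_i\}$, so to handle a general $x_I \in H_I^+$ I must write $x_I = \sum_{i} \la x,e_i\ra_H\, e_i$ as a positive combination, apply \eqref{cond-inward-2} ray-by-ray, and then check that the single exceptional $i$-coordinate allowed to be negative for each ray is always annihilated by $\eta$ (because on the support of $x_I$ we have $\eta_i = 0$). Making this telescoping argument rigorous in the infinite-dimensional setting requires justifying the interchange of the infinite sum with the continuous functional $\la \cdot,\eta\ra_H$ and with the bounded operator $M$, which is where the linearity and continuity of $M \in L(H)$ together with convergence of $x_I = \sum_i \la x,e_i\ra_H\, e_i$ in $H$ do the work.
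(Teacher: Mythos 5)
Your overall architecture is sound, and it is worth noting that the paper itself gives almost no proof here: it declares (i) $\Leftrightarrow$ (ii) ``straightforward to check'' and refers (ii) $\Leftrightarrow$ (iii) to the analogous statement \cite[Prop.~A.10]{Tappe-affin}. So your self-contained chain (i) $\Rightarrow$ (ii) $\Rightarrow$ (iii) $\Rightarrow$ (i) is genuinely supplying content the paper outsources. The legs (i) $\Rightarrow$ (ii) and (iii) $\Rightarrow$ (i) are correct as you describe them: in particular, your observation that $\la x,\eta \ra_H = 0$ with $x_I \geq 0$ and $\eta \in H_I^+$ forces $\eta$ to vanish on the support of $x_I$, and your care in justifying the interchange of $M$ and $\la \cdot,\eta \ra_H$ with the expansion $x_I = \sum_{i} \la x,e_i \ra_H \, e_i$, are exactly the right points in the closing implication.

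The one step that fails as literally written is ``(ii) applied after subtracting off the $m_0$ contribution yields $\la Mx,e_k \ra_H \geq 0$.'' Subtracting $m_0$ does not work: from $\la m_0 + Mx, e_k \ra_H \geq 0$ and $\la m_0,e_k \ra_H \geq 0$ one can only conclude $\la Mx,e_k \ra_H \geq -\la m_0,e_k \ra_H$, which is the wrong direction. The correct mechanism is homogeneity of the constraint: for $x = c\,e_i$ the hypothesis of (ii) holds for \emph{every} $c \geq 0$, so $\la m_0,e_k \ra_H + c \la Me_i,e_k \ra_H \geq 0$ for all $c \geq 0$, and dividing by $c$ and letting $c \to \infty$ gives $\la Me_i,e_k \ra_H \geq 0$. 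The same scaling is needed for \eqref{cond-inward-3}: apply (ii) to $\pm c x$ for $x \in H_J$ and let $c \to \infty$ to obtain $\la Mx,e_i \ra_H = 0$ for all $i \in I$. (Also, \eqref{cond-inward-1} follows at once by evaluating (ii) at $x = 0 \in \calx$, where every $I$-coordinate vanishes; no limiting argument over points approaching the origin is needed.) With these repairs your argument is complete and constitutes a full proof of the equivalence the paper only sketches.
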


Suppose that $\mu$ is inward pointing at boundary points of $\calx$. Then \eqref{cond-inward-1} immediately yields that $m_{0,I} \in H_I^+$, and therefore, the condition $m_{0,I} \in H_{I,0}^+$ appearing in (\ref{ass-prin-drift}) is equivalent to $m_{0,I} \in H_{I,0}$.

\begin{proof}[Proof of Proposition \ref{prop-inward}]
(i) $\Leftrightarrow$ (ii): This equivalence is straightforward to check.

\noindent(ii) $\Leftrightarrow$ (iii): The proof of this equivalence is analogous to that of \cite[Prop. A.10]{Tappe-affin}.
\end{proof}

\begin{proposition}\label{prop-parallel}
The following statements are equivalent:
\begin{enumerate}
\item[(i)] The mapping $\sigma$ is parallel to the boundary at boundary points of $\calx$.

\item[(ii)] We have
\begin{align*}
\la \sigma(x),e_i \ra_H = 0 \quad \text{for all $x \in \calx$ and all $i \in I$ with $\la x,e_i \ra_H = 0$.}
\end{align*}

\item[(iii)] We have
\begin{align}\label{cond-parallel-1}
n_0 \xi &= 0 \quad \text{for all $\xi \in H_I$,}
\\ \label{cond-parallel-2} N(x) &= 0 \quad \text{for all $x \in H_J$,}
\\ \label{cond-parallel-3} N(x)\xi &= 0 \quad \text{for all $i,j \in I$ with $i \neq j$ and all $x \in \lin^+ \{ e_i \}$ and $\xi \in \lin^+ \{ e_j \}$.}
\end{align}
\end{enumerate}
\end{proposition}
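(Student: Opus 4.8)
The plan is to prove the equivalence of (i), (ii), and (iii) in Proposition~\ref{prop-parallel} by first reducing the geometric condition to a coordinate condition, and then translating the coordinate condition into explicit constraints on the affine coefficients $n_0$ and $N$. The equivalence (i)~$\Leftrightarrow$~(ii) should be essentially immediate: since $\calx = H_I^+ \oplus H_J$, the boundary of $\calx$ is reached precisely when some coordinate $\la x, e_i \ra_H$ with $i \in I$ vanishes, and the outward normals at such boundary points are the nonnegative combinations of the basis vectors $\{e_i\}$ with $\la x, e_i \ra_H = 0$. Because $H_I^+$ is the cone generated by these $e_i$, any $\eta \in H_I^+$ with $\la x, \eta \ra_H = 0$ is a nonnegative combination of exactly those $e_i$ for which $\la x, e_i \ra_H = 0$; thus testing against all such $\eta$ is equivalent to testing against the individual basis vectors $e_i$, giving (i)~$\Leftrightarrow$~(ii).

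The substantive part is the equivalence (ii)~$\Leftrightarrow$~(iii). First I would rewrite condition (ii) in terms of the dispersion operator $S$ rather than $\sigma$ directly. The natural route is to observe that $\la \sigma(x), e_i \ra_H = 0$ as an element of $L_2(U_0, \bbr)$ is equivalent to $\la S(x) e_i, e_i \ra_H = 0$, since $S(x) = \sigma(x)\Sigma_W^{\nicefrac12}(\sigma(x)\Sigma_W^{\nicefrac12})^*$ is non-negative definite, so that the $i$-th row of $\sigma(x)\Sigma_W^{\nicefrac12}$ vanishing is equivalent to the $(i,i)$ diagonal entry of $S(x)$ vanishing; and by non-negative definiteness, $\la S(x)e_i, e_i\ra_H = 0$ forces the entire $i$-th row and column $S(x)e_i$ to vanish. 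Thus condition (ii) becomes: $S(x) e_i = 0$ for all $x \in \calx$ and all $i \in I$ with $\la x, e_i \ra_H = 0$. Now I would insert the affine representation $S(x) = n_0 + Nx$ and evaluate on suitable boundary points. Taking $x = 0$ (which lies on the boundary for every $i \in I$) yields $n_0 e_i = 0$ for all $i \in I$, i.e.\ \eqref{cond-parallel-1}. Next, for $i \neq j$ in $I$, taking $x \in \lin^+\{e_j\}$ (so $\la x, e_i\ra_H = 0$) gives $(n_0 + Nx)e_i = 0$; subtracting the already-established $n_0 e_i = 0$ yields $(Nx)e_i = 0$, which after a symmetry/self-adjointness argument gives \eqref{cond-parallel-3}. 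For \eqref{cond-parallel-2}, any $x \in H_J$ is a boundary point for every $i \in I$ simultaneously, so $(n_0 + Nx)e_i = 0$ for all $i \in I$, forcing $(Nx)e_i = 0$ for all $i$; combined with the self-adjointness of $Nx$ and the fact that $Nx \in L_1^+(H)$, one concludes $Nx = 0$, giving \eqref{cond-parallel-2}.

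For the converse direction (iii)~$\Rightarrow$~(ii), I would take an arbitrary $x \in \calx$ with $\la x, e_i\ra_H = 0$ for some $i \in I$ and decompose $x = x_{I \setminus \{i\}} + x_J$ using \eqref{Dsep} and the hypothesis $\la x, e_i\ra_H = 0$. Then $S(x)e_i = n_0 e_i + (Nx)e_i$, and I would show each term vanishes: $n_0 e_i = 0$ by \eqref{cond-parallel-1}; the $H_J$-component $(Nx_J)e_i = 0$ by \eqref{cond-parallel-2}; and the remaining $H_I^+$-component $(Nx_{I\setminus\{i\}})e_i = 0$ by writing $x_{I\setminus\{i\}}$ as a limit of nonnegative combinations of $e_j$ with $j \neq i$ and applying \eqref{cond-parallel-3} together with the continuity and linearity of $N$. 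Passing back from $S(x)e_i = 0$ to $\la \sigma(x), e_i\ra_H = 0$ via non-negative definiteness closes the loop.

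The main obstacle I anticipate is the careful handling of the infinite-dimensional bookkeeping in the passage between $\sigma$ and $S$, and in the limiting argument for \eqref{cond-parallel-3}: one must ensure that the self-adjointness of $Nx$ and the $L_1^+$ structure are genuinely used to upgrade ``the $i$-th diagonal entry vanishes'' to ``the whole $i$-th row vanishes,'' and that the cone $H_I^+$ really is the closed convex cone generated by $\{e_i\}_{i \in I}$ so that density/continuity arguments apply. These are exactly the points where the finite-dimensional intuition requires an extra non-negative-definiteness input, and where \eqref{cond-parallel-3}'s restriction to rank-one generators $x \in \lin^+\{e_i\}$ and $\xi \in \lin^+\{e_j\}$ must be shown to suffice for the full statement after taking limits.
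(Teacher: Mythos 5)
Your proposal is correct and follows essentially the same route as the paper: the paper likewise reduces (ii)\,$\Leftrightarrow$\,(iii) to the condition $\la S(x)\eta,\eta\ra_H = 0$ via injectivity of $(\Sigma_W^{\nicefrac{1}{2}})^*$ and then defers the coefficient analysis to \cite[Prop.~A.20]{Tappe-affin}, which is exactly the evaluation-at-boundary-points argument you carry out explicitly. The only step worth making explicit is that \eqref{cond-parallel-2} uses that $H_J$ is a linear subspace of $\calx$, so both $Nx \in L_1^+(H)$ and $N(-x) = -Nx \in L_1^+(H)$, which together with $(Nx)e_i = 0$ for $i \in I$ and self-adjointness forces $Nx = 0$.
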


\begin{proof}
(i) $\Leftrightarrow$ (ii): This equivalence is straightforward to check.

\noindent(ii) $\Leftrightarrow$ (iii): Note that (\ref{sigma-par-def}) is satisfied if and only if
\begin{align*}
\sigma(x)^* \eta = 0 \quad \text{for all $x \in \calx$ and all $\eta \in H_I^+$ with $\la x,\eta \ra_H = 0$.}
\end{align*}
Since $(\Sigma_W^{\nicefrac{1}{2}})^*$ is one-to-one, by (\ref{S-sigma}) this is equivalent to
\begin{align*}
\la S(x)\eta, \eta \ra_H = 0 \quad \text{for all $x \in \calx$ and all $\eta \in H_I^+$ with $\la x,\eta \ra_H = 0$.}
\end{align*}
Therefore, the proof of this equivalence is analogous to that of \cite[Prop. A.20]{Tappe-affin}.
\end{proof}

Consequently, the inward pointing property of $\mu$ and the parallel property of $\sigma$ mean that the parameters $m_0,M,n_0,N$ satisfy the admissibility conditions from Proposition \ref{prop2.5}. Hence, in this case the general Riccati system \eqref{Riccati1}-\eqref{Riccati2} has a unique solution $(\phi(\cdot,u),\psi(\cdot,u)) : \R_+ \to \bbC_- \times (H_\bbC^-)_I \oplus iH _J$ for each
$u \in (H_\bbC^-)_I \oplus i H_J$.

From now on, we assume that $\mu$ is inward pointing, and that $\sigma$ is parallel. The following result in particular shows that $\lambda \in \ell^2(I)$, and that the dispersion operator $S$ restricted to $H_I$ has a diagonal structure.

\begin{proposition}\label{prop-lambda}
The following statements are true:
\begin{enumerate}[(i)]
\item We have $S(x) = S(x_I)$ for all $x \in \calx$.

\item We have $S(x)\xi = N(x)\xi$ for all $x \in H_I^+$ and $\xi \in H_I$.

\item We have $S(e_i)e_j = 0$ for all $i,j \in I$ with $i \neq j$.

\item We have $\lambda \in \ell^2(I)$ and the representation
\begin{align}\label{lambda-repr-main-part}
\lambda_i = \la S(e_i)_{II} \, e_i, e_i \ra_H, \quad i \in I. 
\end{align}

\item We have the representation
\begin{align}\label{op-repr-general-main-part}
S(x)_{II} \, \xi = \sum_{i \in I} \lambda_i \la x,e_i \ra_H \la e_i,\xi \ra_H \, e_i \quad \text{for all $x \in H_I^+$ and $\xi \in H_I$.}
\end{align}

\end{enumerate}
\end{proposition}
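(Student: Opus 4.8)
The plan is to exploit the parallel property of $\sigma$, which by Proposition \ref{prop-parallel} is equivalent to the three conditions \eqref{cond-parallel-1}--\eqref{cond-parallel-3}, and then to combine these with the self-adjointness and non-negative definiteness of $S(x)$ to extract the claimed diagonal structure. Throughout I write $n_i := N(e_i) \in L_1(H)$ as in Theorem \ref{thm:affine1}, so that by \eqref{S-affine} we have $S(e_i) = n_0 + n_i$.

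First I would dispose of (i), (ii) and (iii), which are immediate. For (i), decompose $x = x_I + x_J$ and use linearity of $N$ together with \eqref{cond-parallel-2}, which gives $N(x_J) = 0$, so that $S(x) = n_0 + N(x_I) = S(x_I)$. For (ii), note that for $\xi \in H_I$ condition \eqref{cond-parallel-1} yields $n_0 \xi = 0$, whence $S(x)\xi = n_0\xi + N(x)\xi = N(x)\xi$. Part (iii) then follows by applying (ii) with $x = e_i \in H_I^+$ and invoking \eqref{cond-parallel-3} with $\xi = e_j$, which gives $S(e_i)e_j = N(e_i)e_j = 0$ for $i \neq j$.

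The core of the argument is (iv). Here the key observation is that (iii) together with the self-adjointness of $S(e_i)$ forces $S(e_i)_{II}e_i$ to be a scalar multiple of $e_i$: for $j \in I$ with $j \neq i$ one has $\la S(e_i)e_i, e_j\ra_H = \la e_i, S(e_i)e_j\ra_H = 0$ by (iii), so that $S(e_i)_{II}e_i = \pi_I S(e_i)e_i = \la S(e_i)e_i, e_i\ra_H \, e_i$. Since $S(e_i)$ is non-negative definite, $\la S(e_i)e_i, e_i\ra_H \ge 0$, and taking norms yields exactly the representation \eqref{lambda-repr-main-part}, with $\lambda_i = \la S(e_i)e_i, e_i\ra_H \ge 0$. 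For the summability $\lambda \in \ell^2(I)$ I would use that the parallel property places us in the setting of Proposition \ref{prop2.5}, so that the admissibility condition \eqref{prop1:10}, namely $\sum_{i\in I}\|n_i\|^2 < \infty$, holds; since $\lambda_i = \la n_i e_i, e_i\ra_H \le \|n_i\|$ (using $n_0 e_i = 0$), this gives $\sum_{i\in I}\lambda_i^2 \le \sum_{i\in I}\|n_i\|^2 < \infty$.

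Finally, for (v) I would expand by linearity and continuity of $N$: for $x \in H_I^+$ one has $N(x) = \sum_{i\in I}\la x, e_i\ra_H \, n_i$ with convergence in $L_1(H)$. Writing $\xi = \sum_{j\in I}\la \xi, e_j\ra_H e_j \in H_I$ and using that $n_i e_j = S(e_i)e_j = 0$ for $i \neq j$ by (iii), the double sum collapses to the diagonal, leaving $N(x)\xi = \sum_{i\in I}\la x, e_i\ra_H \la \xi, e_i\ra_H \, n_i e_i$. Applying $\pi_I$ and the identity $\pi_I(n_i e_i) = S(e_i)_{II}e_i = \lambda_i e_i$ from (iv) then produces \eqref{op-repr-general-main-part}, where I use (ii) once more to identify $S(x)_{II}\xi = \pi_I N(x)\xi$. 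The main technical point to be careful about is exactly this handling of the infinite sums in (v): I must justify convergence of $N(x) = \sum_i \la x, e_i\ra_H n_i$ in the trace norm (from continuity of $N \in L(H,L_1(H))$) and that the continuous projection $\pi_I$ may be applied termwise. Everything else is elementary bookkeeping with the block decomposition and self-adjointness; the conceptual step that does the real work is the passage from the off-diagonal vanishing in (iii) to the diagonal representations in (iv) and (v).
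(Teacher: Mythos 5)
Your treatment of (i)--(iii), of the representation \eqref{lambda-repr-main-part} in (iv), and of (v) coincides with the paper's argument: the same three conditions from Proposition \ref{prop-parallel} are invoked, and the diagonal form is extracted from the off-diagonal vanishing in (iii) together with self-adjointness of $S(e_i)_{II}$, exactly as in the paper. The one place where you genuinely diverge is the summability $\lambda \in \ell^2(I)$. The paper proves this self-containedly: for arbitrary $\nu \in \ell^2(I)$ it forms $y = \sum_{i \in I} \nu_i e_i \in H_I$, observes that $\sum_{i \in I} \lambda_i \nu_i = \sum_{i \in I} \la S(y)_{II}\,e_i, e_i \ra_H$ converges because $S(y)_{II} \in L_1(H_I)$, and concludes by the uniform boundedness principle. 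You instead import the admissibility condition \eqref{prop1:10}, $\sum_{i \in I} \|n_i\|^2 < \infty$, together with the (correct) bound $\lambda_i \le \|n_i\|$. This works only insofar as \eqref{prop1:10} is actually available in the Section 3 framework: it is not among the standing hypotheses there (one only assumes $N \in L(H,L_1(H))$, which yields the uniform bound $\|n_i\| \le \|N\|$ but not square-summability), and the paper's preceding remark that the admissibility conditions of Proposition \ref{prop2.5} are satisfied is an assertion whose verification for \eqref{prop1:10} would itself require essentially the same trace/uniform-boundedness argument. So your route is shorter but leans on a claim that, justified from scratch, reduces to the paper's own proof; the direct argument via $S(y)_{II} \in L_1(H_I)$ is the safer, self-contained one and is what the $\ell^2$-statement in (iv) is there to establish in the first place. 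Everything else, including your care with the $L_1$-convergence of $N(x) = \sum_{i} \la x, e_i \ra_H \, n_i$ in part (v), is sound.
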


\begin{proof}
By condition (\ref{cond-parallel-2}) from Proposition \ref{prop-parallel} we have $S(x) = S(x_I)$ for all $x \in \calx$, and by condition (\ref{cond-parallel-1}) from Proposition \ref{prop-parallel} we have $S(x)\xi = N(x)\xi$ for all $x \in H_I^+$ and $\xi \in H_I$. Therefore, by condition (\ref{cond-parallel-3}) from Proposition \ref{prop-parallel} we have $S(e_i)e_j = 0$ for all $i,j \in I$ with $i \neq j$. Now, let $i \in I$ be arbitrary. Note that $S(e_i)_{II}$ is self-adjoint, because $\pi_I S(e_i) \pi_I$ is self-adjoint. Thus, we obtain
\begin{align*}
\la S(e_i)_{II}\,e_i,e_j \ra_H = \la e_i,S(e_i)_{II}\,e_j \ra_H = 0 \quad \text{for each $j \in I$ with $j \neq i$,}
\end{align*}
and hence $S(e_i)_{II} \, e_i \in \lin \{ e_i \}$. Therefore, we have
\begin{align*}
S(e_i)_{II} \, e_i = \la S(e_i)_{II}e_i,e_i \ra_H \, e_i,
\end{align*}
and hence, noting that $S(e_i)_{II} \in L_1^+(H_I)$, we obtain
\begin{align*}
\lambda_i = \| S(e_i)_{II} \, e_i \|_H = \la S(e_i)_{II} \, e_i, e_i \ra_H,
\end{align*}
showing (\ref{lambda-repr-main-part}) and $S(e_i)_{II} \, e_i = \lambda_i e_i$, which also proves (\ref{op-repr-general-main-part}). Now, let $\nu \in \ell^2(I)$ be arbitrary, and set $y := \sum_{i \in I} \nu_i e_i \in H_I$. Then the series
\begin{align*}
\sum_{i \in I} \lambda_i \nu_i = \sum_{i \in I} \la S(e_i)_{II} \, e_i, e_i \ra_H \, \nu_i = \sum_{i \in I} \la S(y)_{II} \, e_i, e_i \ra_H
\end{align*}
converges, because $S(y)_{II} \in L_1(H_I)$. By the uniform boundedness principle we deduce that $\lambda \in \ell^2(I)$.
\end{proof}

Now, we will deal with linear transformations which leave the state space $\calx$ invariant. The next result provides a characterization of such transformations. 

\begin{lemma}\label{lemma-X-inv-trans}
For a bounded linear operator $\Lambda \in L(H)$ the following statements are equivalent:
\begin{enumerate}
\item[(i)] We have $\Lambda(\calx) \subset \calx$.

\item[(ii)] We have $\Lambda^*(H_I^+) \subset H_I^+$ and $\Lambda(H_J) \subset H_J$.
\end{enumerate}
\end{lemma}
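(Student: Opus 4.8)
The plan is to prove the equivalence by unwinding both conditions into explicit coordinate statements about $\Lambda$ acting on the generating cone and subspace of $\calx = H_I^+ \oplus H_J$, and then matching them via the adjoint. Recall that $\calx$ is the direct sum of the closed cone $H_I^+$ and the closed subspace $H_J$. The key structural observation I would use is that $\Lambda(\calx) \subset \calx$ can be tested separately on the two ``pieces'' and recombined, because $\calx$ is itself a direct sum; concretely, $\Lambda(\calx) \subset \calx$ holds if and only if $\Lambda(H_I^+) \subset \calx$ \emph{and} $\Lambda(H_J) \subset \calx$, the latter using that $H_J \subset \calx$ and that $\calx$ is a cone containing $\pm x$ for $x \in H_J$.

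For the direction (ii) $\Rightarrow$ (i), I would take an arbitrary $x = x_I + x_J \in \calx$ with $x_I \in H_I^+$ and $x_J \in H_J$, and verify that $\Lambda x \in \calx$, i.e.\ that $\langle \Lambda x, e_i \rangle_H \geq 0$ for all $i \in I$. Since $H_J \subset \calx$ and $\calx$ is stable under $\Lambda$ would give nothing circular, the cleaner route is: from $\Lambda^*(H_I^+) \subset H_I^+$ we have $e_i \in H_I^+$ mapped to $\Lambda^* e_i \in H_I^+$, so $\langle \Lambda x, e_i \rangle_H = \langle x, \Lambda^* e_i \rangle_H$. Writing $\Lambda^* e_i = \sum_{k \in I} c_k e_k$ with all $c_k \geq 0$ (this is exactly $\Lambda^* e_i \in H_I^+$), I get $\langle x, \Lambda^* e_i \rangle_H = \sum_{k \in I} c_k \langle x, e_k \rangle_H \geq 0$, because $\langle x, e_k \rangle_H = \langle x_I, e_k \rangle_H \geq 0$ for $k \in I$. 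Thus $\pi_I \Lambda x \in H_I^+$. Combined with $\Lambda(H_J) \subset H_J$ and the decomposition $\Lambda x = \Lambda x_I + \Lambda x_J$, where $\Lambda x_J \in H_J$ contributes nothing to the $I$-coordinates, I conclude $\Lambda x \in H_I^+ \oplus H_J = \calx$.

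For the converse (i) $\Rightarrow$ (ii), the condition $\Lambda(H_J) \subset H_J$ follows by applying $\Lambda(\calx) \subset \calx$ to both $x_J$ and $-x_J$ for $x_J \in H_J$: since both $\pm x_J \in \calx$, we get $\langle \Lambda x_J, e_i \rangle_H \geq 0$ and $\langle \Lambda x_J, e_i \rangle_H = -\langle \Lambda(-x_J), e_i \rangle_H \leq 0$ for every $i \in I$, forcing $\pi_I \Lambda x_J = 0$, i.e.\ $\Lambda x_J \in H_J$. For $\Lambda^*(H_I^+) \subset H_I^+$, fix $i \in I$ and $j \in I$; since $e_j \in H_I^+ \subset \calx$, we have $\Lambda e_j \in \calx$, so $\langle \Lambda e_j, e_i \rangle_H \geq 0$, which reads $\langle e_j, \Lambda^* e_i \rangle_H \geq 0$, i.e.\ the $I$-coordinates of $\Lambda^* e_i$ are nonnegative. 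It remains to see that $\Lambda^* e_i$ has no $J$-component, and this is exactly the adjoint statement of $\Lambda(H_J) \subset H_J$ just proved: for $x_J \in H_J$ we have $\langle \Lambda^* e_i, x_J \rangle_H = \langle e_i, \Lambda x_J \rangle_H = 0$ since $\Lambda x_J \in H_J$ and $e_i \in H_I$. Hence $\Lambda^* e_i \in H_I^+$, and by linearity and closedness of $H_I^+$, $\Lambda^*(H_I^+) \subset H_I^+$.

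The main obstacle, such as it is, is bookkeeping rather than conceptual: one must be careful that $\Lambda^*(H_I^+) \subset H_I^+$ is genuinely about the cone and not merely the subspace $H_I$, so the nonnegativity of all $I$-coordinates of $\Lambda^* e_i$ must be established for every $i$, and one must separately rule out a $J$-component of $\Lambda^* e_i$ (which is where the duality between $\Lambda(H_J) \subset H_J$ and the corresponding adjoint constraint is essential). Once the decomposition $\calx = H_I^+ \oplus H_J$ is exploited to test the two pieces independently, the two implications are symmetric and close cleanly.
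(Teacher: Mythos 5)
Your proof is correct and follows essentially the same route as the paper: both directions rest on the adjoint identity $\la \Lambda x, y \ra_H = \la x, \Lambda^* y \ra_H$ to transfer positivity between $\Lambda$ on $H_I^+$ and $\Lambda^*$ on $H_I^+$, and both obtain $\Lambda(H_J) \subset H_J$ by testing $\pm x_J \in \calx$; your version merely works with basis vectors where the paper uses general cone elements. No gaps.
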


\begin{proof}
(i) $\Rightarrow$ (ii): Suppose there exists $x \in H_J$ with $\Lambda x \in \calx \setminus H_J$. Since $\Lambda x \notin H_J$, there exists $i \in I$ with $\la \Lambda x,e_i \ra_H > 0$. We have $-x \in H_J \subset \calx$, and hence 
\begin{align*}
\la \Lambda(-x),e_i \ra_H = - \la \Lambda x,e_i \ra_H < 0,
\end{align*}
which provides the contradiction $\Lambda(-x) \notin \calx$. Therefore, we have $\Lambda(H_J) \subset H_J$, and hence
\begin{align*}
\la \Lambda^* x,y \ra_H = \la x,\Lambda y \ra_H = 0 \quad \text{for all $x \in H_I$ and all $y \in H_J$,}
\end{align*}
which shows $\Lambda^*(H_I) \subset H_I$. Furthermore, we have
\begin{align*}
\la \Lambda^* x,y \ra_H = \la x,\Lambda y \ra_H \geq 0 \quad \text{for all $x,y \in H_I^+$,}
\end{align*}
showing that $\Lambda^*(H_I^+) \subset H_I^+$.

\noindent (ii) $\Rightarrow$ (i): For all $x,y \in H_I^+$ we have
\begin{align*}
\la \Lambda x,y \ra_H = \la x,\Lambda^* y \ra_H \geq 0,
\end{align*}
and hence we deduce $\Lambda(H_I^+) \subset \calx$. Therefore, for each $x \in \calx$ we obtain
\begin{align*}
\Lambda x = \Lambda x_I + \Lambda x_J \in \calx,
\end{align*}
completing the proof.
\end{proof}

Now, let $\Lambda \in L(H)$ be an isomorphism such that $\Lambda(\calx) = \calx$. We introduce the new mappings $\bar{\mu} : \calx \to H$ and $\bar{\sigma} : \calx \to L_2^+(U_0,H)$ as
\begin{align}\label{def-mu-bar}
\bar{\mu}(y) &:= \Lambda \mu(x), \quad y \in \calx,
\\ \label{def-sigma-bar} \bar{\sigma}(y) &:= \Lambda \sigma(x), \quad y \in \calx,
\end{align}
where $x = \Lambda^{-1}y \in \calx$, and we define the new mapping $\bar{S} : \calx \to L_1^+(H)$ as
\begin{align}\label{def-S-bar} 
\bar{S}(y) &:= \bar{\sigma}(y) \Sigma_W^{\nicefrac{1}{2}} \big( \bar{\sigma}(y) \Sigma_W^{\nicefrac{1}{2}} \big)^*, \quad y \in \calx.
\end{align}
Taking into account (\ref{S-sigma}), it is easy to check that
\begin{align}\label{bar-S-Lambda}
\bar{S}(y) = \Lambda S(x) \Lambda^* = \Lambda S(x)^{\nicefrac{1}{2}} \big( \Lambda S(x)^{\nicefrac{1}{2}} \big)^* \quad \text{for all $y \in \calx$,}
\end{align}
where $x = \Lambda^{-1}y \in \calx$. Note that for a solution $X$ to the SDE (\ref{SDE-affine}) the process $Y := \Lambda X$ is a solution to the SDE (\ref{SDE-affine-Y}) with $y_0 = \Lambda x_0$. The upcoming results show that all relevant properties are still satisfied for the new parameters.

\begin{lemma}\label{lemma-inward-preserved}
The following statements are true:
\begin{enumerate}[(i)]
\item The mapping $\bar{\mu}$ is inward pointing at boundary points of $\calx$.

\item The mapping $\bar{\sigma}$ is parallel to the boundary at boundary points of $\calx$.
\end{enumerate}
\end{lemma}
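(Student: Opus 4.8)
The plan is to show that both the inward-pointing property of $\bar{\mu}$ and the parallel property of $\bar{\sigma}$ are inherited from the corresponding properties of $\mu$ and $\sigma$, purely by exploiting the relation $\Lambda(\calx) = \calx$ established in Lemma \ref{lemma-X-inv-trans}. The key structural fact I would use throughout is the equivalent characterization from Lemma \ref{lemma-X-inv-trans}(ii): since $\Lambda$ is an isomorphism with $\Lambda(\calx) = \calx$, we have $\Lambda^*(H_I^+) \subset H_I^+$ and $\Lambda(H_J) \subset H_J$; moreover, applying the same lemma to $\Lambda^{-1}$ (which also satisfies $\Lambda^{-1}(\calx) = \calx$) yields $(\Lambda^{-1})^*(H_I^+) \subset H_I^+$ as well, so that $\Lambda^*$ actually maps $H_I^+$ \emph{onto} $H_I^+$ bijectively.

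For part (i), I would verify the defining inequality directly. Fix $y \in \calx$ and $\eta \in H_I^+$ with $\la y,\eta \ra_H = 0$, and write $x := \Lambda^{-1} y \in \calx$. Using $\bar{\mu}(y) = \Lambda \mu(x)$ from \eqref{def-mu-bar}, compute
\begin{align*}
\la \bar{\mu}(y),\eta \ra_H = \la \Lambda \mu(x),\eta \ra_H = \la \mu(x),\Lambda^* \eta \ra_H.
\end{align*}
Now set $\tilde{\eta} := \Lambda^* \eta \in H_I^+$; this lies in $H_I^+$ by the remark above. The crucial point is that $x$ is a boundary point tested against $\tilde{\eta}$: indeed
\begin{align*}
\la x,\tilde{\eta} \ra_H = \la \Lambda^{-1}y,\Lambda^* \eta \ra_H = \la \Lambda \Lambda^{-1} y,\eta \ra_H = \la y,\eta \ra_H = 0.
\end{align*}
Hence the inward-pointing property of $\mu$ applies to the pair $(x,\tilde{\eta})$ and gives $\la \mu(x),\tilde{\eta} \ra_H \geq 0$, which is exactly $\la \bar{\mu}(y),\eta \ra_H \geq 0$. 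Since $y$ and $\eta$ were arbitrary, $\bar{\mu}$ is inward pointing.

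For part (ii), the argument is structurally identical. Fix $y \in \calx$ and $\eta \in H_I^+$ with $\la y,\eta \ra_H = 0$, put $x := \Lambda^{-1} y$ and $\tilde{\eta} := \Lambda^* \eta \in H_I^+$, so that $\la x,\tilde{\eta} \ra_H = 0$ as above. From $\bar{\sigma}(y) = \Lambda \sigma(x)$ in \eqref{def-sigma-bar} we obtain $\la \bar{\sigma}(y),\eta \ra_H = \la \sigma(x),\Lambda^* \eta \ra_H = \la \sigma(x),\tilde{\eta} \ra_H$ as operators in $L_2(U_0,\bbr)$, and the parallel property of $\sigma$ at the boundary point $(x,\tilde{\eta})$ forces this to vanish. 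I do not anticipate a genuine obstacle here; the only point requiring care is the bookkeeping that $\Lambda^*$ preserves $H_I^+$ (so that the transformed test vector $\tilde\eta$ is again an admissible boundary direction), which is precisely what Lemma \ref{lemma-X-inv-trans} supplies once applied to both $\Lambda$ and $\Lambda^{-1}$.
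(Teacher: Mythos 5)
Your argument is correct and coincides with the paper's own proof: both set $x = \Lambda^{-1}y$, use Lemma \ref{lemma-X-inv-trans} to get $\Lambda^*\eta \in H_I^+$, verify $\la x,\Lambda^*\eta\ra_H = \la y,\eta\ra_H = 0$, and then invoke the inward-pointing (resp.\ parallel) property of $\mu$ (resp.\ $\sigma$) at the pair $(x,\Lambda^*\eta)$. The only superfluous step is your appeal to $\Lambda^{-1}$ for surjectivity of $\Lambda^*$ on $H_I^+$; the inclusion $\Lambda^*(H_I^+)\subset H_I^+$ from $\Lambda(\calx)\subset\calx$ alone suffices.
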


\begin{proof}
Let $y \in \calx$ and $\eta \in H_I^+$ with $\la y,\eta \ra_H = 0$ be arbitrary. We set $x := \Lambda^{-1} y \in \calx$. By Lemma \ref{lemma-X-inv-trans} we have $\Lambda^* \eta \in H_I^+$. Furthermore, we have
\begin{align*}
\la x, \Lambda^* \eta \ra_H = \la \Lambda x, \eta \ra_H = \la y,\eta \ra_H = 0.
\end{align*}
Therefore, if $\mu$ is inward pointing, then we obtain
\begin{align*}
\la \bar{\mu}(y), \eta \ra_H = \la \Lambda(\mu(x)), \eta \ra_H = \la \mu(x), \Lambda^* \eta \ra_H \geq 0.
\end{align*}
Similarly, if $\sigma$ is parallel, then we obtain
\begin{align*}
\la \bar{\sigma}(y), \eta \ra_H = \la \Lambda(\sigma(x)), \eta \ra_H = \la \sigma(x), \Lambda^* \eta \ra_H = 0,
\end{align*}
finishing the proof.
\end{proof}

Now, we define $\bar{m}_0 \in H$ and $\bar{M} \in L(H)$ as
\begin{align}\label{def-m-bar}
\bar{m}_0 := \Lambda m_0 \quad \text{and} \quad \bar{M} := \Lambda M \Lambda^{-1}.
\end{align}
Then, using (\ref{def-mu-bar}), (\ref{mu-affine}) and (\ref{def-m-bar}) it is easy to check that $\bar{\mu}$ has the affine structure
\begin{align}\label{mu-bar-affine}
\bar{\mu}(y) = \bar{m}_0 + \bar{M}y \quad \text{for all $y \in \calx$.}
\end{align}
Let us decompose $\bar{\mu}$ with respect to $H = H_I \oplus H_J$. We define the affine mappings $\bar{\mu}_{II} : H_I^+ \to H_I$ and $\bar{\mu}_J : \calx \to H_J$ as
\begin{align}\label{def-mu-bar-I}
\bar{\mu}_{II}(y) &:= \bar{m}_{0,I} + \bar{M}_{II} y, \quad y \in H_I^+,
\\ \label{def-mu-bar-J} \bar{\mu}_J(y) &:= \bar{m}_{0,J} + \bar{M}_J y, \quad y \in \calx.
\end{align}
Then we have the decomposition (\ref{mu-decomp}), which easily follows from Lemma \ref{lemma-inward-preserved}, condition (\ref{cond-inward-3}), and since $\pi_I \bar{M} \pi_J = 0$ according to Proposition \ref{prop-inward}. The next result shows that the inward pointing property also transfers to $\bar{\mu}_{II}$.

\begin{lemma}\label{lemma-mII-inward}
The mapping $\bar{\mu}_{II}$ is inward pointing at boundary points of $H_I^+$.
\end{lemma}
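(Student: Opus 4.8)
The plan is to reduce the inward-pointing property of $\bar{\mu}_{II}$ on the boundary of $H_I^+$ to the already-established inward-pointing property of the full drift $\bar{\mu}$ on the boundary of $\calx$ (Lemma \ref{lemma-inward-preserved}(i)), using the decomposition (\ref{mu-decomp}). In analogy with the definition for $\calx$, inward pointing at boundary points of $H_I^+$ means that $\la \bar{\mu}_{II}(y), \eta \ra_H \geq 0$ for every $y \in H_I^+$ and every $\eta \in H_I^+$ with $\la y, \eta \ra_H = 0$, and this is the inequality I would verify.

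First I would fix such $y$ and $\eta$, and regard $y$ as an element of $\calx = H_I^+ \oplus H_J$ whose $H_J$-component vanishes, so that $y_I = y$ and $y_J = 0$. Since $\eta \in H_I$ and $H_I \perp H_J$, the boundary condition $\la y, \eta \ra_H = 0$ continues to hold when $y$ is viewed inside $\calx$. Hence $y$ together with $\eta$ satisfies the hypotheses of the inward-pointing property of $\bar{\mu}$ at boundary points of $\calx$.

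Next I would apply Lemma \ref{lemma-inward-preserved}(i) to obtain $\la \bar{\mu}(y), \eta \ra_H \geq 0$, and then invoke the decomposition (\ref{mu-decomp}), i.e. $\bar{\mu}(y) = \bar{\mu}_{II}(y_I) + \bar{\mu}_J(y) = \bar{\mu}_{II}(y) + \bar{\mu}_J(y)$. Because $\bar{\mu}_J(y) \in H_J$ while $\eta \in H_I^+ \subset H_I$, the orthogonality of $H_I$ and $H_J$ gives $\la \bar{\mu}_J(y), \eta \ra_H = 0$, so that $\la \bar{\mu}(y), \eta \ra_H = \la \bar{\mu}_{II}(y), \eta \ra_H$. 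Combining the two relations yields $\la \bar{\mu}_{II}(y), \eta \ra_H \geq 0$, which is exactly the claim.

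I do not anticipate a genuine obstacle here; the statement is a direct consequence of the decomposition together with the orthogonality of $H_I$ and $H_J$. The only point requiring minor care is the bookkeeping of viewing $y \in H_I^+$ as a boundary point of $\calx$ and checking that the test vector $\eta$, living in $H_I^+$, annihilates the $H_J$-valued component $\bar{\mu}_J(y)$; once this is observed, the full-space inward-pointing property of $\bar{\mu}$ transfers verbatim to $\bar{\mu}_{II}$.
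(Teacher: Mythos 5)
Your proof is correct, and it takes a slightly different route from the paper's. The paper argues at the level of the affine parameters: using Lemma \ref{lemma-inward-preserved} together with the characterization in Proposition \ref{prop-inward}, it extracts the conditions $\bar{m}_0 \in \calx$, $\bar{M}x \in (H_I^+ + \lin\{e_i\}) \oplus H_J$ for $x \in \lin^+\{e_i\}$, and $\bar{M}(H_J) \subset H_J$, projects them onto $H_I$ to obtain $\bar{m}_{0,I} \in H_I^+$ and $\bar{M}_{II}x \in H_I^+ + \lin\{e_i\}$, and then applies Proposition \ref{prop-inward} a second time (in the setting where the index set $J$ is empty) to conclude. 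You instead work directly with the defining inequality: you embed the boundary pair $(y,\eta)$ of $H_I^+$ into $\calx$, invoke Lemma \ref{lemma-inward-preserved}(i) to get $\la \bar{\mu}(y),\eta\ra_H \geq 0$, and eliminate the $H_J$-valued component $\bar{\mu}_J(y)$ from the decomposition (\ref{mu-decomp}) by orthogonality against $\eta \in H_I$. Your argument is more elementary in that it bypasses the parameter characterization entirely and would apply to any drift admitting a decomposition of the form (\ref{mu-decomp}); the paper's version has the mild advantage of producing the explicit parameter conditions for $\bar{\mu}_{II}$, which is the form in which the inward-pointing property is quoted elsewhere. Both arguments are complete, and both ultimately rest on Lemma \ref{lemma-inward-preserved}(i) and the availability of the decomposition (\ref{mu-decomp}), which is established just before the lemma.
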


\begin{proof}
Taking into account (\ref{mu-bar-affine}), by Proposition \ref{prop-inward} we have
\begin{align*}
\bar{m}_0 &\in \calx,
\\ \bar{M} x &\in ( H_I^+ + \lin \{ e_i \} ) \oplus H_J \quad \text{for all $i \in I$ and $x \in \lin^+ \{ e_i \}$,}
\\ \bar{M}(H_J) &\subset H_J.
\end{align*}
Therefore, we have
\begin{align*}
\bar{m}_{0,I} &\in H_I^+,
\\ \bar{M}_{II} x &\in ( H_I^+ + \lin \{ e_i \} ) \quad \text{for all $i \in I$ and $x \in \lin^+ \{ e_i \}$.}
\end{align*}
Hence, taking into account (\ref{def-mu-bar-I}), by Proposition \ref{prop-inward} we deduce that $\bar{\mu}_{II}$ is inward pointing at boundary points of $H_I^+$.
\end{proof}

So far, we have considered a general transformation $\Lambda$, which leaves the state space $\calx$ invariant. Now, we will consider a concrete choice for this transformation, which will provide the announced block diagonal structure (\ref{sigma-decomp}) of the volatility $\bar{\sigma}$. By (\ref{frac-in-l2}) and the Cauchy-Schwarz inequality, the mapping
\begin{align}\label{D-define}
Dx := - \sum_{i \in I_{>0}} \la x,e_i \ra_H \, \frac{S(e_i)_{IJ} \, e_i}{\lambda_i}, \quad x \in H
\end{align}
is a well-defined continuous linear operator $D \in L(H)$. We define $\Lambda \in L(H)$ as 
\begin{align}\label{Lambda-def}
\Lambda := \Id + D.
\end{align}
We require some auxiliary results, before we can prove the block diagonal structure (\ref{sigma-decomp}) of $\bar{\sigma}$ in Proposition \ref{prop-sigma} later on.

\begin{lemma}\label{lemma-Lambda-properties}
The following statements are true:
\begin{enumerate}[(i)]
\item We have $\ran(D) \subset H_J \subset \ker(D)$.

\item We have $\ran(D^*) \subset H_I \subset \ker(D^*)$.

\item $\Lambda$ is an isomorphism with $\Lambda^{-1} = \Id - D$.

\item We have $\Lambda(\calx) = \calx$.

\item We have $\Lambda x = x$ and $\Lambda^{-1}x = x$ for all $x \in H_J$.
\end{enumerate}
\end{lemma}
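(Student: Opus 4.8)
The plan is to read off all five parts from the single structural fact that $D$ maps into $H_J$ while annihilating $H_J$, i.e. statement (i), $\ran(D) \subset H_J \subset \ker(D)$. For the first inclusion, each summand $\la x,e_i \ra_H \, \lambda_i^{-1} S(e_i)_{IJ}\,e_i$ in the definition \eqref{D-define} lies in $H_J$, since $S(e_i)_{IJ}\,e_i = \pi_J S(e_i) e_i \in H_J$ by the block notation; as $H_J$ is closed and the series converges in $H$, the sum $Dx$ lies in $H_J$. For the second inclusion, if $x \in H_J$ then $\la x,e_i \ra_H = 0$ for every $i \in I$ because $e_i \in H_I \perp H_J$, so every coefficient vanishes and $Dx = 0$.

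Statement (ii) I would obtain either by abstract duality — from $\ran(D) \subset H_J$ we get $\ker(D^*) = \ran(D)^\perp \supset H_J^\perp = H_I$, and from $H_J \subset \ker(D)$ we get $\overline{\ran(D^*)} = \ker(D)^\perp \subset H_J^\perp = H_I$ — or, more concretely, by computing the adjoint: pairing $\la Dx,y \ra_H$ gives $D^* y = - \sum_{i \in I_{>0}} \lambda_i^{-1} \la S(e_i)_{IJ}\,e_i, y \ra_H \, e_i$, from which $\ran(D^*) \subset H_I$ is immediate, while $D^* y = 0$ for $y \in H_I$ follows since $S(e_i)_{IJ}\,e_i \in H_J \perp H_I$.

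The remaining parts are quick consequences. The key observation for (iii) is that (i) forces $D^2 = 0$: indeed $Dx \in \ran(D) \subset H_J \subset \ker(D)$, so $D(Dx) = 0$. Hence $(\Id \pm D)(\Id \mp D) = \Id - D^2 = \Id$, so $\Lambda = \Id + D$ is an isomorphism with $\Lambda^{-1} = \Id - D$. Statement (v) is then immediate, since for $x \in H_J$ both $Dx = 0$ and $(-D)x = 0$, whence $\Lambda x = \Lambda^{-1} x = x$. Finally, for (iv) I would invoke Lemma \ref{lemma-X-inv-trans} twice. For $\Lambda(\calx) \subset \calx$ I verify its two conditions: $\Lambda(H_J) \subset H_J$ because $Dx = 0$ on $H_J$, and $\Lambda^*(H_I^+) \subset H_I^+$ because $\Lambda^* = \Id + D^*$ fixes $H_I$ pointwise by (ii), hence fixes $H_I^+$. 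Applying the same lemma to $\Lambda^{-1} = \Id - D$ (with adjoint $\Id - D^*$) gives $\Lambda^{-1}(\calx) \subset \calx$, and the two inclusions combine to $\Lambda(\calx) = \calx$.

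The computations are entirely routine; the only genuine content is recognizing the nilpotency $D^2 = 0$ concealed in (i), which is what simultaneously makes $\Lambda$ invertible with such an explicit inverse and makes both $\Lambda$ and $\Lambda^{-1}$ satisfy the hypotheses of Lemma \ref{lemma-X-inv-trans}. The one analytic point, the well-definedness and boundedness of $D$, has already been secured before the statement via \eqref{frac-in-l2} and Cauchy--Schwarz, so I would simply cite it.
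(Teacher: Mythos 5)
Your proposal is correct and follows essentially the same route as the paper: (i) read off from the definition \eqref{D-define}, (ii) by orthogonality/duality, (iii) via the nilpotency $D^2=0$ extracted from $\ran(D)\subset\ker(D)$, and (v) from $H_J\subset\ker(D)$. The only (minor) divergence is in (iv), where you invoke Lemma \ref{lemma-X-inv-trans} for both $\Lambda$ and $\Lambda^{-1}$, whereas the paper concludes directly from $\ran(D)\subset H_J$ that $\Lambda(\calx)\subset\calx$ and $\Lambda^{-1}(\calx)\subset\calx$, since adding an element of $H_J$ to a point of $\calx=H_I^+\oplus H_J$ stays in $\calx$; both arguments are valid.
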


\begin{proof}
The first statement immediately follows from (\ref{D-define}), and since $H_J^{\perp} = H_I$, we obtain
\begin{align*}
\ran(D^*) \subset \overline{\ran(D^*)} = (\ran(D^*)^{\perp})^{\perp} = \ker(D)^{\perp} \subset H_I \subset \ran(D)^{\perp} = \ker(D)^*.
\end{align*}
Since $\ran(D) \subset \ker(D)$, we also have $D^2 = 0$, which gives us
\begin{align*}
(\Id + D)(\Id - D) = \Id - D^2 = \Id,
\end{align*}
showing that $\Lambda$ is an isomorphism with $\Lambda^{-1} = \Id - D$. Furthermore, taking into account $\ran(D) \subset H_J$, we obtain $\Lambda(\calx) \subset \calx$ and $\Lambda^{-1}(\calx) \subset \calx$, and hence $\Lambda(\calx) = \calx$. Finally, since $H_J \subset \ker(D)$, we have $Dx = 0$ for all $x \in H_J$, and hence $\Lambda x = x$ and $\Lambda^{-1}x = x$ for all $x \in H_J$.
\end{proof}

The following auxiliary result concerns the quantities appearing in the affine structures (\ref{mu-affine}) and (\ref{mu-bar-affine}) of $\mu$ and $\bar{\mu}$.

\begin{lemma}\label{lemma-ass-3}
We have $m_{0,I} = \bar{m}_{0,I}$ and $M_{II} = \bar{M}_{II}$.
\end{lemma}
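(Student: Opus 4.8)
The plan is to prove both identities by direct computation, substituting the explicit forms $\bar{m}_0 = \Lambda m_0$ and $\bar{M} = \Lambda M \Lambda^{-1}$ from (\ref{def-m-bar}), where $\Lambda = \Id + D$ and $\Lambda^{-1} = \Id - D$ by Lemma \ref{lemma-Lambda-properties}(iii). The two structural facts I would exploit are $\ran(D) \subset H_J$ (Lemma \ref{lemma-Lambda-properties}(i)), which gives $\pi_I D = 0$ since $H_J = H_I^{\perp}$, and $M(H_J) \subset H_J$, which is condition (\ref{cond-inward-3}) of Proposition \ref{prop-inward} and is available because $\mu$ is assumed inward pointing.

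For the first identity I would write $\bar{m}_0 = (\Id + D) m_0 = m_0 + D m_0$. Since $\ran(D) \subset H_J$, applying the projection $\pi_I$ annihilates the term $D m_0$, so that $\bar{m}_{0,I} = \pi_I \bar{m}_0 = \pi_I m_0 = m_{0,I}$, as claimed.

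For the second identity I would expand $\bar{M} = (\Id + D) M (\Id - D) = M - MD + DM - DMD$. Because $\pi_I D = 0$, the two terms $DM$ and $DMD$ vanish once $\pi_I$ is applied on the left, leaving $\bar{M}_{II} = \pi_I \bar{M}|_{H_I} = M_{II} - \pi_I M D|_{H_I}$. It then remains to show that the cross-term $\pi_I M D|_{H_I}$ is zero: for $x \in H_I$ we have $Dx \in H_J$ by Lemma \ref{lemma-Lambda-properties}(i), hence $MDx \in M(H_J) \subset H_J$, so $\pi_I M D x = 0$. This yields $\bar{M}_{II} = M_{II}$.

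I expect the only nontrivial point to be the vanishing of the cross-term $\pi_I M D|_{H_I}$, which relies essentially on the invariance $M(H_J) \subset H_J$ supplied by the inward-pointing property; without it the off-diagonal block of $M$ would contaminate the $(I,I)$-block after the transformation by $\Lambda$. Everything else is routine bookkeeping about the ranges of $D$ recorded in Lemma \ref{lemma-Lambda-properties}.
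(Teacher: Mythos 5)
Your proposal is correct and follows essentially the same route as the paper's own proof: expand $\Lambda M \Lambda^{-1} = M + DM - MD - DMD$ and kill the extra terms in the $(I,I)$-block using $\ran(D)\subset H_J$ together with $M(H_J)\subset H_J$ from Proposition \ref{prop-inward}. You have merely spelled out more explicitly why the cross-term $(MD)_{II}$ vanishes, which is exactly the point the paper also invokes.
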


\begin{proof}
By (\ref{def-m-bar}) and Lemma \ref{lemma-Lambda-properties} we have
\begin{align*}
\bar{m}_{0,I} = \pi_I \bar{m}_0 = \pi_I \Lambda m_0 = \pi_I (\Id + D) m_0 = \pi_I m_0 + \pi_I D m_0 = m_{0,I}.
\end{align*}
Furthermore, by (\ref{def-m-bar}) and Lemma \ref{lemma-Lambda-properties} we have
\begin{align*}
\bar{M} = \Lambda M \Lambda^{-1} = (\Id + D) M (\Id - D) = M + DM - MD - DMD.
\end{align*}
By Proposition \ref{prop-inward} we have $M(H_J) \subset H_J$. Therefore, by Lemma \ref{lemma-Lambda-properties} we obtain
\begin{align*}
(DM)_{II} = (MD)_{II} = (DMD)_{II} = 0,
\end{align*}
and hence $M_{II} = \bar{M}_{II}$.
\end{proof}

Now, we consider the dispersion operator $S$. By Proposition \ref{prop-lambda} we have $\la S(e_i)e_i,e_i \ra_H = 0$, and hence
\begin{align}\label{rel-I0}
S(e_i)e_i = 0 \quad \text{for each $i \in I_0 := I \setminus I_{>0}$.}
\end{align}

\begin{lemma}\label{lemma-D-equations}
The following statements are true:
\begin{enumerate}[(i)]
\item We have $D S(x)_I \, \pi_I = - S(x)_J \, \pi_I$ for all $x \in H_I^+$.

\item We have $S(x)_I \, D^* = - S(x)_I \, \pi_J$ for all $x \in H_I^+$.
\end{enumerate}
\end{lemma}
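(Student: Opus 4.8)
The plan is to prove (i) directly and obtain (ii) as its adjoint. First I would note that both sides of (i) are operators $H \to H_J$ that annihilate $H_J$ (because of the trailing $\pi_I$), so it suffices to check the identity on $H_I$. For $\xi \in H_I$ and $x \in H_I^+$ I would record the diagonal action of $S(x)$ on $H_I$: combining the affine form $S(x) = n_0 + Nx$ with $n_0 \xi = 0$ for $\xi \in H_I$ (condition \eqref{cond-parallel-1} of Proposition \ref{prop-parallel}) and with $S(e_k)e_l = 0$ for $k \neq l$ (Proposition \ref{prop-lambda}), one obtains
\begin{align*}
S(x)\xi = \sum_{k \in I} \la x,e_k \ra_H \, \la \xi,e_k \ra_H \, S(e_k)e_k,
\end{align*}
where by \eqref{rel-I0} only the indices $k \in I_{>0}$ contribute. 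Projecting with $\pi_I$ and $\pi_J$ yields the two pieces $\pi_I S(x)\xi = \sum_{k \in I_{>0}} \lambda_k \la x,e_k \ra_H \la \xi,e_k \ra_H \, e_k$ (using $\pi_I S(e_k)e_k = S(e_k)_{II}e_k = \lambda_k e_k$ from Proposition \ref{prop-lambda}) and $\pi_J S(x)\xi = \sum_{k \in I_{>0}} \la x,e_k \ra_H \la \xi,e_k \ra_H \, \pi_J S(e_k)e_k$.

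The heart of the argument is the single identity $\lambda_k D e_k = - \pi_J S(e_k)e_k$ for $k \in I_{>0}$, which is immediate from the definition \eqref{D-define} of $D$, since there $D e_k = - S(e_k)_{IJ} e_k / \lambda_k = - \pi_J S(e_k)e_k / \lambda_k$. Applying the bounded operator $D$ termwise to the series for $\pi_I S(x)\xi$ and inserting this identity gives
\begin{align*}
D \pi_I S(x)\xi = \sum_{k \in I_{>0}} \la x,e_k \ra_H \la \xi,e_k \ra_H \, \lambda_k D e_k = - \sum_{k \in I_{>0}} \la x,e_k \ra_H \la \xi,e_k \ra_H \, \pi_J S(e_k)e_k = - \pi_J S(x)\xi,
\end{align*}
which is exactly statement (i) restricted to $H_I$, hence (i) in view of $S(x)_I = \pi_I S(x)$ and $S(x)_J = \pi_J S(x)$.

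For (ii) I would take adjoints in (i). Each $S(x)$ is self-adjoint (as $S(x) \in L_1^+(H)$) and $\pi_I,\pi_J$ are self-adjoint, so the adjoint of $D \pi_I S(x) \pi_I = - \pi_J S(x)\pi_I$ reads $\pi_I S(x)\pi_I D^* = - \pi_I S(x)\pi_J$. By Lemma \ref{lemma-Lambda-properties} we have $\ran(D^*) \subset H_I$, so $\pi_I D^* = D^*$ and the middle projection disappears, leaving $\pi_I S(x) D^* = - \pi_I S(x)\pi_J$, i.e. $S(x)_I D^* = - S(x)_I \pi_J$, which is (ii).

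The main obstacle is bookkeeping rather than any genuine difficulty. One must respect the paper's block convention, in which $S(e_k)_{IJ}e_k = \pi_J S(e_k)e_k$ lands in $H_J$, and restrict every sum to $I_{>0}$ so that the division by $\lambda_k$ inside $D$ is legitimate; the indices in $I_0$ cause no harm precisely because $S(e_k)e_k = 0$ there by \eqref{rel-I0}. The only analytic point is the termwise application of $D$ to the convergent series for $\pi_I S(x)\xi$, which is routine by continuity of $D$.
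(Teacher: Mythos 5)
Your proof is correct and follows essentially the same route as the paper: establish the single identity $\lambda_k D e_k = -\pi_J S(e_k)e_k$ from the definition of $D$, apply $D$ termwise to the diagonal representation of $S(x)_I$ on $H_I$ (with the $I_0$ indices dropping out via \eqref{rel-I0}), and obtain (ii) by taking adjoints using self-adjointness of $S(x)$ and $\ran(D^*) \subset H_I$. The only cosmetic difference is that you spell out the bilinear expansion $S(x)\xi = \sum_{k} \la x,e_k \ra_H \la \xi,e_k \ra_H \, S(e_k)e_k$ explicitly, which the paper uses implicitly when it resums the series into $-S(x)_J\,\xi$.
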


\begin{proof}
By (\ref{D-define}) we have
\begin{align*}
D e_i = - \frac{S(e_i)_J \, e_i}{\lambda_i} \quad \text{for all $i \in I_{>0}$.}
\end{align*}
Therefore, by Proposition \ref{prop-lambda} and (\ref{rel-I0}), for all $x \in H_I^+$ and $\xi \in H_I$ we obtain
\begin{align*}
&D S(x)_I \, \xi = D \bigg( \sum_{i \in I_{>0}} \lambda_i \la x,e_i \ra_H \la e_i,\xi \ra_H \, e_i \bigg) = -\sum_{i \in I_{>0}} \la x,e_i \ra_H \la e_i,\xi \ra_H S(e_i)_J \, e_i
\\ &= -\sum_{i \in I} \la x,e_i \ra_H \la e_i,\xi \ra_H S(e_i)_J \, e_i = - S \bigg( \sum_{i \in I} \la x,e_i \ra_H \, e_i \bigg)_J \, \sum_{i \in I} \la \xi,e_i \ra_H \, e_i = -S(x)_J \, \xi.
\end{align*}
Therefore, we have
\begin{align*}
D \pi_I S(x) \pi_I = - \pi_J S(x) \pi_I.
\end{align*}
By Lemma \ref{lemma-Lambda-properties} we have $\ran(D^*) \subset H_I$. Therefore, taking adjoints we obtain
\begin{align*}
S(x)_I \, D^* = \pi_I S(x) \pi_I D^* = (D \pi_I S(x) \pi_I)^* = - \pi_I S(x) \pi_J = -S(x)_I \, \pi_J,
\end{align*}
completing the proof.
\end{proof}

Now, we introduce $\bar{n}_0 \in L_1^+(H)$ and $\bar{N} \in L(H,L_1(H))$ as
\begin{align}\label{def-n-bar}
\bar{n}_0 := \Lambda n_0 \Lambda^* \quad \text{and} \quad \bar{N}y := \Lambda Nx \Lambda^* \quad \text{for all $y \in H$,}
\end{align}
where $x = \Lambda^{-1} y \in H$. Then $\bar{S}$ has the affine structure
\begin{align}\label{S-bar-affine}
\bar{S}(y) = \bar{n}_0 + \bar{N} y \quad \text{for all $y \in \calx$,}
\end{align}
which is easily checked by using (\ref{bar-S-Lambda}), (\ref{S-affine}) and (\ref{def-n-bar}).

\begin{lemma}\label{lemma-D-act}
For each $y \in \calx$ the following statements are true:
\begin{enumerate}[(i)]
\item We have $\bar{S}(y) \pi_I = S(x)_I \, \pi_I$, where $x = \Lambda^{-1} y \in \calx$, and hence $\bar{S}(y)(H_I) \subset H_I$.

\item We have $\bar{S}(y)(H_J) \subset H_J$.
\end{enumerate}
\end{lemma}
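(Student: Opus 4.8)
The plan is to start from the representation (\ref{bar-S-Lambda}), which gives $\bar{S}(y) = \Lambda S(x) \Lambda^*$ with $x = \Lambda^{-1} y \in \calx$, and to exploit that $\Lambda = \Id + D$ and $\Lambda^* = \Id + D^*$ together with the range and kernel inclusions of Lemma \ref{lemma-Lambda-properties} and the identities of Lemma \ref{lemma-D-equations}.

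For statement (i), I would first observe that $H_I \subset \ker(D^*)$ forces $D^* \pi_I = 0$, so that $\Lambda^* \pi_I = \pi_I$ and therefore $\bar{S}(y) \pi_I = \Lambda S(x) \pi_I$. Next I would decompose $S(x) \pi_I = S(x)_I \, \pi_I + S(x)_J \, \pi_I$ and note that $S(x)_J \, \pi_I$ has range contained in $H_J \subset \ker(D)$, so that applying $D$ annihilates it. For the remaining term I would invoke Lemma \ref{lemma-D-equations}(i); here the subtlety is that this lemma requires its argument to lie in $H_I^+$, whereas $x = \Lambda^{-1} y$ only lies in $\calx$. This is resolved by Proposition \ref{prop-lambda}(i), which gives $S(x) = S(x_I)$ with $x_I \in H_I^+$, so that the lemma applies with argument $x_I$ and yields $D S(x)_I \, \pi_I = - S(x)_J \, \pi_I$. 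Assembling the pieces, the two $S(x)_J \, \pi_I$ contributions cancel and one is left with $\bar{S}(y) \pi_I = S(x)_I \, \pi_I$. Since $S(x)_I = \pi_I S(x)$ has range in $H_I$, this immediately gives $\bar{S}(y)(H_I) \subset H_I$.

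For statement (ii), rather than repeating the computation with $\pi_J$, I would use that $\bar{S}(y)$ is self-adjoint --- clear from (\ref{def-S-bar}), or equivalently because $\Lambda S(x) \Lambda^*$ is self-adjoint --- together with part (i) and the orthogonal decomposition $H = H_I \oplus H_J$, i.e.\ $H_J = H_I^{\perp}$. Indeed, for $\eta \in H_J$ and any $\xi \in H_I$ one has $\la \bar{S}(y) \eta, \xi \ra_H = \la \eta, \bar{S}(y) \xi \ra_H = 0$, because $\bar{S}(y) \xi \in H_I$ by part (i) and $\eta \perp H_I$; hence $\bar{S}(y) \eta \in H_I^{\perp} = H_J$.

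The only genuine obstacle is the bookkeeping in part (i): tracking carefully which projections are killed by $D$ and by $D^*$, and ensuring that the argument fed into Lemma \ref{lemma-D-equations} lies in $H_I^+$. Once the reduction $S(x) = S(x_I)$ is in place, everything collapses to the range and kernel inclusions of Lemma \ref{lemma-Lambda-properties} and a single cancellation, after which part (ii) is a soft consequence of self-adjointness.
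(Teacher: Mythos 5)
Your proof is correct. Part (i) follows essentially the same route as the paper: expand $\bar{S}(y)=\Lambda S(x)\Lambda^*$, kill $D^*$ on $H_I$ via Lemma \ref{lemma-Lambda-properties}, split $S(x)=S(x)_I+S(x)_J$, annihilate the $H_J$-valued piece with $D$, and cancel via Lemma \ref{lemma-D-equations}(i). You are in fact slightly more careful than the paper on one point: Lemma \ref{lemma-D-equations} is stated for arguments in $H_I^+$, while here $x=\Lambda^{-1}y$ only lies in $\calx$, and you correctly insert the reduction $S(x)=S(x_I)$ from Proposition \ref{prop-lambda}(i) before applying it; the paper's proof uses the lemma without comment on this. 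For part (ii) you genuinely diverge: the paper repeats the direct computation with $\eta\in H_J$, expanding $\Lambda S(x)(\eta+D^*\eta)$ and invoking Lemma \ref{lemma-D-equations}(ii) to cancel $S(x)_I\,\eta+S(x)_I\,D^*\eta$, whereas you deduce $\bar{S}(y)(H_J)\subset H_J$ from part (i), the self-adjointness of $\bar{S}(y)$ (clear from the form $AA^*$ in (\ref{def-S-bar})), and $H_J=H_I^{\perp}$. Your route is shorter and avoids Lemma \ref{lemma-D-equations}(ii) altogether for this lemma; the paper's direct computation has the minor advantage of exhibiting the explicit cancellation, but your soft argument is the more robust one, since it would survive any modification of $D$ that preserves part (i) and self-adjointness.
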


\begin{proof}
Let $y \in \calx$ be arbitrary, and set $x := \Lambda^{-1} y \in \calx$. Furthermore, let $\eta \in H_I$ be arbitrary. By Lemma \ref{lemma-Lambda-properties} we have $D^* \eta = 0$. Therefore, by (\ref{bar-S-Lambda}) and Lemma \ref{lemma-D-equations} we obtain
\begin{align*}
\bar{S}(y)\eta = \Lambda S(x) \Lambda^* \eta &= (\Id + D) S(x) (\eta + D^* \eta)
\\ &= (\Id + D)S(x)\eta = (\Id + D)(S(x)_I \, \eta + S(x)_J \, \eta)
\\ &= (\Id + D)S(x)_I \, \eta + S(x)_J \, \eta 
\\ &= S(x)_I \, \eta + D S(x)_I \, \eta + S(x)_J \, \eta = S(x)_I \, \eta.
\end{align*}
Now, let $\eta \in H_J$ be arbitrary. Then by (\ref{bar-S-Lambda}) we have
\begin{align*}
\bar{S}(y)\eta = \Lambda S(x) \Lambda^* \eta &= (\Id + D) S(x) (\eta + D^* \eta)
\\ &= (\Id + D) S(x)_I (\eta + D^* \eta) + S(x)_J(\eta + D^* \eta)
\\ &= S(x)_I \, \eta + S(x)_I \, D^* \eta + D S(x)_I (\eta + D^* \eta) + S(x)_J (\eta + D^* \eta).
\end{align*}
Note that by Lemma \ref{lemma-Lambda-properties} we have
\begin{align*}
D S(x)_I(\eta + D^* \eta) + S(x)_J (\eta + D^* \eta) \eta \in H_J.
\end{align*}
Furthermore, by Lemma \ref{lemma-D-equations} we obtain
\begin{align*}
S(x)_I \, \eta + S(x)_I \, D^* \eta = 0,
\end{align*}
and hence $\bar{S}(y)\eta \in H_J$, completing the proof.
\end{proof}

Now, we are ready to analyze the structure of $\bar{S}$ and its square root. As the next result shows, $\bar{S}$ has a block diagonal structure and $\bar{S}_{II}$ has a diagonal structure.\\[2mm]

\begin{proposition}\label{prop-block-diagonal}
We have that 
\begin{enumerate}[(i)]
\item for all $y \in \calx$ and $\eta \in H$,
\begin{align}\label{S-block-1}
\bar{S}(y)\eta = \bar{S}(y_I)_{II} \, \eta_I + \bar{S}(y_I)_{JJ} \, \eta_J,
\end{align}

\item for all $y \in H_I^+$ and $\eta \in H_I$,
\begin{align}\label{S-block-2}
\bar{S}(y)_{II} \, \eta = \sum_{i \in I} \lambda_i \la y,e_i \ra_H \la e_i,\eta \ra_H \, e_i.
\end{align}
\end{enumerate}
\end{proposition}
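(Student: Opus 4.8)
The plan is to derive both statements directly from Lemma~\ref{lemma-D-act} together with Proposition~\ref{prop-lambda}, after first recording the crucial reduction that $\bar{S}(y)$ depends on $y$ only through its projection $y_I$. Writing $x = \Lambda^{-1}y = (\Id - D)y$ and recalling from Lemma~\ref{lemma-Lambda-properties} that $\ran(D) \subset H_J$, we get $\pi_I x = \pi_I y - \pi_I D y = y_I$, so that by Proposition~\ref{prop-lambda}(i) we have $S(x) = S(x_I) = S(y_I)$. Hence (\ref{bar-S-Lambda}) gives $\bar{S}(y) = \Lambda S(y_I) \Lambda^*$, and the same computation applied to $y_I \in H_I^+ \subset \calx$ yields $\bar{S}(y_I) = \Lambda S(y_I) \Lambda^* = \bar{S}(y)$; thus $\bar{S}(y) = \bar{S}(y_I)$ for every $y \in \calx$.

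For the block structure (\ref{S-block-1}), I would invoke Lemma~\ref{lemma-D-act}, which states that $\bar{S}(y)$ maps $H_I$ into $H_I$ and $H_J$ into $H_J$. Consequently, for $\eta = \eta_I + \eta_J$ the image $\bar{S}(y)\eta_I$ lies in $H_I$ and $\bar{S}(y)\eta_J$ lies in $H_J$, so the off-diagonal blocks vanish and $\bar{S}(y)\eta = \bar{S}(y)_{II}\,\eta_I + \bar{S}(y)_{JJ}\,\eta_J$. Replacing $\bar{S}(y)$ by $\bar{S}(y_I)$ via the reduction above turns this into exactly (\ref{S-block-1}).

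For the diagonal formula (\ref{S-block-2}), I would fix $y \in H_I^+$. By Lemma~\ref{lemma-D-act}(i) we have $\bar{S}(y)\pi_I = S(x)_I\,\pi_I$ with $x = \Lambda^{-1}y$, and since $\bar{S}(y)(H_I) \subset H_I$ this means $\bar{S}(y)_{II} = S(x)_{II}$. As $y \in H_I$, the reduction gives $x_I = y$ and hence $S(x) = S(y)$, so $\bar{S}(y)_{II} = S(y)_{II}$. Applying the diagonal representation (\ref{op-repr-general-main-part}) from Proposition~\ref{prop-lambda}(v) to $S(y)_{II}$ then yields (\ref{S-block-2}).

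The computations here are short, so there is no genuinely hard step; the one point that requires care—and which I would single out as the crux—is the reduction $\bar{S}(y) = \bar{S}(y_I)$ together with the accompanying identity $(\Lambda^{-1}y)_I = y_I$. These rest on the fact that $\ran(D) \subset H_J$ (so that $\Lambda^{-1}$ does not disturb the $I$-component) combined with the invariance $S = S \circ \pi_I$ from Proposition~\ref{prop-lambda}(i); getting this bookkeeping right is what ensures that the $\lambda_i$ appearing in (\ref{S-block-2}) are the \emph{untransformed} quantities associated with $S$ rather than transformed ones.
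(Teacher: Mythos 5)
Your proposal is correct and follows essentially the same route as the paper: reduce to $\bar{S}(y)=\bar{S}(y_I)$, read off the block structure from Lemma \ref{lemma-D-act}, and obtain the diagonal formula from Lemma \ref{lemma-D-act}(i) together with the representation (\ref{op-repr-general-main-part}) and the fact that $\Lambda^{-1}$ does not alter the $I$-component. The only (harmless) difference is in the reduction step: the paper re-applies Proposition \ref{prop-lambda}(i) to the barred system (justified via the affine structure (\ref{S-bar-affine}) and Lemma \ref{lemma-inward-preserved}), whereas you derive $\bar{S}(y)=\bar{S}(y_I)$ directly from the conjugation formula (\ref{bar-S-Lambda}) and $\ran(D)\subset H_J$ — a slightly more elementary bookkeeping that also sidesteps the minor point that $\Lambda^{-1}y$ need not lie in $H_I^+$ when applying (\ref{op-repr-general-main-part}).
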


\begin{proof}
The mapping $\bar{S}$ has the affine structure (\ref{S-bar-affine}), and by Lemma \ref{lemma-inward-preserved} the mapping $\bar{\sigma}$ is parallel to the boundary at boundary points of $\calx$. Therefore, by Proposition \ref{prop-lambda} we have
\begin{align*}
\bar{S}(y) = \bar{S}(y_I) \quad \text{for all $y \in \calx$.}
\end{align*}
Now, let $y \in \calx$ and $\eta \in H$ be arbitrary. By Lemma \ref{lemma-D-act} we have
\begin{align*}
\bar{S}(y)\eta = \bar{S}(y_I) \eta = \bar{S}(y_I) \eta_I + \bar{S}(y_I) \eta_J = \bar{S}(y_I)_{II} \, \eta_I + \bar{S}(y_I)_{JJ} \, \eta_J.
\end{align*}
Now, let $y \in H_I^+$ be arbitrary. By Lemma \ref{lemma-Lambda-properties}, for each $i \in I$ we have
\begin{align*}
\la \Lambda^{-1} y, e_i \ra_H = \la y - Dy, e_i \ra_H = \la y,e_i \ra_H - \la y, D^* e_i \ra = \la y,e_i \ra_H.
\end{align*}
Therefore, by Lemma \ref{lemma-D-act} and Proposition \ref{prop-lambda}, for all $y \in H_I^+$ and $\eta \in H_I$ we obtain
\begin{align*}
\bar{S}(y)_{II} \, \eta = S(\Lambda^{-1} y)_{II} \, \eta = \sum_{i \in I} \lambda_i \la \Lambda^{-1}y,e_i \ra_H \la e_i,\eta \ra_H \, e_i = \sum_{i \in I} \lambda_i \la y,e_i \ra_H \la e_i,\eta \ra_H \, e_i,
\end{align*}
completing the proof.
\end{proof}

Let us analyze the structure of the volatility $\bar{\sigma}$. Recall that $U=H$, that the covariance operator $\Sigma_W \in L_1^{++}(U)$ has a diagonal structure along $( e_k )_{k \in \bbn}$, and that for each $x \in \calx$ the operator $\sigma(x) \Sigma_W^{\nicefrac{1}{2}}$ is self-adjoint. Therefore, we have
\begin{align}\label{sigma-bar-S-bar}
\bar{\sigma}(y) = \bar{S}(y)^{\nicefrac{1}{2}} \Sigma_W^{-\nicefrac{1}{2}} \quad \text{for all $y \in \calx$,}
\end{align}
which easily follows from (\ref{bar-S-Lambda}) and (\ref{sigma-S}). Recall that $U_0 = \Sigma_W^{\nicefrac{1}{2}}(U)$ is a separable Hilbert space with inner product
\begin{align}\label{inner-product-U0}
\la u,v \ra_{U_0} := \big\la \Sigma_W^{-\nicefrac{1}{2}} u, \Sigma_W^{-\nicefrac{1}{2}} v \big\ra_U, \quad u,v \in U_0.
\end{align}
The system $( g_k )_{k \in \bbn}$ given by
\begin{align}\label{ONB-gk}
g_k = \Sigma_W^{\nicefrac{1}{2}} e_k, \quad k \in \bbn
\end{align}
is an orthonormal basis of $U_0$. Now,  define
\begin{align*}
U_{I,0} := \Sigma_W^{\nicefrac{1}{2}}(U_I) \subset U_I \quad \text{, } \quad U_{J,0} := \Sigma_W^{\nicefrac{1}{2}}(U_J) \subset U_J,
\end{align*}
and define the mappings $\bar{\sigma}_{II} : H_I^+ \to L_2(U_{I,0},H_I)$ and $\bar{\sigma}_{JJ} : H_I^+ \to L_2(U_{J,0},H_J)$ as
\begin{align}\label{def-bar-sigma-I}
\bar{\sigma}_{II}(y) &:= \bar{S}(y)_{II}^{\nicefrac{1}{2}} \, \Sigma_{W,II}^{-\nicefrac{1}{2}}, \quad y \in H_I^+,
\\ \label{def-bar-sigma-J} \bar{\sigma}_{JJ}(y) &:= \bar{S}(y)_{JJ}^{\nicefrac{1}{2}} \, \Sigma_{W,JJ}^{-\nicefrac{1}{2}}, \quad y \in H_I^+.
\end{align}
Mainly as a consequence of the block diagonal structure of $\bar{S}$, we can now prove the announced block diagonal structure of $\bar{\sigma}$. In addition, we obtain that $\bar{\sigma}_{II}$ has a diagonal structure.

\begin{proposition}\label{prop-sigma}
The following statements are true:
\begin{enumerate}[(i)]
\item We have the representation (\ref{sigma-decomp}).

\item We have the representation
\begin{align}\label{repr-sigma-block}
\bar{\sigma}_{II}(y)u = \sum_{i \in I} \sqrt{\lambda_i \la y,e_i \ra_H } \, \la g_i, u \ra_{U_0} \, e_i \quad \text{for all $y \in H_I^+$ and $u \in U_{I,0}$.}
\end{align}

\item If $\lambda \in \ell^1(I)$ and $g_i = \sqrt{\lambda_i} e_i$ for all $i \in I$, then we have the representation
\begin{align*}
\bar{\sigma}_{II}(y)u = \sum_{i \in I} \sqrt{\la y,e_i \ra_H } \, \la e_i, u \ra_U \, e_i \quad \text{for all $y \in H_I^+$ and $u \in U_{I,0}$.}
\end{align*}

\end{enumerate}
\end{proposition}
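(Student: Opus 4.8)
The plan is to read everything off the identity $\bar{\sigma}(y) = \bar{S}(y)^{\nicefrac{1}{2}} \Sigma_W^{-\nicefrac{1}{2}}$ from (\ref{sigma-bar-S-bar}), combining the block diagonal structure of $\bar{S}$ established in Proposition \ref{prop-block-diagonal} with the diagonal form of $\Sigma_W$. The guiding observation is that both factors are block diagonal with respect to the splitting $H = H_I \oplus H_J$, so their product is as well, and that the $II$-block is already diagonal in the basis $(e_i)_{i \in I}$.

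For part (i) I would first note that by Proposition \ref{prop-block-diagonal} the operator $\bar{S}(y)$ leaves both $H_I$ and $H_J$ invariant, i.e.\ it is block diagonal with respect to $H = H_I \oplus H_J$; being self-adjoint and non-negative, its square root $\bar{S}(y)^{\nicefrac{1}{2}}$ is then the direct sum of the square roots of the two diagonal blocks, so that its $II$-block equals $\bar{S}(y)_{II}^{\nicefrac{1}{2}}$ and its $JJ$-block equals $\bar{S}(y)_{JJ}^{\nicefrac{1}{2}}$. Since $\Sigma_W$ is diagonal along $(e_k)_{k \in \bbn}$, it is likewise block diagonal, hence so is $\Sigma_W^{-\nicefrac{1}{2}}$, with blocks $\Sigma_{W,II}^{-\nicefrac{1}{2}}$ and $\Sigma_{W,JJ}^{-\nicefrac{1}{2}}$. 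The product of two block diagonal operators is block diagonal, so using $\bar{S}(y) = \bar{S}(y_I)$ and inserting the definitions (\ref{def-bar-sigma-I}) and (\ref{def-bar-sigma-J}) yields exactly the decomposition (\ref{sigma-decomp}).

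For parts (ii) and (iii) the remaining work is a direct computation. By Proposition \ref{prop-block-diagonal} the block $\bar{S}(y)_{II}$ acts diagonally on $(e_i)_{i \in I}$ with eigenvalues $\lambda_i \la y,e_i \ra_H \ge 0$, so its square root satisfies $\bar{S}(y)_{II}^{\nicefrac{1}{2}} e_i = \sqrt{\lambda_i \la y,e_i \ra_H}\, e_i$. Writing $\Sigma_W e_k = \gamma_k e_k$ with $\gamma_k > 0$, we have $g_i = \Sigma_W^{\nicefrac{1}{2}} e_i = \sqrt{\gamma_i}\, e_i$, and the only point requiring care is the expansion of $\Sigma_{W,II}^{-\nicefrac{1}{2}} u$: for $u \in U_{I,0}$ the definition (\ref{inner-product-U0}) gives $\la g_i, u \ra_{U_0} = \la \Sigma_W^{-\nicefrac{1}{2}} g_i, \Sigma_W^{-\nicefrac{1}{2}} u \ra_U = \la e_i, \Sigma_W^{-\nicefrac{1}{2}} u \ra_U$, whence $\Sigma_W^{-\nicefrac{1}{2}} u = \sum_{i \in I} \la g_i, u \ra_{U_0}\, e_i$. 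Applying $\bar{S}(y)_{II}^{\nicefrac{1}{2}}$ term by term then produces (\ref{repr-sigma-block}). For part (iii), the hypothesis $g_i = \sqrt{\lambda_i}\, e_i$ forces $\gamma_i = \lambda_i$ (so all $\lambda_i > 0$, and $\lambda \in \ell^1(I)$ is exactly what makes $\Sigma_W$ trace class with these eigenvalues); then $\la g_i, u \ra_{U_0} = \lambda_i^{-\nicefrac{1}{2}} \la e_i, u \ra_U$, and substituting into (\ref{repr-sigma-block}) cancels the factor $\sqrt{\lambda_i}$, leaving the asserted expression.

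The only genuinely delicate step is the interchange of taking a square root and restricting to a block in parts (i) and (ii). This is legitimate precisely because Proposition \ref{prop-block-diagonal} guarantees that $\bar{S}(y)$ is block diagonal, so the continuous functional calculus splits as a direct sum over the two blocks and the $II$-block of $\bar{S}(y)^{\nicefrac{1}{2}}$ really is $\bar{S}(y)_{II}^{\nicefrac{1}{2}}$; once this is granted, the rest is bookkeeping with the diagonal operator $\Sigma_W$ and the inner product (\ref{inner-product-U0}).
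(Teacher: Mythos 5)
Your proposal is correct and follows essentially the same route as the paper: both read the decomposition off the identity $\bar{\sigma}(y) = \bar{S}(y)^{\nicefrac{1}{2}} \Sigma_W^{-\nicefrac{1}{2}}$ together with the block diagonal and diagonal structure of $\bar{S}$ from Proposition \ref{prop-block-diagonal} and the inner product (\ref{inner-product-U0}). The only difference is that you spell out why the square root of the block diagonal operator splits into the square roots of its blocks, a step the paper uses implicitly, and you expand $\Sigma_W^{-\nicefrac{1}{2}}u$ in the basis before applying $\bar{S}(y)_{II}^{\nicefrac{1}{2}}$ rather than converting $\la e_i, \Sigma_{W,II}^{-\nicefrac{1}{2}} u \ra_H$ to $\la g_i,u\ra_{U_0}$ afterwards; these are cosmetic reorderings of the same computation.
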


\begin{proof}
Let $u \in U_0$ be arbitrary. Note that
\begin{align*}
\Sigma_W^{-\nicefrac{1}{2}} u = \Sigma_{W,II}^{-\nicefrac{1}{2}} \, u_I + \Sigma_{W,JJ}^{-\nicefrac{1}{2}} \, u_J
\end{align*}
is the decomposition of $\Sigma_W^{-\nicefrac{1}{2}} u$ according to $U = U_I \oplus U_J$. Therefore, by (\ref{sigma-bar-S-bar}) and Proposition \ref{prop-block-diagonal}, for each $y \in \calx$ we have
\begin{align*}
\bar{\sigma}(y)u &= \bar{S}(y)^{\nicefrac{1}{2}} \Sigma_W^{-\nicefrac{1}{2}} u
= \bar{S}(y_I)_{II}^{\nicefrac{1}{2}} \, \Sigma_{W,II}^{-\nicefrac{1}{2}} \, u_I + \bar{S}(y_I)_{JJ}^{\nicefrac{1}{2}} \, \Sigma_{W,JJ}^{-\nicefrac{1}{2}} \, u_J 
\\ &= \bar{\sigma}_{II}(y_I)u_I + \bar{\sigma}_{JJ}(y_I)u_J.
\end{align*}
Furthermore, by Proposition \ref{prop-block-diagonal} and (\ref{inner-product-U0}), for all $y \in H_I^+$ and $u \in U_{I,0}$ we have
\begin{align*}
\bar{\sigma}_{II}(y)u &= \bar{S}(y)_{II}^{\nicefrac{1}{2}} \, \Sigma_{W,II}^{-\nicefrac{1}{2}} \, u = \sum_{i \in I} \sqrt{\lambda_i \la y,e_i \ra_H} \, \la e_i, \Sigma_{W,II}^{-\nicefrac{1}{2}} \, u \ra_H \, e_i
\\ &= \sum_{i \in I} \sqrt{\lambda_i \la y,e_i \ra_H } \, \la \Sigma_{W,II}^{\nicefrac{1}{2}} e_i, u \ra_{U_0} \, e_i = \sum_{i \in I} \sqrt{\lambda_i \la y,e_i \ra_H} \, \la g_i, u \ra_{U_0} \, e_i.
\end{align*}
If $\lambda \in \ell^1(I)$ and $g_i = \sqrt{\lambda_i} e_i$ for all $i \in I$, then by (\ref{inner-product-U0}) we obtain
\begin{align*}
\bar{\sigma}_{II}(y)u &= \sum_{i \in I} \sqrt{\lambda_i \la y,e_i \ra_H} \, \la g_i, u \ra_{U_0} \, e_i = \sum_{i \in I} \lambda_i \sqrt{\la y,e_i \ra_H} \, \la e_i, u \ra_{U_0} \, e_i
\\ &= \sum_{i \in I} \lambda_i \sqrt{\la y,e_i \ra_H } \, \la \Sigma_{W,II}^{-\nicefrac{1}{2}} \, e_i, \Sigma_{W,II}^{-\nicefrac{1}{2}} \, u \ra_U \, e_i = \sum_{i \in I} \sqrt{\la y,e_i \ra_H } \, \la e_i, u \ra_U \, e_i,
\end{align*}
completing the proof.
\end{proof}

By virtue of the decompositions (\ref{mu-decomp}) and (\ref{sigma-decomp}) of $\bar{\mu}$ and $\bar{\sigma}$ we can express the transformed SDE (\ref{SDE-affine-Y}) by the two coupled SDEs (\ref{SDE-affine-Y-I}) and (\ref{SDE-affine-Y-J}). Hence our task is essentially reduced to solve the SDE (\ref{SDE-affine-Y-I}). The aim of the following auxiliary results is to show that all conditions for an application of our existence result for weak solutions (Theorem \ref{thm-SDE-existence}) are fulfilled. Recall that by virtue of Lemma \ref{lemma-mII-inward} the drift $\bar{\mu}_{II}$ is inward pointing at boundary points of $H_I^+$. Furthermore, the volatility $\bar{\sigma}_{II}$ is parallel to the boundary at boundary points of $H_I^+$, which is a consequence of the diagonal structure (\ref{repr-sigma-block}) from Proposition \ref{prop-sigma}. Moreover, by (\ref{def-mu-bar-I}) the drift $\bar{\mu}_{II} : H_I^+ \to H_I$ satisfies the linear growth condition and is Lipschitz continuous.

\begin{lemma}\label{lemma-lin-growth-sigma}
The mapping $\bar{\sigma}_{II} : H_I^+ \to L_2(U_{I,0},H_I)$ satisfies the linear growth condition, and for all $y,z \in H_I^+$ we have
\begin{align*}
\| \bar{\sigma}_{II}(y) - \bar{\sigma}_{II}(z) \|_{L_2(U_{I,0},H_I)}^2 \leq \| \lambda \|_{\ell^2(I)} \| y - z \|_H.
\end{align*}
\end{lemma}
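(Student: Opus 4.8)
The plan is to work directly from the explicit diagonal representation \eqref{repr-sigma-block} of $\bar{\sigma}_{II}$ established in Proposition \ref{prop-sigma}, and to compute all Hilbert--Schmidt norms against the orthonormal basis $( g_i )_{i \in I}$ of $U_{I,0}$ furnished by \eqref{ONB-gk}. First I would record that, since $\la g_i,g_j \ra_{U_0} = \delta_{ij}$, for each $j \in I$ one has
$$
\bar{\sigma}_{II}(y) g_j = \sqrt{\lambda_j \la y,e_j \ra_H}\, e_j, \qquad \text{whence} \qquad \| \bar{\sigma}_{II}(y) g_j \|_H^2 = \lambda_j \la y,e_j \ra_H \geq 0,
$$
the nonnegativity being a consequence of $y \in H_I^+$. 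Summing over the basis then gives
$$
\| \bar{\sigma}_{II}(y) \|_{L_2(U_{I,0},H_I)}^2 = \sum_{j \in I} \| \bar{\sigma}_{II}(y) g_j \|_H^2 = \sum_{j \in I} \lambda_j \la y,e_j \ra_H,
$$
and the linear growth bound follows from Cauchy--Schwarz together with $\lambda \in \ell^2(I)$ (Proposition \ref{prop-lambda}): the right-hand side is bounded by $\| \lambda \|_{\ell^2(I)} \| y \|_H \leq \| \lambda \|_{\ell^2(I)} ( 1 + \| y \|_H^2 )$.

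For the difference estimate the diagonal form yields, for each $j \in I$,
$$
\big( \bar{\sigma}_{II}(y) - \bar{\sigma}_{II}(z) \big) g_j = \sqrt{\lambda_j} \Big( \sqrt{\la y,e_j \ra_H} - \sqrt{\la z,e_j \ra_H} \Big) e_j,
$$
so that
$$
\| \bar{\sigma}_{II}(y) - \bar{\sigma}_{II}(z) \|_{L_2(U_{I,0},H_I)}^2 = \sum_{j \in I} \lambda_j \Big( \sqrt{\la y,e_j \ra_H} - \sqrt{\la z,e_j \ra_H} \Big)^2.
$$
The only non-routine ingredient is the elementary inequality $( \sqrt{a} - \sqrt{b} )^2 \leq | a - b |$ valid for all $a,b \geq 0$, which holds because $| \sqrt{a} - \sqrt{b} | \leq \sqrt{a} + \sqrt{b}$. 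Applying it with $a = \la y,e_j \ra_H$ and $b = \la z,e_j \ra_H$ bounds each summand by $\lambda_j | \la y - z, e_j \ra_H |$, and a final Cauchy--Schwarz step in $\ell^2(I)$ gives
$$
\sum_{j \in I} \lambda_j | \la y - z, e_j \ra_H | \leq \| \lambda \|_{\ell^2(I)} \Big( \sum_{j \in I} | \la y - z, e_j \ra_H |^2 \Big)^{\nicefrac{1}{2}} \leq \| \lambda \|_{\ell^2(I)} \, \| y - z \|_H,
$$
which is precisely the asserted bound.

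I do not anticipate a genuine obstacle: once the representation \eqref{repr-sigma-block} is available the argument reduces to two applications of Cauchy--Schwarz and one elementary real-variable inequality. The single point requiring care is the justification of the termwise Hilbert--Schmidt computation, namely that $( g_i )_{i \in I}$ is indeed an orthonormal basis of $U_{I,0}$ so that the Hilbert--Schmidt norm may be evaluated by summing $\| \bar{\sigma}_{II}(\cdot) g_i \|_H^2$ over $i \in I$; this is exactly \eqref{ONB-gk} restricted to the index set $I$, using $U = H$ and the diagonal structure of $\Sigma_W$ from Assumption \ref{ass-ex}.
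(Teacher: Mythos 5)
Your proof is correct and follows essentially the same route as the paper: both evaluate the Hilbert--Schmidt norms against the orthonormal basis $(g_i)_{i \in I}$ of $U_{I,0}$, and the difference estimate is identical (diagonal representation, the inequality $(\sqrt{a}-\sqrt{b})^2 \le |a-b|$, then Cauchy--Schwarz). The only minor divergence is in the linear-growth half, where the paper bounds $\|\bar{\sigma}_{II}(y)\|_{L_2(U_{I,0},H_I)}^2 = \|\bar{S}(y)_{II}\|_{L_1(H_I)}$ by $\|\bar{N}_{II}\|_{L(H_I,L_1(H_I))}\|y\|_H$, whereas you obtain the constant $\|\lambda\|_{\ell^2(I)}$ directly from the diagonal representation \eqref{repr-sigma-block} via Cauchy--Schwarz; both bounds are valid and yield the linear growth condition.
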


\begin{proof}
The mapping $\bar{S}$ has the affine structure (\ref{S-bar-affine}), and by Lemma \ref{lemma-inward-preserved} the mapping $\bar{\sigma}$ is parallel to the boundary at boundary points of $\calx$. Therefore, by Proposition \ref{prop-lambda} we have
\begin{align*}
\bar{S}(y)\eta = \bar{N}(y) \eta \quad \text{for all $y \in H_I^+$ and $\eta \in H_I$.}
\end{align*}
Since $(g_i)_{i \in I}$ is an orthonormal basis of $U_{I,0}$, by (\ref{def-bar-sigma-I}) for each $y \in H_I^+$ we obtain
\begin{align*}
\| \bar{\sigma}_{II}(y) \|_{L_2(U_{I,0},H_I)}^2 &= \sum_{i \in I} \| \bar{\sigma}_{II}(y) g_i \|_{H}^2 = \sum_{i \in I} \| \bar{\sigma}_{II}(y) \Sigma_{W,II}^{\nicefrac{1}{2}} e_i \|_{H}^2 = \big\| \bar{\sigma}_{II}(y) \Sigma_{W,II}^{\nicefrac{1}{2}} \big\|_{L_2(H_I)}^2 
\\ &= \big\| \bar{S}(y)_{II}^{\nicefrac{1}{2}} \big\|_{L_2(H_I)}^2 = \| \bar{S}(y)_{II} \|_{L_1(H_I)} \leq \| \bar{N}_{II} \|_{L(H_I,L_1(H_I))} \| y \|_H,
\end{align*}
where the operator $\bar{N}_{II} \in L(H_I,L_1(H_I))$ is given by $\bar{N}_{II} \eta = (\bar{N} \eta)_{II}$ for $\eta \in H_I$. This proves that $\bar{\sigma}_{II}$ satisfies the linear growth condition. Furthermore, by Proposition \ref{prop-block-diagonal} for all $y,z \in H_I^+$ we have
\begin{align*}
&\| \bar{\sigma}_{II}(y) - \bar{\sigma}_{II}(z) \|_{L_2(U_{I,0},H_I)}^2 = \big\| ( \bar{\sigma}_{II}(y) - \bar{\sigma}_{II}(z) ) \Sigma_{W,II}^{\nicefrac{1}{2}} \big\|_{L_2(H_I)}^2
\\ &= \big\| \bar{S}(y)_{II}^{\nicefrac{1}{2}} - \bar{S}(z)_{II}^{\nicefrac{1}{2}} \big\|_{L_2(H_I)}^2 = \sum_{i \in I} \Big( \sqrt{\lambda_i \la y,e_i \ra_H} - \sqrt{\lambda_i \la z,e_i \ra_H} \Big)^2
\\ &\leq \sum_{i \in I} | \lambda_i \la y - z,e_i \ra_H | \leq \| \lambda \|_{\ell^2(I)} \| y - z \|_H,
\end{align*}
finishing the proof.
\end{proof}
  
We now define the retracted subspace with compact embedding. Since $\lambda \in \ell^2(I)$, there exists a sequence $\nu = (\nu_i)_{i \in I} \subset (0,\infty)$ with $\nu_i \to 0$ such that $( \lambda_i / \nu_i )_{i \in I} \in \ell^2(I)$, which gives rise to the compact linear operator $T \in K^{++}(H_I)$ with representation (\ref{T-compact-repr}) and the retracted subspace $H_{I,0} := T(H_I)$ according to Lemma \ref{lemma-zusammenziehen}. We also recall the notations $H_{I,0}^+ := T(H_I^+)$ and $\calx_0 := H_{I,0}^+ \oplus H_J$. 

\begin{remark}\label{rem-I-finite}
If $I$ is finite, then we simply take $\nu_i := 1$ for all $i \in I$. In this case, the operator $T$ is the identity operator, and we have $H_{I,0} = H_I$, $H_{I,0}^+ = H_I^+$ and $\calx_0 = \calx$. 
\end{remark}

The following two results show that the drift and the volatility appearing in the SDE (\ref{SDE-affine-Y-I}) satisfy the linear growth condition with respect to the norm $\| \cdot \|_{H_{I,0}}$.

\begin{proposition}\label{prop-H0-mu}
We have $\bar{\mu}_{II}(H_{I,0}^+) \subset H_{I,0}$, and $\bar{\mu}_{II}|_{H_{I,0}^+} : H_{I,0}^+ \to H_{I,0}$ satisfies the linear growth condition with respect to $\| \cdot \|_{H_{I,0}}$.
\end{proposition}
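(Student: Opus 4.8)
The plan is to start from the affine representation $\bar{\mu}_{II}(y) = \bar{m}_{0,I} + \bar{M}_{II}\,y$ established in (\ref{def-mu-bar-I}), and to invoke Lemma \ref{lemma-ass-3}, which identifies $\bar{m}_{0,I} = m_{0,I}$ and $\bar{M}_{II} = M_{II}$; hence it suffices to analyze $\bar{\mu}_{II}(y) = m_{0,I} + M_{II}\,y$. The structural fact I will exploit throughout is that, by construction (Lemma \ref{lemma-zusammenziehen}), $T$ acts as an isometric isomorphism from $(H_I,\|\cdot\|_H)$ onto $(H_{I,0},\|\cdot\|_{H_{I,0}})$; concretely, $\|Tx\|_{H_{I,0}} = \|x\|_H$ for all $x \in H_I$, equivalently $\|z\|_{H_{I,0}} = \|T^{-1}z\|_H$ for $z \in H_{I,0} = T(H_I)$.

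For the invariance claim $\bar{\mu}_{II}(H_{I,0}^+) \subset H_{I,0}$, I would argue termwise. First, $m_{0,I} \in H_{I,0}^+ \subset H_{I,0}$ is exactly part of assumption (\ref{ass-prin-drift}). Second, for $y \in H_{I,0}^+ \subset H_{I,0}$, write $y = Tx$ with $x \in H_I$; the commutation relation $M_{II}T = T M_{II}$ from (\ref{ass-prin-drift}) gives $M_{II}\,y = M_{II}Tx = T(M_{II}x) \in T(H_I) = H_{I,0}$. Adding the two contributions yields $\bar{\mu}_{II}(y) = m_{0,I} + M_{II}\,y \in H_{I,0}$.

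For the linear growth estimate, I would again use the commutation to transport the boundedness of $M_{II} \in L(H_I)$ from $H_I$ to $H_{I,0}$. For $y = Tx \in H_{I,0}^+$, the identity $T^{-1}M_{II}\,y = M_{II}T^{-1}\,y = M_{II}x$ together with the isometry of $T$ yields
\begin{align*}
\|M_{II}\,y\|_{H_{I,0}} = \|T^{-1}M_{II}\,y\|_H = \|M_{II}T^{-1}y\|_H \leq \|M_{II}\|_{L(H_I)}\,\|T^{-1}y\|_H = \|M_{II}\|_{L(H_I)}\,\|y\|_{H_{I,0}}.
\end{align*}
Combining this with the constant contribution of $m_{0,I}$, one obtains
\begin{align*}
\|\bar{\mu}_{II}(y)\|_{H_{I,0}} \leq \|m_{0,I}\|_{H_{I,0}} + \|M_{II}\|_{L(H_I)}\,\|y\|_{H_{I,0}} \leq C\big(1 + \|y\|_{H_{I,0}}\big),
\end{align*}
with $C := \max\{\|m_{0,I}\|_{H_{I,0}},\,\|M_{II}\|_{L(H_I)}\}$, which is the asserted linear growth with respect to $\|\cdot\|_{H_{I,0}}$.

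The only genuinely load-bearing ingredient, and hence the point to verify with care, is the commutation relation $M_{II}T = T M_{II}$: it is precisely what guarantees that $M_{II}$ maps the retracted subspace $H_{I,0}$ into itself and, via the isometry of $T$, that its $H_{I,0}$-operator norm is controlled by its $H_I$-operator norm. Without commutation neither the invariance nor the growth bound would be available, since $M_{II}y$ need not lie in the strictly smaller range $T(H_I)$. I note that the stronger summability condition (\ref{trace-cond-drift}) is not needed for this proposition; it enters only later, for pathwise uniqueness.
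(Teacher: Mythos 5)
Your proof is correct and follows essentially the same route as the paper: the paper invokes Lemma \ref{lemma-function-commute} (whose proof is exactly the identity $M_{II}|_{H_{I,0}} = T M_{II} T^{-1}$ combined with the isometry of $T$ from Lemma \ref{lemma-zusammenziehen}), whereas you inline that argument directly for the linear operator $M_{II}$. Both rely on the same two ingredients, namely Lemma \ref{lemma-ass-3} and the conditions in (\ref{ass-prin-drift}), and your closing remark that (\ref{trace-cond-drift}) is not needed here is also accurate.
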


\begin{proof}
By (\ref{ass-prin-drift}) and Lemma \ref{lemma-ass-3} we have $\bar{m}_{0,I} \in H_{I,0}^+$ and $\bar{M}_{II} T = T \bar{M}_{II}$. Hence, by Lemma \ref{lemma-function-commute} we have $\bar{M}_{II}(H_{I,0}) \subset H_{I,0}$, and $\bar{M}_{II}|_{H_{I,0}} \in L(H_{I,0})$ with respect to $\| \cdot \|_{H_{I,0}}$. Therefore, taking into account (\ref{def-mu-bar-I}) we have $\bar{\mu}_{II}(H_{I,0}^+) \subset H_{I,0}$ and the linear growth condition.
\end{proof}

Recalling the representation (\ref{T-compact-repr}), the system $(f_i)_{i \in I}$ given by
\begin{align*}
f_i = T e_i = \nu_i e_i, \quad i \in I
\end{align*}
is an orthonormal basis of $H_{I,0}$. Also recall that system $( g_k )_{k \in \bbn}$ given by (\ref{ONB-gk}) is an orthonormal basis of $U_0$. In view of the upcoming result, we emphasize that the spaces $H_{I,0}$ and $U_{I,0}$ have to be distinguished, although we have $H = U$. Indeed, by definition we have $H_{I,0} = T(H_I)$, where $T$ is given by (\ref{T-compact-repr}), and we have $U_{I,0} = \Sigma_W^{\nicefrac{1}{2}}(H_I)$.

\begin{proposition}\label{prop-H0-sigma}
The following statements are true:
\begin{enumerate}[(i)]
\item For all $y \in H_{I,0}^+$ and all $u \in U_{I,0}$ we have $\bar{\sigma}_{II}(y)u \in H_{I,0}$.

\item For all $y \in H_{I,0}^+$ we have $\bar{\sigma}_{II}(y) \in L_2(U_{I,0},H_{I,0})$ with representation
\begin{align*}
\bar{\sigma}_{II}(y)u = \sum_{i \in I} \sqrt{\frac{\lambda_i}{\nu_i}} \sqrt{\la y,f_i \ra_{H_0}} \, \la g_i,u \ra_{U_0} \, f_i, \quad u \in U_{I,0}.
\end{align*}

\item The mapping $\bar{\sigma}_{II}|_{H_{I,0}^+} : H_{I,0}^+ \to L_2(U_{I,0},H_{I,0})$ satisfies the linear growth condition with respect to $\| \cdot \|_{H_{I,0}}$.
\end{enumerate}
\end{proposition}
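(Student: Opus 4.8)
The plan is to transport the representation (\ref{repr-sigma-block}) from Proposition \ref{prop-sigma}, which is written in the ambient inner product $\la \cdot,\cdot \ra_H$ and the basis $(e_i)_{i \in I}$, to the intrinsic structure of the retracted space $H_{I,0}$, written in $\la \cdot,\cdot \ra_{H_{I,0}}$ and the basis $(f_i)_{i \in I}$. First I would record the single identity that drives everything: for every $y \in H_{I,0}$ and $i \in I$,
\begin{align*}
\la y,e_i \ra_H = \nu_i \, \la y,f_i \ra_{H_{I,0}}.
\end{align*}
Writing $y = Tx$ with $x = T^{-1}y \in H_I$, the representation (\ref{T-compact-repr}) of $T$ gives $\la y,e_i \ra_H = \la Tx,e_i \ra_H = \nu_i \la x,e_i \ra_H$, while by definition of the norm on $H_{I,0}$ (Lemma \ref{lemma-zusammenziehen}) the operator $T$ is an isometric isomorphism $H_I \to H_{I,0}$ and hence preserves inner products, so that $\la y,f_i \ra_{H_{I,0}} = \la Tx,Te_i \ra_{H_{I,0}} = \la x,e_i \ra_H$. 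The same computation shows that $(f_i)_{i \in I}$ is orthonormal in $H_{I,0}$, and, since $y \in H_{I,0}^+ = T(H_I^+)$ forces $x \in H_I^+$, that $\la y,f_i \ra_{H_{I,0}} \ge 0$; this non-negativity is what legitimises the square roots appearing below.

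Substituting $e_i = \nu_i^{-1} f_i$ and the inner-product identity into (\ref{repr-sigma-block}), I obtain for $y \in H_{I,0}^+$ and $u \in U_{I,0}$
\begin{align*}
\bar{\sigma}_{II}(y)u = \sum_{i \in I} \sqrt{\lambda_i \la y,e_i \ra_H}\, \la g_i,u \ra_{U_0}\, e_i = \sum_{i \in I} \frac{\sqrt{\lambda_i \nu_i}}{\nu_i}\, \sqrt{\la y,f_i \ra_{H_{I,0}}}\, \la g_i,u \ra_{U_0}\, f_i,
\end{align*}
and since $\sqrt{\lambda_i \nu_i}/\nu_i = \sqrt{\lambda_i/\nu_i}$, this is precisely the representation claimed in (ii). As every summand is a scalar multiple of $f_i \in H_{I,0}$, statement (i) follows as soon as the series converges in $H_{I,0}$, which is guaranteed by the Hilbert--Schmidt estimate below.

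For the remaining assertions of (ii) and for (iii) I would compute the Hilbert--Schmidt norm relative to the orthonormal bases $(g_k)_{k \in I}$ of $U_{I,0}$ and $(f_i)_{i \in I}$ of $H_{I,0}$. Evaluating the representation on $g_k$ and using orthonormality of the $g_i$ yields $\bar{\sigma}_{II}(y)g_k = \sqrt{\lambda_k/\nu_k}\, \sqrt{\la y,f_k \ra_{H_{I,0}}}\, f_k$, whence $\| \bar{\sigma}_{II}(y)g_k \|_{H_{I,0}}^2 = (\lambda_k/\nu_k)\, \la y,f_k \ra_{H_{I,0}}$ and therefore
\begin{align*}
\| \bar{\sigma}_{II}(y) \|_{L_2(U_{I,0},H_{I,0})}^2 = \sum_{k \in I} \frac{\lambda_k}{\nu_k}\, \la y,f_k \ra_{H_{I,0}} \le \Big\| \big( \lambda_i/\nu_i \big)_{i \in I} \Big\|_{\ell^2(I)}\, \| y \|_{H_{I,0}},
\end{align*}
by the Cauchy--Schwarz inequality in $\ell^2(I)$ together with $\la y,f_k \ra_{H_{I,0}} \ge 0$ and Parseval's identity. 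Since $(\lambda_i/\nu_i)_{i \in I} \in \ell^2(I)$ by the choice of $\nu$, the right-hand side is finite, which simultaneously establishes $\bar{\sigma}_{II}(y) \in L_2(U_{I,0},H_{I,0})$ (completing (i) and (ii)) and the linear growth bound with respect to $\| \cdot \|_{H_{I,0}}$ asserted in (iii).

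The only genuinely delicate point is the bookkeeping in the first step: one must keep the ambient inner product $\la \cdot,\cdot \ra_H$ and the retracted inner product $\la \cdot,\cdot \ra_{H_{I,0}}$ strictly apart and correctly track the factors $\nu_i$ arising when passing from $(e_i)$ to $(f_i)$. Once the identity $\la y,e_i \ra_H = \nu_i \la y,f_i \ra_{H_{I,0}}$ and the sign $\la y,f_i \ra_{H_{I,0}} \ge 0$ are secured, the rest is a direct substitution and a single Cauchy--Schwarz estimate; the fact that $\lambda_i/\nu_i$ (rather than $\lambda_i$) appears is exactly what makes the hypothesis $(\lambda_i/\nu_i)_{i \in I} \in \ell^2(I)$ the natural sufficient condition.
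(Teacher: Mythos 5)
Your proposal is correct and follows essentially the same route as the paper: convert $\la y,e_i \ra_H$ into $\nu_i \la y,f_i \ra_{H_{I,0}}$ via Lemma \ref{lemma-zusammenziehen}, substitute into the representation (\ref{repr-sigma-block}) to obtain the formula in (ii), and then bound $\sum_{i \in I} (\lambda_i/\nu_i) \la y,f_i \ra_{H_{I,0}}$ by Cauchy--Schwarz using $(\lambda_i/\nu_i)_{i \in I} \in \ell^2(I)$. The only cosmetic difference is that the paper verifies (i) through the membership criterion (\ref{repr-H0}) while you deduce it from convergence of the orthogonal series, which amounts to the same computation.
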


\begin{proof}
Let $y \in H_{I,0}^+$ and $u \in U_{I,0}$ be arbitrary. By Proposition \ref{prop-sigma} and Lemma \ref{lemma-zusammenziehen} we have
\begin{align*}
\bar{\sigma}_{II}(y)u &= \sum_{i \in I} \sqrt{\lambda_i \la y,e_i \ra_H } \, \la g_i, u \ra_{U_0} \, e_i
\\ &= \sum_{i \in I} \sqrt{\lambda_i \nu_i \la y,f_i \ra_{H_{I,0}} } \, \la g_i, u \ra_{U_0} \, e_i = \sum_{i \in I} \sqrt{\frac{\lambda_i}{\nu_i}} \sqrt{\la y,f_i \ra_{H_{I,0}}} \la g_i,u \ra_{U_0} \, f_i.
\end{align*}
Therefore, since $( \lambda_i / \nu_i )_{i \in I} \in \ell^2(I)$ we obtain
\begin{align*}
\sum_{i \in I} \frac{1}{\nu_i^2} | \la \bar{\sigma}_{II}(y)u, e_i \ra_{H} |^2 = \sum_{i \in I} \frac{\lambda_i}{\nu_i} \la y,f_i \ra_{H_{I,0}} | \la g_i,u \ra_{U_0} |^2 \leq \bigg\| \bigg( \frac{\lambda_i}{\nu_i} \bigg)_{i \in I} \bigg\|_{\ell^2(I)} \| y \|_{H_{I,0}} \| u \|_{U_0}^2 < \infty,
\end{align*}
and by Lemma \ref{lemma-zusammenziehen} it follows that $\bar{\sigma}_{II}(y)u \in H_{I,0}$. Recall that $(g_i)_{i \in I}$ is an orthonormal basis of $U_{I,0}$. Hence, by Lemma \ref{lemma-zusammenziehen} we obtain
\begin{align*}
\| \bar{\sigma}_{II}(y) \|_{L_2(U_{I,0},H_{I,0})}^2 = \sum_{i \in I} \| \bar{\sigma}_{II}(y) g_i \|_{H_{I,0}}^2 = \sum_{i \in I} \frac{\lambda_i}{\nu_i} \la y,f_i \ra_{H_{I,0}} \leq \bigg\| \bigg( \frac{\lambda_i}{\nu_i} \bigg)_{i \in I} \bigg\|_{\ell^2(I)} \| y \|_{H_{I,0}},
\end{align*}
proving that $\bar{\sigma}_{II}(y) \in L_2(U_{I,0},H_{I,0})$. Moreover, it follows that $\bar{\sigma}_{II}|_{H_{I,0}^+}$ satisfies the linear growth condition with respect to $\| \cdot \|_{H_{I,0}}$.
\end{proof}

The following two results provide the existence of weak solutions and pathwise uniqueness of solutions for the affine SDE (\ref{SDE-affine}), which we require in order to apply the Yamada-Watanabe theorem. Note that the drift $\bar{\mu}_J$ in the SDE (\ref{SDE-affine-Y-J}) is Lipschitz continuous, which easily follows by taking into account (\ref{def-mu-bar-J}).

\begin{proposition}\label{prop-affine-existence}
Suppose that condition (\ref{ass-prin-drift}) is fulfilled. Then for each probability measure $\nu$ on $(\calx_0,\ccB(\calx_0))$ there exists a weak solution to the affine SDE (\ref{SDE-affine}) such that $\nu = \bbp \circ X_0$.
\end{proposition}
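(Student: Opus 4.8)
The plan is to reduce the affine SDE (\ref{SDE-affine}) to the decoupled system (\ref{SDE-affine-Y-I})--(\ref{SDE-affine-Y-J}) by means of the isomorphism $\Lambda = \Id + D$ from (\ref{Lambda-def}), to obtain a weak solution of the $H_{I,0}^+$-valued equation (\ref{SDE-affine-Y-I}) from our existence theorem for weak solutions, to solve the linear $H_J$-valued equation (\ref{SDE-affine-Y-J}) explicitly, and finally to transform back. First I would record the reduction. By Lemma \ref{lemma-Lambda-properties} the operator $\Lambda$ is an isomorphism with $\Lambda(\calx) = \calx$, and since $\ran(D) \subset H_J$ and $H_J \subset \ker(D)$ one checks at once that $\Lambda(\calx_0) = \calx_0$ and $\Lambda^{-1}(\calx_0) = \calx_0$. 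If $Y$ is a weak solution of (\ref{SDE-affine-Y}) with $Y_0 \sim \Lambda_* \nu$, the push-forward of $\nu$ under $\Lambda$, then $X := \Lambda^{-1} Y$ is, by (\ref{def-mu-bar})--(\ref{def-sigma-bar}), a weak solution of (\ref{SDE-affine}) with $X_0 = \Lambda^{-1} Y_0 \sim \nu$ and paths in $\calx_0$. Hence it suffices to construct $Y$, and by the decompositions (\ref{mu-decomp}) and (\ref{sigma-decomp}) this amounts to solving the coupled system (\ref{SDE-affine-Y-I})--(\ref{SDE-affine-Y-J}).

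Next I would treat (\ref{SDE-affine-Y-I}), where all hypotheses of the weak existence result Theorem \ref{thm-SDE-existence} have been verified in the preceding results: $H_{I,0}^+$ is the closed convex cone inside the retracted subspace $H_{I,0} = T(H_I)$, which embeds compactly into $H_I$; the drift $\bar{\mu}_{II}$ is inward pointing at the boundary of $H_I^+$ (Lemma \ref{lemma-mII-inward}) and maps $H_{I,0}^+$ into $H_{I,0}$ with linear growth in $\| \cdot \|_{H_{I,0}}$ (Proposition \ref{prop-H0-mu}); and the volatility $\bar{\sigma}_{II}$ is parallel to the boundary and maps $H_{I,0}^+$ into $L_2(U_{I,0},H_{I,0})$ with linear growth in $\| \cdot \|_{H_{I,0}}$ (Proposition \ref{prop-H0-sigma}). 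Theorem \ref{thm-SDE-existence} therefore yields, on some stochastic basis carrying a Wiener process $W$, a weak solution $Y_I$ of (\ref{SDE-affine-Y-I}) whose initial law is the prescribed $H_{I,0}^+$-marginal $(\pi_I)_* \Lambda_* \nu$ and which takes its values in $H_{I,0}^+$.

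It then remains to adjoin the $J$-component and transform back. With $Y_I$ fixed, equation (\ref{SDE-affine-Y-J}) reads $dY_{J,t} = (\bar{m}_{0,J} + \bar{M}_{JI} Y_{I,t} + \bar{M}_{JJ} Y_{J,t})\,dt + \bar{\sigma}_{JJ}(Y_{I,t})\,dW_t$, where $\bar{M}_{IJ} = 0$ was used (Proposition \ref{prop-inward} together with $\bar{M}(H_J) \subset H_J$). This is a linear inhomogeneous $H_J$-valued SDE with the given adapted forcing term $\bar{m}_{0,J} + \bar{M}_{JI} Y_{I,\cdot}$ and the given adapted integrand $\bar{\sigma}_{JJ}(Y_{I,\cdot})$; its unique solution is delivered by the variation-of-constants formula with the semigroup $(e^{t \bar{M}_{JJ}})_{t \ge 0}$ on $H_J$, for any $\calf_0$-measurable initial value $Y_{J,0}$. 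To realize the joint initial law I would enlarge the basis to carry a variable $Y_{J,0}$ whose conditional law given $Y_{I,0}$ is a regular version of the conditional of $\Lambda_* \nu$ on $H_J$ given its $H_{I,0}^+$-component; since $Y_{J,0}$ is adjoined only to $\calf_0$ and, conditionally on $Y_{I,0}$, drawn independently of $W$, the process $W$ remains a Wiener process for the enlarged filtration and $(Y_{I,0},Y_{J,0}) \sim \Lambda_* \nu$. Setting $Y := Y_I + Y_J$ gives a weak solution of (\ref{SDE-affine-Y}) with $Y_0 \sim \Lambda_* \nu$ and paths in $\calx_0$, and $X := \Lambda^{-1} Y$ is the desired weak solution.

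The genuinely hard analytic content --- weak existence on the infinite-dimensional cone, where neither coercivity nor a compactness of bounded sets is at hand --- is entirely absorbed into Theorem \ref{thm-SDE-existence} via the compact-embedding / retracted-subspace device, so here it is merely invoked. The one point demanding care is the coupling of the initial distribution across the $I$- and $J$-components: one must install the correct conditional law of $Y_{J,0}$ given $Y_{I,0}$ on an enlarged space while preserving the Wiener-process property of $W$, which is exactly what the conditional-independence construction above secures.
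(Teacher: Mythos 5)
Your proposal is correct and follows essentially the same route as the paper: transform by $\Lambda$, obtain a weak solution of the $I$-equation from Theorem \ref{thm-SDE-existence} with initial law the $H_{I,0}^+$-marginal of $\Lambda_*\nu$, solve the linear $J$-equation strongly, couple the initial components via a disintegration kernel on an enlarged basis (the paper realizes this as the product space $\Omega\times H_J$ with measure $\bar K\otimes\bbp$), and map back by $\Lambda^{-1}$. The only differences are cosmetic (your explicit variation-of-constants formula for $Y_J$, and a transposed index convention in $\bar M_{JI}$ relative to the paper's Section 3 notation).
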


\begin{proof}
Let $\nu$ be a probability measure on $(\calx_0,\ccB(\calx_0))$. Using Lemma \ref{lemma-Lambda-properties}, we have $\Lambda(\calx_0) = \calx_0$. Hence, setting $\bar{\nu} := \nu \circ \Lambda$, that is $\bar{\nu}(B) = \nu(\Lambda^{-1}B)$ for all $B \in \ccB(\calx_0)$, defines another probability measure on $(\calx_0,\ccB(\calx_0))$. Let $\bar{\nu}_I$ be the probability measure on $(H_{I,0}^+,\ccB(H_{I,0}^+))$ given by $\bar{\nu}_I(B) = \bar{\nu}(B \times H_{J})$ for all $B \in \ccB(H_{I,0})$. In view of Lemma \ref{lemma-lin-growth-sigma} and Propositions \ref{prop-H0-mu}, \ref{prop-H0-sigma} we may apply Theorem \ref{thm-SDE-existence}, which provides a weak solution $(Y_I,W)$ to the affine SDE (\ref{SDE-affine-Y-I}) on some stochastic basis $\bbb$ such that $\bar{\nu}_I = \bbp \circ Y_{I,0}$. By desintegration, there exists a stochastic kernel $K$ from $(H_{I,0}^+,\ccB(H_{I,0}^+))$ into $(H_{J},\ccB(H_{J}))$ such that $\bar{\nu} = K \otimes \bar{\nu}_I$. We define the probability kernel $\bar{K}$ from $(\Omega,\calf_0)$ into $(H_{J},\ccB(H_{J}))$ as
\begin{align*}
\bar{K}(\omega,B) := K(Y_{I,0}(\omega),B) \quad \text{for all $\omega \in \Omega$ and $B \in \ccB(H_{J})$.}
\end{align*}
Furthermore, we define the new stochastic basis
\begin{align*}
\widetilde{\bbb} := \big( \widetilde{\Omega}, \widetilde{\calf}, (\widetilde{\calf}_t)_{t \in \bbr_+}, \widetilde{\bbp} \big) := \big( \Omega \times H_{J}, \calf \otimes \ccB(H_{J}), (\calf_t \otimes \ccB(H_{J}))_{t \in \bbr_+} , \bar{K} \otimes \bbp \big),
\end{align*}
and we define the $\widetilde{\calf}_0$-measurable random variable $Y_0 : \widetilde{\Omega} \to \calx_0$ as
\begin{align*}
Y_0(\omega) := \big( Y_{I,0}(\omega_1),Y_{J,0}(\omega_2) \big), \quad \omega \in \widetilde{\Omega},
\end{align*}
where $Y_{J,0}(\omega_2) := \omega_2$. Then it is easy to verify that $\widetilde{\bbp} \circ Y_0 = \bar{\nu}$. There exists a strong solution $Y_J$ to the affine SDE (\ref{SDE-affine-Y-J}) on the stochastic basis $\widetilde{\bbb}$ with initial condition $Y_{J,0}$. Setting $Y := (Y_I,Y_J)$, this gives us a weak solution $(Y,W)$ to the affine SDE (\ref{SDE-affine-Y}) such that $\widetilde{\bbp} \circ Y_0 = \bar{\nu}$. Now, setting $X := \Lambda^{-1} Y$ we deduce that $(X,W)$ is a weak solution to the affine SDE (\ref{SDE-affine}). Moreover, we have $\widetilde{\bbp} \circ X_0 = \nu$, completing the proof.
\end{proof}

\begin{proposition}\label{prop-affine-unique}
Suppose that condition (\ref{trace-cond-drift}) is fulfilled. Then we have pathwise uniqueness with starting points in $\calx$ for the affine SDE (\ref{SDE-affine}).
\end{proposition}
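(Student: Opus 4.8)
The plan is to deduce pathwise uniqueness for \eqref{SDE-affine} from the infinite-dimensional version of the Yamada-Watanabe pathwise uniqueness criterion provided in Appendix \ref{app-SDE} (our adaptation of \cite{Yamada-Watanabe-1971}), after passing to the transformed equation. First I would observe that pathwise uniqueness is invariant under the isomorphism $\Lambda = \Id + D$: if $(X,W)$ and $(X',W)$ are two solutions of \eqref{SDE-affine} on the same stochastic basis with $X_0 = X_0'$, then $Y := \Lambda X$ and $Y' := \Lambda X'$ solve the transformed SDE \eqref{SDE-affine-Y} with $Y_0 = Y_0'$, and since $\Lambda$ is a bounded isomorphism (Lemma \ref{lemma-Lambda-properties}) we have $X = X'$ if and only if $Y = Y'$. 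Hence it suffices to prove pathwise uniqueness for \eqref{SDE-affine-Y}, which by the decompositions \eqref{mu-decomp} and \eqref{sigma-decomp} splits into the two coupled equations \eqref{SDE-affine-Y-I} and \eqref{SDE-affine-Y-J}. Because \eqref{SDE-affine-Y-I} is autonomous in $Y_I$, I would treat it first and then lift uniqueness to $Y_J$. Note that throughout we work over the full state space $\calx$, so that the retracted subspace $H_{I,0}$, needed only for weak existence, plays no role here.

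For the $I$-component I would apply the appendix uniqueness theorem to \eqref{SDE-affine-Y-I}. The two ingredients it requires are a square-root type modulus of continuity for the volatility and a Lipschitz-type control of the drift. The first is furnished by the diagonal structure \eqref{repr-sigma-block} from Proposition \ref{prop-sigma}: in the orthonormal basis $(e_i)_{i \in I}$ each coordinate of $Y_I$ is a scalar square-root diffusion driven by an independent one-dimensional Brownian motion arising from $W$ through the orthonormal basis $(g_i)_{i \in I}$ of $U_{I,0}$, and Lemma \ref{lemma-lin-growth-sigma} yields the coordinatewise estimate $(\sqrt{\lambda_i \la y,e_i\ra_H} - \sqrt{\lambda_i \la z,e_i\ra_H})^2 \le \lambda_i |\la y-z,e_i\ra_H|$ with $\lambda \in \ell^2(I)$. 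For the drift, the affine map $\bar\mu_{II}(y) = \bar m_{0,I} + \bar M_{II} y$ from \eqref{def-mu-bar-I} satisfies $\bar M_{II} = M_{II}$ by Lemma \ref{lemma-ass-3}, so that condition \eqref{trace-cond-drift}, namely $\sum_{i \in I} \| M_{II}^i \|_{H_I'} < \infty$, provides exactly the summable control of the coordinatewise drift differences $\la \bar\mu_{II}(y) - \bar\mu_{II}(z), e_i \ra_H = M_{II}^i (y-z)$ that is needed. Feeding these into the componentwise Yamada-Watanabe scheme — approximating $|\cdot|$ by the functions $\phi_n$, applying It\^o to $\sum_{i \in I} \phi_n(\la Y_I - Y_I', e_i\ra_H)$, taking expectations so that the martingale parts vanish, and then letting $n \to \infty$ — yields, via Gr\"onwall, that $Y_I = Y_I'$.

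Granting $Y_I = Y_I'$, the lift to the $J$-component is routine. By \eqref{sigma-decomp} the volatility $\bar\sigma_{JJ}(Y_I)$ in \eqref{SDE-affine-Y-J} depends only on the (now common) process $Y_I$, so the volatility difference of the two $J$-solutions vanishes identically, while the drift $\bar\mu_J$ is affine and hence Lipschitz by \eqref{def-mu-bar-J}. Thus $Y_J - Y_J'$ solves an SDE with zero diffusion and Lipschitz drift, and a standard Gr\"onwall estimate on $\bbe\|Y_{J,t} - Y_{J,t}'\|_H^2$ gives $Y_J = Y_J'$. Combining the two components yields $Y = Y'$, and therefore $X = X'$.

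The main obstacle is the diffusion contribution in the infinite-dimensional Yamada-Watanabe estimate. In finite dimension the sum of the coordinatewise remainders is bounded by $\frac{1}{n}\sum_{i} \lambda_i$ and trivially tends to $0$; in infinite dimension this crude bound is unavailable, since $\lambda$ lies only in $\ell^2(I)$ and need not be summable. Controlling the tail of this sum — simultaneously with the infinitely many coupled drift terms — is precisely where the diagonal square-root structure of $\bar\sigma_{II}$ (Proposition \ref{prop-sigma}) and the trace-type summability condition \eqref{trace-cond-drift} on the drift enter, and it is the reason why \eqref{trace-cond-drift}, which is vacuous in finite dimension, must be imposed. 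I would therefore carry out the estimate coordinatewise inside the appendix uniqueness theorem rather than on the Hilbert norm $\|Y_I - Y_I'\|_H$ directly, since the global square-root modulus of Lemma \ref{lemma-lin-growth-sigma} fails the Osgood condition when applied to $\|\cdot\|_H^2$ and only the componentwise version succeeds.
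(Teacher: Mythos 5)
Your proposal follows essentially the same route as the paper: reduce via the isomorphism $\Lambda$ to the transformed SDE, split into the coupled equations \eqref{SDE-affine-Y-I} and \eqref{SDE-affine-Y-J}, apply the infinite-dimensional Yamada--Watanabe uniqueness result (Theorem \ref{thm-SDE-uniqueness}) to the $I$-component using the diagonal square-root structure of $\bar{\sigma}_{II}$ and condition \eqref{trace-cond-drift}, and settle the $J$-component by a Lipschitz/Gr\"onwall argument once $Y_I$ is fixed. The additional detail you supply on the componentwise estimate is consistent with how the paper's Theorem \ref{thm-SDE-uniqueness} is actually proved (limit in $n$ per coordinate, then summation via monotone convergence), so the argument is correct.
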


\begin{proof}
Note that $\bar{\mu}_{II}$ has the affine structure (\ref{def-mu-bar-I}), and that $\bar{\sigma}_{II}$ has the diagonal structure (\ref{repr-sigma-block}) from Proposition \ref{prop-sigma}. Also noting (\ref{trace-cond-drift}), we may apply Theorem \ref{thm-SDE-uniqueness}, which provides pathwise uniqueness with starting points in $H_I^+$ for the affine SDE (\ref{SDE-affine-Y-I}). Moreover, for every weak solution $Y_I$ to (\ref{SDE-affine-Y-I}) we have pathwise uniqueness with starting points in $H_J$ for the affine SDE (\ref{SDE-affine-Y-J}). Consequently, we have pathwise uniqueness with starting points in $\calx$ for the affine SDE (\ref{SDE-affine-Y}). Since $\Lambda \in L(H)$ is an isomorphism, we deduce that pathwise uniqueness with starting points in $\calx$ for the affine SDE (\ref{SDE-affine}) holds.
\end{proof}

\begin{remark}
Note that we cannot apply Thm. 2.1 from \cite{Xie} in order to derive pathwise uniqueness. Indeed, defining $\psi_1,\varphi,\psi : \bbr_+ \to \bbr_+$ as $\psi_1(\theta) := \psi(\theta) := \theta$ and $\varphi(\theta) := \sqrt{\theta}$ for $\theta \in \bbr_+$, the integral divergence condition from (A4) in \cite{Xie} is not fulfilled, because
\begin{align*}
\int_0^1 \frac{1}{\psi(\theta) + \varphi(\theta)} d\theta &= \int_0^1 \frac{1}{\theta + \theta^{\nicefrac{1}{2}}} d\theta = \int_0^1 \frac{1}{\theta^{\nicefrac{1}{2}} ( \theta^{\nicefrac{1}{2}} + 1 )} d\theta
= \int_0^1 \bigg( \frac{1}{\theta^{\nicefrac{1}{2}}} - \frac{1}{\theta^{\nicefrac{1}{2}} + 1} \bigg) d\theta < \infty.
\end{align*}
\end{remark}

Noting that $\ccB(\calx_0) = \ccB(\calx)_{\calx_0}$ by (\ref{Borel-fields-Hilbert}), the proof of Theorem \ref{thm-main-ex} is now an immediate consequence of Propositions \ref{prop-affine-existence} and \ref{prop-affine-unique}, combined with our version of the Yamada-Watanabe theorem (Theorem \ref{thm-YW}).

\begin{remark}
In this paper we have focused on affine diffusion processes on the canonical state space. We suspect that our findings can also be transferred to affine processes with L\'{e}vy noise, and we keep this for future research. Apart from Ornstein-Uhlenbeck processes (see Section \ref{sec-ex-2}), the existence of such processes requires further investigations. In particular, it needs to be checked whether the available existence and uniqueness results can be applied to the additional jump part.
\end{remark}

\section{Examples}\label{sec-examples}

In this section, we present examples where Theorem \ref{thm-main-ex} applies. In Subsections \ref{sec-ex-1} and \ref{sec-ex-2} the application is straightforward. More care is required in Subsections \ref{sec-ex-3} and \ref{sec-ex-4}, where we consider infinite dimensional processes of Cox-Ingersoll-Ross type and of Heston type. In both of these two examples, we will first specify the volatility structure, then we define the retracted subspace with compact embedding, and in the last step we introduce the drift.

\subsection{Particular situations}\label{sec-ex-1}

As a consequence of the following result, Theorem \ref{thm-main-ex} in particular applies when the subspace $H_I$ is finite dimensional.

\begin{corollary}\label{cor-main-ex}
If the index set $I$ is finite, then the affine SDE (\ref{SDE-affine}) has a unique strong solution starting in $\calx$.
\end{corollary}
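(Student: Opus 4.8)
The plan is to derive the statement as a direct specialization of Theorem \ref{thm-main-ex}: I would show that finiteness of $I$ forces Assumption \ref{ass-ex} to hold in full and, simultaneously, collapses the retracted state space $\calx_0$ onto $\calx$. The starting observation is Remark \ref{rem-I-finite}: when $I$ is finite one may choose $\nu_i := 1$ for every $i \in I$, so that the operator $T$ in (\ref{T-compact-repr}) is the identity on $H_I$; consequently $H_{I,0} = H_I$, $H_{I,0}^+ = H_I^+$ and $\calx_0 = \calx$. Hence a unique strong solution starting in $\calx_0$ is exactly a unique strong solution starting in $\calx$, and it suffices to confirm that the remaining hypotheses of Theorem \ref{thm-main-ex} are met.

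First I would dispose of condition (iii) of Assumption \ref{ass-ex}: since $I$ is finite, so is $I_{>0} \subseteq I$, and any family indexed by a finite set automatically lies in $\ell^2$; thus $(\kappa_i / \lambda_i)_{i \in I_{>0}} \in \ell^2(I_{>0})$. Next I would check condition (iv). With $T = \Id$ the commutation requirement $M_{II} T = T M_{II}$ in (\ref{ass-prin-drift}) is trivially met, and since $H_{I,0}^+ = H_I^+$ the inclusion $m_{0,I} \in H_{I,0}^+$ amounts to $m_{0,I} \in H_I^+$, which is guaranteed by the standing inward-pointing hypothesis on $\mu$ through (\ref{cond-inward-1}) in Proposition \ref{prop-inward} (as recorded in the remark immediately following that proposition). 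Finally, the trace condition (\ref{trace-cond-drift}) holds because $\sum_{i \in I} \| M_{II}^i \|_{H_I'}$ is now a finite sum of finite terms, each $M_{II}^i$ being a bounded linear functional on $H_I$ since $M \in L(H)$.

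With conditions (i)--(iv) of Assumption \ref{ass-ex} all in force -- conditions (i) and (ii) being part of the standing framework of this section and (iii), (iv) having just been verified -- Theorem \ref{thm-main-ex} applies and yields a unique strong solution of (\ref{SDE-affine}) starting in $\calx_0 = \calx$, which is the assertion. I do not expect any genuine obstacle: the argument is a verification, and the only step that is not purely formal is the membership $m_{0,I} \in H_I^+$, which rests on the inward-pointing property rather than on the finiteness of $I$; everything else is immediate once $T$ is the identity and the index set of summation is finite.
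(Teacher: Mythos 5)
Your proposal is correct and follows essentially the same route as the paper: finiteness of $I$ gives condition (\ref{frac-in-l2}) for free, Remark \ref{rem-I-finite} makes $T$ the identity so that $\calx_0 = \calx$, and Theorem \ref{thm-main-ex} then applies. Your more detailed verification of condition (iv) of Assumption \ref{ass-ex} (the trivial commutation, $m_{0,I} \in H_I^+$ from the inward-pointing property, and the finiteness of the sum in (\ref{trace-cond-drift})) only spells out what the paper's proof leaves implicit.
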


\begin{proof}
Since $I$ is finite, condition (\ref{frac-in-l2}) is fulfilled. Furthermore, by Remark \ref{rem-I-finite} the compact linear operator $T \in K(H_I)$ given by (\ref{T-compact-repr}) is the identity operator, and we have $H_{I,0} = H_I$, $H_{I,0}^+ = H_I^+$ and $\calx_0 = \calx$. Therefore, Assumption \ref{ass-ex} is satisfied. Consequently, applying Theorem \ref{thm-main-ex} completes the proof.
\end{proof}

In particular, Corollary \ref{cor-main-ex} applies when the Hilbert space $H$ is finite dimensional. Therefore, we have generalized \cite[Thm. 8.1]{Filipovic-Mayerhofer}, which provides the existence of affine processes in finite dimension.

\subsection{Infinite dimensional processes of Ornstein-Uhlenbeck type}\label{sec-ex-2}

Note that Corollary \ref{cor-main-ex} in particular applies when $I = \emptyset$, which provides the existence of affine processes with state space being the whole Hilbert space $H$. In this situation, where the process $X$ is a so-called Ornstein-Uhlenbeck process, we can say even more. By condition (\ref{cond-parallel-2}) from Proposition \ref{prop-parallel} the mapping $S$ is constant, and hence the volatility $\sigma$ given by (\ref{sigma-S}) is constant as well. Therefore, the affine SDE (\ref{SDE-affine}) has the explicit solution given by the variation of constants formula
\begin{align*}
X_t = x_0 + \int_0^t S_{t-s} m_0 ds + \int_0^t S_{t-s} \sigma dW_s, \quad t \in \bbr_+,
\end{align*}
where $(S_t)_{t \geq 0}$ denotes the uniformly continuous semigroup generated by the linear operator $M \in L(H)$ appearing in (\ref{mu-affine}). Ornstein-Uhlenbeck processes on Hilbert spaces have recently been studied in \cite{BenthRuedigerSuess2018} and \cite{BenthSimonsen2018}, and they provide a link to the theory of semilinear stochastic partial differential equations (SPDEs) in the spirit of the semigroup approach; see for example \cite{DaPratoZabczyk}. More precisely, in the general situation, where $I$ and $J$ are arbitrary disjoint index sets, we could also regard the affine SDE (\ref{SDE-affine}) as a SPDE and look for (mild) martingale solutions, which means that the variation of constants formula
\begin{align*}
X_t = S_t x_0 + \int_0^t S_{t-s} m_0 ds + \int_0^t S_{t-s} \sigma(X_s) dW_s, \quad t \in \bbr_+
\end{align*}
is satisfied. According to \cite[Thm. 8.1]{DaPratoZabczyk}, a sufficient condition for the existence of martingale solutions is that the semigroup $(S_t)_{t \geq 0}$ is compact. However, we have the following negative result.

\begin{proposition}
Suppose that $\dim H = \infty$. Then no uniformly continuous, compact semigroup exists.
\end{proposition}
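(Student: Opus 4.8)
The plan is to exploit the tension between two facts: a uniformly continuous semigroup is norm-continuous at the origin, whereas a compact operator can never be invertible on an infinite-dimensional space. The whole argument is a short contradiction built from these two observations.

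First I would recall that uniform continuity of $(S_t)_{t \geq 0}$ means precisely that $t \mapsto S_t$ is continuous in the operator norm of $L(H)$; equivalently, $(S_t)$ is generated by a bounded operator $M \in L(H)$ and $S_t = e^{tM}$. In particular, since $S_0 = \Id$, we have $\lim_{t \to 0^+} \| S_t - \Id \|_{L(H)} = 0$. Hence I can fix some $t_0 > 0$ small enough that $\| S_{t_0} - \Id \|_{L(H)} < 1$. The Neumann series $\sum_{n=0}^{\infty} (\Id - S_{t_0})^n$ then converges in $L(H)$ and furnishes a bounded inverse of $S_{t_0}$, so $S_{t_0}$ is an isomorphism of $H$ onto itself.

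Next I would assume, toward a contradiction, that $(S_t)$ is also compact, so that in particular $S_{t_0}$ is a compact operator. Since the composition of a compact operator with a bounded operator is again compact, the identity $\Id = S_{t_0}^{-1} S_{t_0}$ would then be compact. But the identity on $H$ is compact if and only if the closed unit ball of $H$ is norm-compact, which by Riesz's lemma forces $\dim H < \infty$, contradicting the hypothesis $\dim H = \infty$. This contradiction shows that no uniformly continuous, compact semigroup can exist on an infinite-dimensional space.

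The argument has essentially no hard step. The only point requiring a little care is the first one, namely the identification of uniform continuity with norm-continuity at the origin (equivalently, with a bounded generator), which is the standard characterization of uniformly continuous semigroups and may simply be invoked. Everything else rests on the elementary but decisive observation that invertibility and compactness are mutually exclusive for operators on an infinite-dimensional Hilbert space.
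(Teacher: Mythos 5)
Your proof is correct, and it takes a genuinely different route from the paper. You exploit the norm-continuity of $t \mapsto S_t$ at $t=0$ to find $t_0>0$ with $\|S_{t_0}-\Id\|<1$, invert $S_{t_0}$ by a Neumann series, and conclude that compactness of $S_{t_0}$ would force the identity $\Id = S_{t_0}^{-1}S_{t_0}$ to be compact, which is impossible in infinite dimension. (In fact you could shortcut the Neumann series: for a uniformly continuous semigroup $S_t = e^{tM}$ with bounded generator $M$, \emph{every} $S_t$ is invertible with inverse $e^{-tM}$, so any single $S_t$ being compact already gives the contradiction.) The paper instead invokes a bounded-perturbation theorem for compact semigroups (\cite[Prop. 3.1.4]{Pazy92}): if $A$ generates a compact semigroup, so does $A+B$ for any bounded $B$; taking $B=-A$ makes the identity semigroup compact, a contradiction. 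Your argument is more elementary and entirely self-contained, needing only the Neumann series and Riesz's lemma, whereas the paper's is shorter on the page but outsources the real work to a cited perturbation result. Both arguments correctly use only compactness of $S_t$ for some (indeed any) $t>0$, which is the standard meaning of a compact semigroup.
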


\begin{proof}
Suppose that $A \in L(H)$ is the infinitesimal generator of a compact semigroup. According to \cite[Prop. 3.1.4]{Pazy92}, for every $B \in L(H)$ the operator $A+B$ is also the generator of a compact semigroup. In particular, choosing $B = -A$ we obtain that the semigroup $(S_t)_{t \geq 0}$ given by $S_t = \Id$ for each $t \geq 0$ is compact. Since $\dim H = \infty$, this is a contradiction.
\end{proof}

Consequently, apart from the particular case of Ornstein-Uhlenbeck processes, the SPDE approach is not appropriate in order to establish the existence of affine processes, and this is why we regard affine processes as solutions to infinite dimensional SDEs in this paper.

\subsection{Infinite dimensional processes of Cox-Ingersoll-Ross type}\label{sec-ex-3}

In this subsection, we establish the existence of infinite dimensional processes of Cox-Ingersoll-Ross type. Here we have the index sets $I = \bbn$ and $J = \emptyset$, and hence the state space is given by $\calx = H^+$. We define the volatility structure $S : H^+ \to L_1^+(H)$, the retracted subspace $H_0$ and the drift $\mu : H^+ \to H$ in three steps:

\subsubsection*{Volatility structure} Let $\lambda = (\lambda_i)_{i \in \bbn} \subset (0,\infty)$ be a sequence such that $\lambda \in \ell^2(\bbn)$. Then the mapping
\begin{align*}
N(x)\xi := \sum_{i \in \bbn} \lambda_i \la x,e_i \ra_H \la e_i,\xi \ra_H \, e_i, \quad x,\xi \in H
\end{align*}
is a well-defined continuous linear operator $N \in L(H,L_1(H))$. Furthermore, for every $x \in H^+$ the operator $N(x)$ is self-adjoint with $N(x) \in L_1^+(H)$. We define the affine mapping $S : H^+ \to L_1^+(H)$ as $S(x) := N(x)$ for all $x \in H^+$. Then $S$ has the affine form (\ref{S-affine}) with $n_0 = 0$, and we have
\begin{align*}
\lambda_i = \| S(e_i)e_i \|_H \quad \text{for each $i \in \bbn$.}
\end{align*}
Note that conditions (\ref{cond-parallel-1})--(\ref{cond-parallel-3}) from Proposition \ref{prop-parallel} are fulfilled, ensuring that the associated volatility $\sigma : H^+ \to L^2(U_0,H)$ given by (\ref{sigma-S}) is parallel. Furthermore, condition (\ref{frac-in-l2}) is fulfilled, because $\kappa_i = 0$ for each $i \in \bbn$, and the transformation $\Lambda \in L(H)$ given by (\ref{Lambda-def}) is simply the identity operator. By Proposition \ref{prop-sigma} the volatility is given by
\begin{align*}
\sigma(x) u = \sum_{i \in \bbn} \sqrt{ \lambda_i \la x,e_i \ra_H } \, \la g_i,u \ra_{U_0} \, e_i \quad \text{for all $x \in H^+$ and $u \in U_0$,}
\end{align*}
where $g_i = \Sigma_W^{\nicefrac{1}{2}} e_i$ for all $i \in \bbn$. If we even have $\lambda \in \ell^1(\bbn)$, then we can take the covariance operator $\Sigma_W \in L_1^{++}(U)$ defined as
\begin{align*}
\Sigma_W \, u := \sum_{i \in \bbn} \lambda_i \la e_i,u \ra_U \, e_i \quad \text{for all $u \in U$,}
\end{align*}
and then the volatility admits the representation
\begin{align*}
\sigma(x) u = \sum_{i \in \bbn} \sqrt{ \la x,e_i \ra_H } \, \la e_i,u \ra_U \, e_i \quad \text{for all $x \in H^+$ and $u \in U_0$.}
\end{align*}

\subsubsection*{Retracted subspace} Since $\lambda \in \ell^2(\bbn)$, there exists a sequence $(\nu_i)_{i \in \bbn} \subset (0,\infty)$ such that $\nu_i \to 0$ and $( \lambda_i / \nu_i )_{i \in \bbn} \in \ell^2(\bbn)$. Let $T \in K^{++}(H)$ be the compact linear operator with representation
\begin{align}\label{T-CIR}
Tx = \sum_{i \in \bbn} \nu_i \la x,e_i \ra_H \, e_i, \quad x \in H,
\end{align}
and let $H_0 := T(H)$ be the retracted subspace defined according to Lemma \ref{lemma-zusammenziehen}. Moreover, we set $H_0^+ := T(H^+)$.

\subsubsection*{Drift} We define the affine mapping $\mu : H^+ \to H$ as $\mu(x) := m_0 + Mx$, $x \in H^+$, where $m_0 \in H_0^+$, and $M \in L(H)$ is of the form
\begin{align}\label{M-CIR}
Mx = \sum_{i \in \bbn} \rho_i \la x,e_i \ra_H \, e_i, \quad x \in H
\end{align}
with a sequence $\rho = (\rho_i)_{i \in \bbn} \subset \bbr$ such that $\rho \in \ell^1(\bbn)$. Note that condition (\ref{trace-cond-drift}) and conditions (\ref{cond-inward-1})--(\ref{cond-inward-3}) from Proposition \ref{prop-inward} are fulfilled, which shows that $\mu$ is inward pointing. By the representations (\ref{T-CIR}) and (\ref{M-CIR}) we have $MT = TM$, showing that condition (\ref{ass-prin-drift}) is fulfilled. Consequently, by Theorem \ref{thm-main-ex} the affine Cox-Ingersoll-Ross type SDE (\ref{SDE-affine}) with parameters specified above has a unique strong solution starting in $H_0^+$.

\subsection{Infinite dimensional processes of Heston type}\label{sec-ex-4}

In this subsection, we establish the existence of infinite dimensional processes of Heston type. We assume that the disjoint index sets $I$ and $J$ are both infinite. Let $\tau : I \to J$ be a bijection which is order preserving; that is $\tau(i_1) \leq \tau(i_2)$ for all $i_1,i_2 \in I$ with $i_1 \leq i_2$. We define the volatility structure $S : \calx \to L_1^+(H)$, the retracted subspace $H_{I,0}$ and the drift $\mu : \calx \to H$ in three steps:

\subsubsection*{Volatility structure} Let $n_0 \in L_1^+(H)$ be such that $n_0 \xi = 0$ for all $\xi \in H_I$. Furthermore, let $\lambda = (\lambda_i)_{i \in I} \subset (0,\infty)$ and $\kappa = (\kappa_i)_{i \in I} \subset (0,\infty)$ be sequences such that $\lambda \in \ell^2(I)$ and $\kappa_i \leq \lambda_i$ for all $i \in I$. Moreover, we assume that $( \kappa_i / \lambda_i )_{i \in I} \in \ell^2(I)$. Then the mapping
\begin{align}\label{repr-N-Heston}
N(x)\xi := \sum_{i \in I} \la x,e_i \ra_H \Big( \la \lambda_i e_i + \kappa_i e_{\tau(i)},\xi \ra_H \, e_i + \la \kappa_i e_i + \lambda_i e_{\tau(i)}, \xi \ra_H \, e_{\tau(i)} \Big), \quad x,\xi \in H
\end{align}
is a well-defined continuous linear operator $N \in L(H,L_1(H))$. Furthermore, for every $x \in \calx$ the operator $Nx$ is self-adjoint with $Nx \in L_1^+(H)$. Noting that $\kappa_i \leq \lambda_i$ for all $i \in I$, this follows from the representation (\ref{repr-N-Heston}). We define the affine mapping $S : \calx \to L_1^+(H)$ as
\begin{align*}
S(x) := n_0 + Nx, \quad x \in \calx.
\end{align*}
Noting that $H_I \subset \ker(n_0)$, by the representation (\ref{repr-N-Heston}) we have
\begin{align*}
\lambda_i = \| S(e_i)_{II} \, e_i \|_H, \quad \text{and} \quad \kappa_i = \| S(e_i)_{IJ} \, e_i \|_H \quad \text{for each $i \in I$.}
\end{align*}
Note that conditions (\ref{cond-parallel-1})--(\ref{cond-parallel-3}) from Proposition \ref{prop-parallel} are fulfilled, ensuring that the associated volatility $\sigma : \calx \to L^2(U_0,H)$ given by (\ref{sigma-S}) is parallel. Furthermore, condition (\ref{frac-in-l2}) is fulfilled, and the transformation $\Lambda \in L(H)$ specified by (\ref{Lambda-def}) is given by $\Lambda = \Id + D$, where $D \in L(H)$ denotes the linear operator
\begin{align*}
Dx = - \sum_{i \in I} \frac{\kappa_i}{\lambda_i} \la x,e_i \ra_H \, e_{\tau(i)}, \quad x \in H.
\end{align*}

\subsubsection*{Retracted subspace} Since $\lambda \in \ell^2(I)$, there exists a sequence $(\nu_i)_{i \in I} \subset (0,\infty)$ such that $\nu_i \to 0$ and $( \lambda_i / \nu_i )_{i \in I} \in \ell^2(I)$. Let $T \in K(H_I)$ be the compact linear operator with representation (\ref{T-compact-repr}), and let $H_{I,0} := T(H_I)$ be the retracted subspace defined according to Lemma \ref{lemma-zusammenziehen}. Moreover, we set $H_{I,0}^+ := T(H_I^+)$ and $\calx_0 := H_{I,0}^+ \oplus H_J$.

\subsubsection*{Drift} Let $m_0 \in \calx$ be such that $m_{0,I} \in H_{I,0}^+$. Furthermore, let $M_{II} \in L(H_I)$ be a linear operator of the form
\begin{align}\label{M-Heston}
M_{II} x = \sum_{i \in I} \rho_i \la x,e_i \ra_H \, e_i, \quad x \in H_I
\end{align}
with a bounded sequence $\rho = (\rho_i)_{i \in I} \subset \bbr$ such that $\rho \in \ell^1(I)$, and let $M_J \in L(H,H_J)$ be arbitrary. We define $M \in L(H)$ as
\begin{align*}
Mx := M_{II} x_I + M_J x, \quad x \in H,
\end{align*}
and the affine mapping $\mu : \calx \to H$ by (\ref{mu-affine}). Note that condition (\ref{trace-cond-drift}) and conditions (\ref{cond-inward-1})--(\ref{cond-inward-3}) from Proposition \ref{prop-inward} are fulfilled, which shows that $\mu$ is inward pointing. By the representations (\ref{T-compact-repr}) and (\ref{M-Heston}) we have $M_{II}T = TM_{II}$, showing that condition (\ref{ass-prin-drift}) is fulfilled. Consequently, by Theorem \ref{thm-main-ex} the affine Heston type SDE (\ref{SDE-affine}) has a unique strong solution starting in $\calx_0$.

\begin{appendix}

\section{Proof of Lemma \ref{lemma 2.8}}\label{sec-aux}
We begin the proof of Lemma \ref{lemma 2.8} with the following remark: an 
	$A\in L(H; L(H))$ is considered to be a sesquilinear map from $H_\C \times H_\C$ into $H_\C$ by defining $\eta^* A\xi = \sum_{i=1}^\infty \la A_i \xi, \overline\eta\ra_{H_\C}e_i$ for all $\xi, \eta \in H_\C$. It is well-defined, if $(A_i)_{i\in\N}$ are trace class operators such that $\sum_{i\in I} (\tr A_i)^2 < \infty$ and $A_j =0 $ for all $j\in J$. Actually, we have 
	\begin{align*}
		\|\eta^* A\xi\|_{H_\C}^2 &= \sum_{i=1}^\infty|\la A_i \xi, \overline\eta\ra_{H_\C}|^2 \le \sum_{i=1}^\infty (\tr A_i)^2 \|\xi\|_{H_\C}^2\|\eta\|_{H_\C}^2 \\
		&= \sum_{i\in I} (\tr A_i)^2 \|\xi\|_{H_\C}^2\|\eta\|_{H_\C}^2 < \infty.
	\end{align*}
	Note that, for all $i \in \N$, $\la \psi(t,u), M_i\ra_{H_\C} = \la M^*\psi(t,u), e_i \ra_{H_\C}$ and $\la n_i\psi(t,u), \overline{\psi(t,u)} \ra_{H_\C} = \la \psi(t,u)^* N \psi(t,u), e_i \ra_{H_\C}$. Here, we use $M$ and $N$ from Equation \eqref{affcoeffe}.
	
	Since $\psi(t,u)$ is Fr\'echet differentiable and  $\la D_t\psi(t,u), e_i \ra_{H_\C} = \partial_t \psi_i(t,u)$, Equation  \eqref{Riccati2} is equivalent to  
	\begin{equation}
		\begin{split}
			D_t \psi(t,u) & = M^*\psi(t,u) + \frac12 \psi(t,u)^* N \psi(t,u),\quad t \ge 0, \\
			\psi(0,u) & = u \in \cU.
		\end{split}
		\label{equ:ats:riccati_equation_system_psi_var}
	\end{equation}

\begin{proof}[Proof of Lemma \ref{lemma 2.8}]
	We truncate $I$ at $n$ by considering $I_n = I \cap \{1,\ldots,n\},\ n\in\N$. Denote   $J_n = \N \setminus I_n$. By the very definition of $\la \cdot , \cdot \ra_{H_\bbC}$ we directly obtain that
	\begin{align}
		\nonumber\partial_t\|\psi_{I_n}(t,u)\|_{H_\C}^2 &= \sum_{i\in I_n}\partial_t\Big(\re\psi_i(t,u)^2 + \im\psi_i(t,u)^2\Big) \\
		\nonumber& =2\sum_{i\in I_n} \big(\re \psi_i(t,u)\partial_t(\re \psi_i(t,u)) + \im \psi_i(t,u)\partial_t(\im \psi_i(t,u)) \big) \\
		& =2\sum_{i\in I_n}\re \big(\overline{\psi_i(t,u)}\partial_t\psi_i(t,u)\big).
		\label{equ:ats:norm_psi_equation}
	\end{align} 
	It follows that for a bounded linear operator $L\in L(H)$ and $\psi \in H$, $\la L \psi, \overline{\psi} \ra_H + \la L \overline{\psi},\psi \ra_H = 2 \re \la L \psi, \overline{\psi} \ra_H$. Then, \eqref{equ:ats:riccati_equation_system_psi_var} together with \eqref{prop1:7} yields 
	\begin{align*}
	\partial_t\psi_i(t,u) &= \la \psi_{I_n}(t,u) + \psi_{J_n}(t,u), m_i\ra_{H_\C} \\ 
	&\hspace{.5cm} + \frac12 \la n_i (\psi_{I_n}(t,u) + \psi_{J_n}(t,u)), \psi_{I_n}(t,u) + \psi_{J_n}(t,u) \ra_{H_\C} \\
	&= \la \psi_{I_n}(t,u),m_{i,I_n}\ra_{H_\C} + \la \psi_{J_n}(t,u),m_{i,J_n}\ra_{H_\C} + \frac12 n_{i,ii}|\psi_i(t,u)|^2 \\ 
	& \hspace{.5cm} + \re\la n_{i,I_nJ_n}\psi_{J_n}(t,u),\overline{\psi_{I_n}(t,u)}\ra_{H_\C} 
	    + \frac12\la n_{i,J_nJ_n}\psi_{J_n}(t,u),\overline{\psi_{J_n}(t,u)}\ra_{H_\C},
	\end{align*}
	for all $i\in I_n$. Hence, for $t\in[0,T_u)$ and $i\in I_n$,
	\begin{align}
		\nonumber 2\re \big( \overline{\psi_i(t,u)}\partial_t\psi_i(t,u) \big) = & 2\re\Big(\overline{\psi_i(t,u)}\big(\la \psi_{I_n}(t,u),m_{i,I_n}\ra_{H_\C} + \la \psi_{J_n}(t,u),m_{i,J_n}\ra_{H_\C}\big)\Big) \\
		\nonumber & + \re\psi_i(t,u) n_{i,ii}|\psi_i(t,u)|^2 \\
		\nonumber & + 2\re \psi_i(t,u) \re\la n_{i,I_nJ_n}\psi_{J_n}(t,u),\overline{\psi_{I_n}(t,u)}\ra_{H_\C} \\
		& + \re\Big(\overline{\psi_i(t,u)}\la n_{i,J_nJ_n}\psi_{J_n}(t,u),\overline{\psi_{J_n}(t,u)}\ra_{H_\C}\Big). \label{equ:ats:partial_derivative_i}
	\end{align}
	Since $\re \psi_i(t,u) \le 0$ for $i\in I_n \subset I$ and $t\in[0,T_u)$ and $n_{i,ii} \ge 0$, we obtain that $\re \psi_i(t,u)n_{i,ii}|\psi_i(t,u)|^2 \le 0$ and it follows that
	\begin{align*}
	\eqref{equ:ats:partial_derivative_i} & \le  2\re\Big(\overline{\psi_i(t,u)}\big(\la \psi_{I_n}(t,u),m_{i,I_n}\ra_{H_\C} + \la \psi_{J_n}(t,u),m_{i,J_n}\ra_{H_\C}\big)\Big) \\
	& + 2\re \psi_i(t,u) \re\la n_{i,I_nJ_n}\psi_{J_n}(t,u),\overline{\psi_{I_n}(t,u)}\ra_{H_\C} \\
	& + \re\Big(\overline{\psi_i(t,u)}\la n_{i,J_nJ_n}\psi_{J_n}(t,u),\overline{\psi_{J_n}(t,u)}\ra_{H_\C}\Big).
	\intertext{Using that  $2\re (\alpha\beta) \le |\alpha|^2 + |\beta|^2$ and $2\re \alpha \re \beta \le |\alpha|^2 + |\beta|^2$ for $\alpha,\beta \in \C$, we get}
	\eqref{equ:ats:partial_derivative_i} & \le  |\psi_i(t,u)|^2 + |\la \psi_{I_n}(t,u),m_{i,I_n}\ra_{H_\C}|^2 + |\psi_i(t,u)|^2 + |\la \psi_{J_n}(t,u),m_{i,J_n}\ra_{H_\C}|^2 \\
	& + |\psi_i(t,u)|^2 + |\la n_{i,I_nJ_n}\psi_{J_n}(t,u),\overline{\psi_{I_n}(t,u)}\ra_{H_\C}|^2 \\ 
	& + \frac12|\psi_i(t,u)|^2 + \frac12|\la n_{i,J_nJ_n}\psi_{J_n}(t,u),\overline{\psi_{J_n}(t,u)}\ra_{H_\C}|^2.
	\intertext{Note that for $i \in I_n$, $\la \psi_{I_n}(t,u),m_{i,I_n}\ra_{H_\C} = \la \psi_{I_n}(t,u),\pi_{I_n} M e_i\ra_{H_\C} = \la M^*\psi_{I_n}(t,u), e_i\ra_{H_\C}$ and that this is also true when $I_n$ is replaced by $J_n$. Hence,}
	\eqref{equ:ats:partial_derivative_i} & \le  \frac72|\psi_i(t,u)|^2 + |\la M^*\psi_{I_n}(t,u),e_i\ra_{H_\C}|^2 + |\la M^*\psi_{J_n}(t,u), e_i\ra_{H_\C}|^2 \\
	& + |\la n_{i,I_nJ_n}\psi_{J_n}(t,u),\overline{\psi_{I_n}(t,u)}\ra_{H_\C}|^2 
	  + \frac12|\la n_{i,J_nJ_n}\psi_{J_n}(t,u),\overline{\psi_{J_n}(t,u)}\ra_{H_\C}|^2.
	\end{align*}
	Moreover,  $\|n_{i,KL}\| \le \| n_i \|$ for any subset $K,L \subset \N$, such that
	\begin{align*}
	2\re \big( \overline{\psi_i(t,u)}\partial_t\psi_i(t,u) \big)
	& \le \frac72|\psi_i(t,u)|^2 + |\la M^*\psi_{I_n}(t,u),e_i\ra_{H_\C}|^2
	+ |\la M^*\psi_{J_n}(t,u),e_i\ra_{H_\C}|^2 \\
	&\hspace{.5cm} + \|n_i\|^2\big(\|\psi_{J_n}(t,u)\|_{H_\C}^2\|\psi_{I_n}(t,u)\|_{H_\C}^2+ \frac12\|\psi_{J_n}(t,u)\|_{H_\C}^4\big).
	\end{align*}
	It follows from \eqref{equ:ats:norm_psi_equation} that
	\begin{align*}
	\partial_t\|\psi_{I_n}(t,u)\|_{H_\C}^2 &\le \sum_{i\in I_n} \left( \frac72|\psi_i(t,u)|^2 + |\la M^*\psi_{I_n}(t,u),e_i\ra_{H_\C}|^2
	+ |\la M^*\psi_{J_n}(t,u),e_i\ra_{H_\C}|^2 \right) \\
	&\hspace{.5cm}+ \sum_{i\in I_n} \|n_i\|^2\big(\|\psi_{J_n}(t,u)\|_{H_\C}^2\|\psi_{I_n}(t,u)\|_{H_\C}^2 +  \|\psi_{J_n}(t,u)\|_{H_\C}^4\big) \\
	&\le \frac72 \|\psi_{I_n}(t,u)\|_{H_\C}^2 + \|M^*\psi_{I_n}(t,u)\|_{H_\C}^2 + \|M^*\psi_{J_n}(t,u)\|_{H_\C}^2 \\ 
	&\hspace{.5cm}+ \sum_{i\in I_n} \|n_i\|^2\big(\|\psi_{J_n}(t,u)\|_{H_\C}^2\|\psi_{I_n}(t,u)\|_{H_\C}^2 +  \|\psi_{J_n}(t,u)\|_{H_\C}^4\big) \\
	&\le \frac72 \|\psi_{I_n}(t,u)\|_{H_\C}^2 + \|M\|^2\|\psi_{I_n}(t,u)\|_{H_\C}^2 + \|M\|^2\|\psi_{J_n}(t,u)\|_{H_\C}^2 \\ 
	&\hspace{.5cm}+ \sum_{i\in I_n} \|n_i\|^2\big(\|\psi_{J_n}(t,u)\|_{H_\C}^2\|\psi_{I_n}(t,u)\|_{H_\C}^2 +  \|\psi_{J_n}(t,u)\|_{H_\C}^4\big) \\
	&\le C \big(\|\psi_{I_n}(t,u)\|_{H_\C}^2 + \|\psi_{J_n}(t,u)\|_{H_\C}^2 + \|\psi_{J_n}(t,u)\|_{H_\C}^2\|\psi_{I_n}(t,u)\|_{H_\C}^2 \\
	&\hspace{.5cm}+ \|\psi_{J_n}(t,u)\|_{H_\C}^4\big) \\
	&\le C \big(1+\|\psi_{I_n}(t,u)\|_{H_\C}^2\big)\big(1+\|\psi_{J_n}(t,u)\|_{H_\C}^2+\|\psi_{J_n}(t,u)\|_{H_\C}^4\big) 
	\end{align*}
	holds for all $t\in[0,T_u)$, where $C = \sum_{i\in I} \|n_i\|^2 + \|M\|^2 + \frac72$. 
    We set $h_{n,u}(t):= 1+\|\psi_{J_n}(t,u)\|_{H_\C}^2+\|\psi_{J_n}(t,u)\|_{H_\C}^4$
	Applying Gronwall's inequality
	yields that for $t\in[0,T_u)$,
	\begin{equation}
		1+\|\psi_{I_n}(t,u)\|_{H_\C}^2 \le 1+\|u_{I_n}\|_{H_\C}^2 + C(1+\|u_{I_n}\|_{H_\C}^2)\int_0^t h_{n,u}(s) e^{C\int_s^t h_{n,u}(r) d r} d s.
		\label{equ:ats:gronwall_n}
	\end{equation}
	Let $t\in[0,T_u)$. By the definitions of $I_n$ and $J_n$, it holds that	$\lim_{n\to\infty}\|\psi_{I_n}(t,u)\|_{H_\C}=\|\psi_I(t,u)\|_{H_\C}$, $\lim_{n\to\infty}u_{I_n}=u_I$ increasingly and $\lim_{n\to\infty}\|\psi_{J_n}(t,u)\|_{H_\C}=\|\psi_J(t,u)\|_{H_\C}$ decreasingly. 
	It follows that $\lim_{n\to\infty}h_{n,u}(r)=h_u(r)$ decreasingly for all $r \in [s,t]$. By Dini's theorem this convergence is uniform over $[s,t]$, thus, $\lim_{n\to\infty}\int_s^t h_{n,u}(r)d r=\int_s^t h_u(r)d r$ decreasingly for all $s \in [0,t]$. 
	
	Consider the sequence of functions $(h_{n,u}(s)\exp( \int_s^t C h_{n,u}(r)d r))_{n\in\N}$. Similar to the discussion above, this sequence converges to $h_u(s)\exp(C\int_s^t h_u(r)d r)$ decreasingly for all $s\in[0,t]$. Again, by Dini's Theorem, this convergence is uniform on $[0,t]$ and therefore it holds that 
	$$
	\lim_{n\to\infty}\int_0^t h_{n,u}(s) e^{C\int_s^t h_{n,u}(r)d r} d s = \int_0^t h_u(s) e^{\int_s^t h_u(r)d r} d s,\quad t \in [0,T_u).
	$$
	Taking limits on both sides of \eqref{equ:ats:gronwall_n}, we finally get
	$$
	1+\|\psi_I(t,u)\|_{H_\C}^2 \le 1+\|u_I\|_{H_\C}^2 + C(1+\|u_I\|_{H_\C}^2)\int_0^t h_u(s) e^{C \int_s^t h_u(r) d r} d s,\quad t\in[0,T_u).
	$$
	Subtracting $1$ from both sides, the required inequality is proved.
\end{proof}

\section{Infinite dimensional stochastic differential equations}\label{app-SDE}

The goal of this appendix is to provide the required results about the existence of solutions to infinite dimensional SDEs. In particular, we present a version of the Yamada-Watanabe theorem for starting points from a subspace which is equipped with a finer topology. This version of the Yamada-Watanabe theorem is inspired by \cite{GMR}, where an existence result for starting points from a retracted subspace with compact embedding is presented. We will also provide a refined version of this existence result as well as a result for pathwise uniqueness, which is a version of the uniqueness result from \cite{Yamada-Watanabe-1971} in infinite dimension.

Let $H$ be a separable Hilbert space, and let $\calx \subset H$ be a subset. Let $U$ be a separable Hilbert space and let $\Sigma_W \in L_1^{++}(U)$ be a self-adjoint, strictly positive trace class operator. By Lemma \ref{lemma-zusammenziehen} the set $U_0 := \Sigma_W^{\nicefrac{1}{2}} U$, equipped with the inner product
\begin{align*}
\la u,v \ra_{U_0} := \big\la \Sigma_W^{-\nicefrac{1}{2}} u, \Sigma_W^{-\nicefrac{1}{2}} v \big\ra_U, \quad u,v \in U_0
\end{align*}
is a separable Hilbert space. Let $\mu : \calx \to H$ and $\sigma : \calx \to L_2(U_0,H)$ be measurable mappings. We consider the SDE
\begin{align}\label{SDE-Hilbert}
\left\{
\begin{array}{rcl}
dX_t & = & \mu(X_t)dt + \sigma(X_t) dW_t
\\ X_0 & = & x_0,
\end{array}
\right.
\end{align}
where $W$ is a $U$-valued Wiener process with covariance operator $\Sigma_W$. Let $(H_0,\| \cdot \|_{H_0})$ be a separable Hilbert space as in Lemma \ref{lemma-zusammenziehen}. Then we have $H_0 \subset H$ as a set, and by Lemma \ref{lemma-zusammenziehen} we have $H_0 \in \ccB(H)$ and $\ccB(H_0) = \ccB(H)_{H_0}$. We define $\calx_0 := \calx \cap H_0$, and denote by $\ccB(\calx)$ the Borel $\sigma$-algebra of $\calx$ with respect to $\| \cdot \|_{H}$, and by $\ccB(\calx_0)$ the Borel $\sigma$-algebra of $\calx_0$ with respect to $\| \cdot \|_{H_0}$. Then we have 
\begin{align}\label{Borel-fields-Hilbert}
\ccB(\calx_0) = \ccB(H)_{\calx_0} = \ccB(\calx)_{\calx_0}, 
\end{align}
because $\ccB(\calx_0) = \ccB(H_0)_{\calx} = (\ccB(H)_{H_0})_{\calx} = \ccB(H)_{\calx_0} = \ccB(\calx)_{\calx_0}$. In the sequel, we will speak about weak solutions, pathwise uniqueness, strong solutions and a unique strong solution \emph{starting in $\calx_0$}. These are the usual definitions (see, for example \cite{Roeckner}), but here we consider starting points from $\calx_0$, which is equipped with the Borel $\sigma$-algebra $\ccB(\calx_0)$.

\begin{theorem}\label{thm-YW}
The SDE (\ref{SDE-Hilbert}) has a unique strong solution starting in $\calx_0$ if and only if both of the following two conditions are satisfied:
\begin{enumerate}[(i)]
\item For each probability measure $\nu$ on $(\calx_0,\ccB(\calx_0))$ there exists a weak solution to (\ref{SDE-Hilbert}) such that $\nu = \bbp \circ X_0$.

\item Pathwise uniqueness for solutions to (\ref{SDE-Hilbert}) starting in $\calx_0$ holds.
\end{enumerate}
\end{theorem}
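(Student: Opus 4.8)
The plan is to prove the two implications separately, with essentially all the substance concentrated in the direction that weak existence together with pathwise uniqueness yields a unique strong solution. The forward implication is immediate: if a unique strong solution starting in $\calx_0$ exists, then on any stochastic basis carrying a $U$-valued Wiener process $W$ with covariance $\Sigma_W$ and an $\calf_0$-measurable initial condition $X_0$ of law $\nu$ on $(\calx_0,\ccB(\calx_0))$, applying the solution functional produces a weak solution with $\bbp \circ X_0 = \nu$, which gives (i); pathwise uniqueness (ii) is built into the notion of a \emph{unique} strong solution (and, if one insists, is recovered by feeding two solutions with common driving data into the defining functional). So I would devote the bulk of the work to the reverse direction.

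For the converse I would carry out the classical Yamada--Watanabe construction, adapted to starting points in $\calx_0$. First I would fix a probability measure $\nu$ on $(\calx_0,\ccB(\calx_0))$ and, by (i), choose a weak solution $(X,W)$ with $\bbp\circ X_0=\nu$. Working with the law $\bbp^W$ of $W$ on the Polish noise space $\bbw := C(\bbr_+,U)$ and with the solution viewed as a random element of the Polish space of continuous $\calx_0$-valued paths (note that $\calx_0$ is Polish, being a closed subset of the Hilbert space $H_0$), I would record the joint law $\mu$ of the triple $(X_0,W,X)$. Since $X_0$ is $\calf_0$-measurable and $W$ is independent of $\calf_0$, the law of $(X_0,W)$ factorizes as $\nu\otimes\bbp^W$. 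Because all spaces in play are Polish, I can disintegrate $\mu$ with respect to $(X_0,W)$ to obtain a regular conditional distribution $Q_{x_0,w}$ of $X$ given $(X_0,W)=(x_0,w)$.

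Next I would form, on an appropriate product space, the measure $\nu(dx_0)\,\bbp^W(dw)\,Q_{x_0,w}(dX^1)\,Q_{x_0,w}(dX^2)$, equipped with a suitably augmented filtration for which $W$ remains a Wiener process. A routine verification shows that both $(X^1,W)$ and $(X^2,W)$ are weak solutions sharing the same initial value $X_0$ and the same noise $W$; pathwise uniqueness (ii) then forces $X^1=X^2$ almost surely, which means that $Q_{x_0,w}$ is a Dirac mass $\delta_{F(x_0,w)}$ for $(\nu\otimes\bbp^W)$-almost every $(x_0,w)$. This defines a measurable map $F$ from $\calx_0\times\bbw$ into the solution path space with $X=F(X_0,W)$; adaptedness of the solution translates into $F(x_0,\cdot)$ being measurable with respect to the augmented Wiener filtration. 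Consequently, on any basis carrying a Wiener process $W$ and an initial condition of law $\nu$, the process $X:=F(X_0,W)$ is a strong solution, and (ii) gives its uniqueness.

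The hard part will be the bookkeeping forced by the finer topology on the starting set $\calx_0$: I must ensure that equipping $\calx_0$ with the $H_0$-norm does not break the measure-theoretic machinery. Here the identity \eqref{Borel-fields-Hilbert}, $\ccB(\calx_0)=\ccB(H)_{\calx_0}=\ccB(\calx)_{\calx_0}$, is what saves the day: it guarantees that the Borel structure used for the initial law coincides with the trace of the ambient Borel structure, so that disintegration and the measurable selection of $F$ proceed exactly as in the standard single-topology setting. The two remaining delicate points are checking that the two conditionally independent copies are genuine solutions relative to one common admissible filtration (so that pathwise uniqueness is actually applicable to them) and that the selected functional $F$ is adapted to the augmented Wiener filtration; both are handled by the usual Polish-space regular-conditional-probability arguments, which are legitimate precisely because of \eqref{Borel-fields-Hilbert}.
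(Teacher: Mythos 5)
Your overall strategy coincides with the paper's: the forward implication is dismissed as immediate, and the converse is the classical Yamada--Watanabe disintegration argument, with the observation that the identity \eqref{Borel-fields-Hilbert} is what allows the finer topology on $\calx_0$ to be reconciled with the ambient Borel structure. The paper does not write this argument out; it cites \cite[Thm.~2.1, Lemmas~2.10 and 2.11]{Roeckner} and notes that \eqref{Borel-fields-Hilbert} is the only point requiring care. So you have correctly identified both the method and the one genuinely new ingredient. However, two points in your write-up need repair.

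First, you place the solution in ``the Polish space of continuous $\calx_0$-valued paths.'' This is not the right path space: a solution starting in $\calx_0$ takes values in $\calx$ and need not remain in $H_0$ for $t>0$, let alone be continuous in $\|\cdot\|_{H_0}$. The whole point of the adjusted theorem (as the paper's subsequent remark emphasizes) is that the solution is $\calx$-valued while only the \emph{initial condition} lives in $(\calx_0,\ccB(\calx_0))$. The disintegration must therefore be performed on $C(\bbr_+,H)$, with the marginal of $X_0$ on $(\calx_0,\ccB(\calx_0))$; the identity $\ccB(\calx_0)=\ccB(H)_{\calx_0}$ is exactly what makes the two measurable structures compatible, but your construction as stated would not even get off the ground. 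Second, your functional $F$ is built from one fixed initial law $\nu$ and is only defined $\nu\otimes\bbp^W$-almost everywhere; this yields a unique strong solution \emph{with initial distribution $\nu$} (the paper's step (a), i.e.\ \cite[Lemma~2.10]{Roeckner}), but the notion of a unique strong solution \emph{starting in $\calx_0$} requires a single measurable solution map valid simultaneously for all starting points in $\calx_0$. Passing from the family of per-$\nu$ functionals to one universal functional is the paper's step (b) (\cite[Lemma~2.11]{Roeckner}) and is a genuine, if standard, additional argument that your proposal omits.
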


\begin{proof}
We provide a sketch of the proof. The necessity of conditions (i) and (ii) is straightforward, and follows as in the beginning of the proof of \cite[Thm. 2.1]{Roeckner}. The sufficiency of conditions (i) and (ii) is proven in two steps:
\begin{enumerate}
\item[(a)] First we show that for each probability measure $\nu$ on $(\calx_0,\ccB(\calx_0))$ there exists a unique strong solution to (\ref{SDE-Hilbert}) such that $\nu = \bbp \circ X_0$. Noting (\ref{Borel-fields-Hilbert}), this follows from \cite[Lemma 2.10]{Roeckner}.

\item[(b)] Then we can proceed as in the proof of \cite[Lemma 2.11]{Roeckner} in order to obtain a unique strong solution to (\ref{SDE-Hilbert}) starting in $\calx_0$.
\end{enumerate}
\end{proof}

\begin{remark}
For related work about the Yamada-Watanabe theorem in infinite dimension we refer to \cite{Ondrejat}, \cite{Roeckner} (see also Appendix E in \cite{Liu-Roeckner}) and \cite{Tappe-YW} as well as \cite{Kurtz-2007} and the successive paper \cite{Kurtz-2014}. Theorem \ref{thm-YW} does not immediately follow from these papers, because here we are dealing with solutions having values in $\calx$, but starting in $\calx_0$, where the latter set is equipped with a finer topology. The reason for this adjusted version of the Yamada-Watanabe theorem is our upcoming existence result; see Theorem \ref{thm-SDE-existence}. However, as seen from the proof of Theorem \ref{thm-YW}, the existence of strong solutions with a given probability measure $\nu$ on $(\calx_0,\ccB(\calx_0))$ as initial distribution -- part (a) -- follows from the aforementioned references. For this result, we can, for example, also refer to \cite[Thm. 3.14]{Kurtz-2007} or \cite[Thm. 1.5]{Kurtz-2014}. Moreover, the construction of the unique strong solution -- part (b) -- involves well-known techniques.
\end{remark}

Now, we present sufficient conditions for an application of our version of the Yamada-Watanabe theorem (Theorem \ref{thm-YW}). We start with sufficient conditions for the existence of weak solutions. Here we present a refined version of a result from \cite{GMR}, where the essential idea is to consider starting points from a retracted subspace with compact embedding. For the rest of this section, we assume that the set $\calx$ is of the form
\begin{align*}
\calx = \lin^+ \{ e_k : k \in \bbn \}
\end{align*}
for some orthonormal basis $( e_k )_{k \in \bbn}$ of $H$.

\begin{theorem}\label{thm-SDE-existence}
We suppose that the following conditions are fulfilled:
\begin{enumerate}[(i)]
\item The compact linear operator $T \in K(H)$ from Lemma \ref{lemma-zusammenziehen} has the representation (\ref{T-repr}) with respect to the given orthonormal basis $( e_k )_{k \in \bbn}$.

\item The mappings $\mu : \calx \to H$ and $\sigma : \calx \to L_2(U_0,H)$ are continuous and satisfy the linear growth condition.

\item We have $\mu(\calx_0) \subset H_0$ and $\sigma(\calx_0) \subset L_2(U_0,H_0)$, and the mappings $\mu|_{\calx_0} : \calx_0 \to H_0$ and $\sigma|_{\calx_0} : \calx_0 \to L_2(U_0,H_0)$ satisfy the linear growth condition with respect to $\| \cdot \|_{H_0}$.

\item The mapping $\mu$ is inward pointing at boundary points of $\calx$, and the mapping $\sigma$ is parallel to the boundary at boundary points of $\calx$.
\end{enumerate}
Then for each probability measure $\nu$ on $(\calx_0,\ccB(\calx_0))$ there exists a weak solution $(X,W)$ to the SDE (\ref{SDE-Hilbert}) such that $\nu = \bbp \circ X_0$.
\end{theorem}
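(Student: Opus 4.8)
The plan is to follow the compactness method underlying \cite{GMR}, adapting it to the present hypotheses. The decisive structural feature is that the initial law $\nu$ is carried by $\calx_0 = \calx \cap H_0$, where $H_0$ embeds compactly into $H$ (Lemma \ref{lemma-zusammenziehen}); this compact embedding is precisely what restores the compactness needed for a tightness argument in the infinite-dimensional setting, where bounded sets in $H$ are not relatively compact. Throughout I would fix a finite horizon $[0,T]$ and afterwards pass to $\bbr_+$ with the topology of uniform convergence on compact time intervals.

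First I would set up a Galerkin approximation. For $n \in \bbn$ let $\pi_n$ denote the orthogonal projection of $H$ onto $\lin\{e_1,\dots,e_n\}$. Because the operator $T$ of condition (i) is diagonal in the basis $(e_k)$, the projections $\pi_n$ leave both the cone $\calx$ and the subspace $H_0$ invariant, and $\pi_n \to \Id$ strongly in $H$ as well as in $H_0$; this compatibility is the reason condition (i) is imposed. I would then set $\mu_n := \pi_n \mu(\pi_n \cdot)$ and $\sigma_n := \pi_n \sigma(\pi_n \cdot)$, which are continuous, of linear growth, take values in the finite-dimensional subspace $\lin\{e_1,\dots,e_n\}$, and inherit the inward-pointing and parallel properties from condition (iv). For each $n$ the (effectively finite-dimensional) SDE with coefficients $\mu_n,\sigma_n$ driven by $W$ admits a weak solution $X^n$ with $\bbp\circ X^n_0 = \nu\circ\pi_n^{-1}$, by the classical Skorokhod existence theorem for continuous coefficients of linear growth, and the inward-pointing/parallel conditions guarantee that $X^n$ stays in $\calx\cap\lin\{e_1,\dots,e_n\}$.

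Next I would derive uniform a priori bounds. Using condition (iii), namely the linear growth of $\mu$ and $\sigma$ in the finer norm $\|\cdot\|_{H_0}$, together with It\^o's formula applied to $\|\cdot\|_{H_0}^2$, the Burkholder--Davis--Gundy inequality, and Gronwall's lemma, I expect to obtain
\begin{align*}
\sup_{n\in\bbn}\bbe\Big[\sup_{t\in[0,T]}\|X^n_t\|_{H_0}^2\Big] < \infty,
\end{align*}
together with an Aldous-type increment bound for $\|X^n_{\tau+\delta}-X^n_\tau\|_H$ along stopping times. The first estimate confines the paths to balls of $H_0$, which are relatively compact in $H$ by the compact embedding; combined with the increment control and a Kolmogorov- or Aldous-type tightness criterion, this yields tightness of the laws of $(X^n)$ in $C([0,T];H)$. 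By Prokhorov's theorem and the Skorokhod representation theorem I would then pass, along a subsequence, to almost surely convergent copies $(\tilde X^n,\tilde W^n)\to(\tilde X,\tilde W)$ on a new stochastic basis, identify $\tilde W$ as a $U$-valued Wiener process with covariance $\Sigma_W$, and reconstruct the stochastic integral via the martingale-problem method so that $\tilde X$ solves the SDE driven by $\tilde W$; the initial law is $\nu$ since $\nu\circ\pi_n^{-1}\to\nu$ weakly, and $\tilde X$ is $\calx$-valued because $\calx$ is closed.

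The main obstacle I anticipate is the interplay between the two topologies. The moment control is available only in the finer $H_0$-norm (condition (iii)), which is what delivers tightness through the compact embedding, whereas the continuity required to pass to the limit in the drift and in the stochastic integral $\int_0^\cdot \sigma_n(\tilde X^n_s)\,d\tilde W^n_s$ is available only in the coarser $H$-norm (condition (ii)); here one uses that $\sigma_n(\tilde X^n)\to\sigma(\tilde X)$ in $L_2(U_0,H)$ but \emph{not} necessarily in $L_2(U_0,H_0)$, so the limiting equation must be read as an identity of $H$-valued integrals. Reconciling the $H_0$-estimates with the $H$-valued limit, and carrying out the martingale-problem identification together with the recovery of the driving Wiener process in infinite dimension, is the delicate part of the argument; the invariance of $\calx$ under the limit passes through without difficulty once these points are settled.
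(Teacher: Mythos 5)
There is a genuine gap, and it sits exactly where the paper has to do real work beyond citing \cite{GMR}. Your Galerkin coefficients $\mu_n := \pi_n \mu(\pi_n\,\cdot\,)$ and $\sigma_n := \pi_n\sigma(\pi_n\,\cdot\,)$ are not well defined: $\mu$ and $\sigma$ are given only on the cone $\calx$, so $\mu(\pi_n x)$ makes no sense for $x$ in $\lin\{e_1,\dots,e_n\}$ outside the cone, and the classical finite-dimensional existence theorem you invoke (Skorokhod, for continuous coefficients of linear growth) requires coefficients on the whole space. The paper resolves this by first extending the coefficients to all of $H$ via the metric projection $\Pi x = \sum_{i}\la x,e_i\ra_H^+\,e_i$ onto $\calx$, and the reason this extension is compatible with your $H_0$-a priori estimates is the commutation $\Pi T = T\Pi$ (condition (i) of the theorem) together with Lemma \ref{lemma-function-commute}, which guarantees $\Pi(H_0)\subset H_0$ and preserves linear growth in $\|\cdot\|_{H_0}$. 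Without some such extension your scheme does not get off the ground; with it, the whole Galerkin/tightness/martingale-problem machinery you describe is precisely the content of \cite[Thm.\ 2]{GMR}, which the paper simply cites rather than reproves.

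The second, more serious gap is the sentence ``the inward-pointing/parallel conditions guarantee that $X^n$ stays in $\calx\cap\lin\{e_1,\dots,e_n\}$.'' This is a stochastic invariance (viability) statement for an SDE with merely continuous, non-Lipschitz coefficients, and it is not automatic from the boundary conditions; it is the one piece of the proof the paper carries out in full. Concretely, the paper works with the extended equation on $H$, obtains a weak solution $(X,W)$, and then shows $X\in\calx$ up to evanescence by a stopping-time argument: assuming $\la X,e_i\ra_H$ goes negative, one isolates a stochastic interval $[\![S,T_n]\!]$ on which $\la X,e_i\ra_H\le 0$, uses that $\la\Pi(x),e_i\ra_H=0$ there so that the inward-pointing and parallel conditions force the drift increment to be nonnegative and the martingale increment to vanish, and derives the contradiction $-1/n\ge 0$. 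Your alternative of deferring invariance to the limit (``$\tilde X$ is $\calx$-valued because $\calx$ is closed'') only works if the approximants are already $\calx$-valued, which is the very claim at issue. Also, minor but worth noting: the uniform bound $\sup_n\bbe[\sup_{t\le T}\|X^n_t\|_{H_0}^2]<\infty$ requires a second moment of $\nu$ in $H_0$, which is not assumed, so a localization of the initial condition is needed.
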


\begin{proof}
Let $\Pi : H \to \calx$ be the metric projection on the closed convex cone $\calx$. Then we have
\begin{align*}
\Pi x = \sum_{i \in \bbn} \la x,e_i \ra_H^+ \, e_i \quad \text{for each $x \in H$,}
\end{align*}
and $\Pi$ is continuous and satisfies $\| \Pi x \|_H \leq \| x \|_H$ for all $x \in H$. Furthermore, we have $\Pi T = T \Pi$. By Lemma \ref{lemma-function-commute} we have $\Pi(H_0) \subset H_0$, and $\Pi|_{H_0} : H_0 \to H_0$ satisfies the linear growth condition with respect to $\| \cdot \|_{H_0}$. Consider the $H$-valued SDE
\begin{align}\label{SDE-Hilbert-full}
\left\{
\begin{array}{rcl}
dX_t & = & \bar{\mu}(X_t)dt + \bar{\sigma}(X_t) dW_t
\\ X_0 & = & x_0,
\end{array}
\right.
\end{align}
where $\bar{\mu} : H \to H$ is given by $\bar{\mu} := \mu \circ \Pi$, and $\bar{\sigma} : H \to L_2(U_0,H)$ is given by $\bar{\sigma} := \sigma \circ \Pi$. Then the functions $\bar{\mu}$ and $\bar{\sigma}$ are continuous and satisfy the linear growth condition. Furthermore, we have $\bar{\mu}(H_0) \subset H_0$ and $\bar{\sigma}(H_0) \subset L_2(U_0,H_0)$, and the mappings $\bar{\mu}|_{H_0} : H_0 \to H_0$ and $\bar{\sigma}|_{H_0} : H_0 \to L_2(U_0,H_0)$ satisfy the linear growth condition with respect to $\| \cdot \|_{H_0}$. Now, let $\nu$ be a probability measure on $(\calx_0,\ccB(\calx_0))$. Let $x_0 : \Omega \to \calx_0$ be a $\calf_0$-measurable random variable with $\bbp \circ x_0 = \nu$, and let $W$ be a $U$-valued Wiener process with covariance operator $\Sigma_W$, defined on some stochastic basis. Note that $x_0$ is also $\calf_0 / \ccB(H)_{\calx_0}$-measurable, because we have $\ccB(\calx_0) = \ccB(H)_{\calx_0}$ by (\ref{Borel-fields-Hilbert}). In order to obtain a weak solution $(X,W)$ to the SDE (\ref{SDE-Hilbert-full}), we proceed as in the proof of \cite[Thm. 2]{GMR} (see also \cite[Thm. 3.12]{Atma-book}), where only deterministic starting points are considered. Note that, apart from the initial conditions, our framework is a special case of that considered in \cite[Thm. 2]{GMR}, because for each $T \in \bbr_+$ the mapping
\begin{align*}
C([0,T];H) \times [0,T] \to H, \quad (f,t) \mapsto f(t)
\end{align*}
is continuous, and hence conditions (B1)--(B3) and (A3) appearing in \cite[Thm. 2]{GMR} are fulfilled. It remains to prove that $(X,W)$ is also a weak solution to the original SDE (\ref{SDE-Hilbert}). For this purpose, we will show that the closed convex cone $\calx$ is invariant; more precisely that $X \in \calx$ up to an evanescent set. Let $i \in \bbn$ be arbitrary. For each $x \in H$ with $\la x,e_i \ra_H \leq 0$ we have $\la \Pi(x),e_i \ra_H = 0$, and hence, by Propositions \ref{prop-inward} and \ref{prop-parallel} we obtain
\begin{align}\label{inv-cond-proof}
\la \mu(\Pi(x)),e_i \ra_H \geq 0 \quad \text{and} \quad \la \sigma(\Pi(x)),e_i \ra_H = 0.
\end{align}
We define the stopping time
\begin{align*}
S := \inf \{ t \in \bbr_+ : \la X_t, e_i \ra_H < 0 \},
\end{align*}
and claim that $\bbp(S = \infty) = 1$. Suppose, on the contrary, that $\bbp(S < \infty) > 0$. For each $n \in \bbn$ we define the stopping time 
\begin{align*}
T_n := \inf \bigg\{ t \in \bbr_+ : \la X_t, e_i \ra_H < -\frac{1}{n} \bigg\}.
\end{align*}
By the continuity of sample paths of $X$, there exists $n \in \bbn$ such that $\bbp(B_n) > 0$, where  
\begin{align*}
B_n := \{ T_n < \infty \} \cap \{ \la X_t, e_i \ra_H \leq 0 \text{ for all } t \in [\![ S,T_n ]\!] \}.
\end{align*}
Note that $B_n \in \calf$. Indeed, by \cite[Props. I.1.21 and I.1.23]{JacodShiryaev} we have 
\begin{align*}
[\![ S,T_n ]\!] \in \mathscr{O} \subset \calf \otimes \ccB(\bbr_+),
\end{align*}
and hence
\begin{align*}
B_n = \{ T_n < \infty \} \cap \bigcap_{t \in \bbq_+} \Big( \{ \la X_t, e_i \ra_H \leq 0 \} \cap [\![ S,T_n ]\!]_{(\bullet,t)} \Big) \in \calf.
\end{align*}
On the set $B_n$ we have 
\begin{align*}
S < T_n < \infty \quad \text{as well as} \quad \la X_{S},e_i \ra_H = 0 \quad \text{and} \quad \la X_{T_n},e_i \ra_H = -\frac{1}{n}. 
\end{align*}
Furthermore, we have
\begin{align*}
\la X,e_i \ra_H \leq 0 \quad \text{on $[\![ S,T_n ]\!] \cap (B_n \times \bbr_+)$.} 
\end{align*}
Therefore, using (\ref{inv-cond-proof}), on the set $B_n$ we obtain
\begin{align*}
-\frac{1}{n} &= \la X_{T_n},e_i \ra_H - \la X_S,e_i \ra_H = \int_S^{T_n} \la \bar{\mu}(X_s),e_i \ra_H \, ds + \int_{S}^{T_n} \la \bar{\sigma}(X_s),e_i \ra_H \, dW_s
\\ &= \int_S^{T_n} \la \mu(\Pi(X_s)),e_i \ra_H \, ds + \int_{S}^{T_n} \la \sigma(\Pi(X_s)),e_i \ra_H \, dW_s \geq 0,
\end{align*}
which is a contradiction. Consequently, we have $X \in \calx$ up to an evanescent set, and hence $(X,W)$ is also a weak solution to the SDE (\ref{SDE-Hilbert}).
\end{proof}

We conclude this appendix with sufficient conditions for pathwise uniqueness. The following result is a version of \cite[Thm. 2]{Yamada-Watanabe-1971} in infinite dimension.

\begin{theorem}\label{thm-SDE-uniqueness}
We suppose that the following conditions are fulfilled:
\begin{enumerate}[(i)]
\item We have $U = H$, and the operator $\Sigma_W$ has a diagonal structure along the orthonormal basis $( e_k )_{k \in \bbn}$.

\item There exists a sequence $L = (L_i)_{i \in \bbn} \subset \bbr_+$ with $L \in \ell^1(\bbn)$ such that for each $i \in \bbn$ we have
\begin{align}\label{mu-pathwise}
| \mu_i(x) - \mu_i(y) | \leq L_i \|x-y\|_H \quad \text{for all $x,y \in \calx$,}
\end{align}
where $\mu_i : H \to \bbr$ is defined as
\begin{align*}
\mu_i(x) := \la \mu(x),e_i \ra_H, \quad x \in H.
\end{align*}

\item For each $i \in \bbn$ there is a mapping $\sigma_i : \bbr_+ \to \bbr_+$ with $\sigma_i(0) = 0$ such that
\begin{align}\label{sigma-pathwise-structure}
\sigma(x)u = \sum_{i \in \bbn} \sigma_i( \la x,e_i \ra_H) \la g_i,u \ra_{U_0} \, e_i \quad \text{for all $x \in \calx$ and $u \in U_0$,}
\end{align}
where $g_i = \Sigma_W^{\nicefrac{1}{2}} e_i$ for all $i \in \bbn$.

\item There exists a measurable, increasing function $\rho : \bbr_+ \to \bbr_+$ with $\rho(0) = 0$ and $\rho(u) \in (0,\infty)$ for all $u \in (0,\infty)$ satisfying
\begin{align}\label{int-diverges}
\int_0^{\epsilon} \frac{1}{\rho(u)^2} du = \infty \quad \text{for all $\epsilon > 0$}
\end{align}
such that for each $i \in \bbn$ we have
\begin{align}\label{sigma-pathwise}
|\sigma_i(x) - \sigma_i(y)| \leq \rho(|x-y|) \quad \text{for all $x,y \in \bbr_+$.}
\end{align}
\end{enumerate}
Then we have pathwise uniqueness with starting points in $\calx$ for the SDE (\ref{SDE-Hilbert}).
\end{theorem}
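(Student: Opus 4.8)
The plan is to carry out the classical one-dimensional Yamada--Watanabe argument coordinatewise, exploiting that conditions (i) and (iii) make the noise act diagonally so that the problem decouples into countably many scalar equations coupled only through the Lipschitz drift. Let $(X,W)$ and $(Y,W)$ be two solutions on a common stochastic basis with the same driving Wiener process and $\bbp(X_0=Y_0)=1$, where $X_0,Y_0\in\calx$. Write $Z:=X-Y$ and $Z^i:=\la Z,e_i\ra_H$. The crucial first step is to record the martingale structure: since $\Sigma_W$ is diagonal, the system $g_i=\Sigma_W^{\nicefrac12}e_i$ is an orthonormal basis of $U_0$, and from the diagonal form (\ref{sigma-pathwise-structure}) one computes $\sigma(x)^*e_i=\sigma_i(\la x,e_i\ra_H)\,g_i$. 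Hence the real-valued martingales $N^i$, obtained as the $e_i$-components of $\int_0^{\cdot}(\sigma(X_s)-\sigma(Y_s))\,dW_s$, are mutually orthogonal with $d\la N^i\ra_t=|\sigma_i(X^i_t)-\sigma_i(Y^i_t)|^2\,dt$, so that each coordinate difference solves the scalar equation $dZ^i_t=(\mu_i(X_t)-\mu_i(Y_t))\,dt+dN^i_t$ with $Z^i_0=0$.

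Second, I would introduce the standard Yamada--Watanabe approximations. Using the divergence condition (\ref{int-diverges}), choose $1=a_0>a_1>\cdots\downarrow0$ with $\int_{a_n}^{a_{n-1}}\rho(u)^{-2}\,du=n$ and build $\phi_n\in C^2(\bbr)$ with $\phi_n(x)\uparrow|x|$, $|\phi_n'|\le1$ and $0\le\phi_n''(x)\le\frac{2}{n\rho(|x|)^2}$. Applying It\^o's formula to $\phi_n(Z^i)$ on the stochastic interval $[0,t\wedge\tau_k]$, where $\tau_k:=\inf\{t:\|X_t\|_H\vee\|Y_t\|_H\ge k\}$ localizes so that the local martingale term is a genuine martingale of zero mean, and taking expectations, the second-order term is bounded by $\frac{1}{2}\bbe\int_0^{t\wedge\tau_k}\frac{2}{n\rho(|Z^i_s|)^2}|\sigma_i(X^i_s)-\sigma_i(Y^i_s)|^2\,ds\le t/n\to0$, where I use that $X^i,Y^i\in\bbr_+$ together with (\ref{sigma-pathwise}); the drift term is bounded, using $|\phi_n'|\le1$ and (\ref{mu-pathwise}), by $L_i\,\bbe\int_0^{t\wedge\tau_k}\|Z_s\|_H\,ds$. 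Letting $n\to\infty$ via Fatou yields $\bbe[|Z^i_{t\wedge\tau_k}|]\le L_i\int_0^t\bbe[\|Z_{s\wedge\tau_k}\|_H]\,ds$ for every $i\in\bbn$.

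Third comes the recombination, which is exactly where condition (ii), $L\in\ell^1(\bbn)$, enters. Summing the coordinate estimates over $i\in\bbn$ and using $\|Z_t\|_H=(\sum_i|Z^i_t|^2)^{\nicefrac12}\le\sum_i|Z^i_t|$ together with Tonelli's theorem gives
\begin{align*}
\bbe[\|Z_{t\wedge\tau_k}\|_H]\le\sum_{i\in\bbn}\bbe[|Z^i_{t\wedge\tau_k}|]\le\|L\|_{\ell^1(\bbn)}\int_0^t\bbe[\|Z_{s\wedge\tau_k}\|_H]\,ds.
\end{align*}
Since the left-hand side is bounded by $2k$ on $[0,\tau_k]$, it is finite, and Gronwall's inequality forces $\bbe[\|Z_{t\wedge\tau_k}\|_H]=0$ for all $t\ge0$. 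Letting $k\to\infty$ (with $\tau_k\to\infty$ by continuity of the paths) and invoking path continuity yields $X=Y$ up to indistinguishability, which is the asserted pathwise uniqueness.

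The main obstacle, and the genuinely infinite-dimensional feature, is the passage from the scalar estimates to a single closed Gronwall inequality. The orthogonality of the coordinate martingales, which is what licenses the coordinatewise one-dimensional treatment, hinges on the diagonal structure in (i) and (iii); and the summability $\|L\|_{\ell^1(\bbn)}<\infty$ is precisely what prevents the sum of the per-coordinate drift contributions from diverging. Care must also be taken with the localization (to keep the stochastic integrals true martingales and all expectations finite) and with the interchange of summation with the limit $n\to\infty$, but these are routine once the diagonal martingale structure has been established.
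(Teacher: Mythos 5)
Your proposal is correct and follows essentially the same route as the paper's proof: reduce to scalar coordinate equations via the diagonal structure of $\Sigma_W$ and $\sigma$, apply the classical Yamada--Watanabe approximation $\phi_n$ coordinatewise to get $\bbe[|\la Z_t,e_i\ra_H|]\le L_i\int_0^t\bbe[\|Z_s\|_H]\,ds$, then sum over $i$ using $L\in\ell^1(\bbn)$ and conclude by Gronwall. Your explicit localization via the stopping times $\tau_k$ is a careful handling of an integrability point that the paper's (admittedly sketched) proof passes over, but it does not change the argument.
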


\begin{proof}
The proof is similar to that of \cite[Thm. 1]{Yamada-Watanabe-1971}, and we only sketch the most relevant arguments. The sequence $(\lambda_k)_{k \in \bbn} \subset (0,\infty)$ given by $\lambda_k := \la \Sigma_W e_k,e_k \ra_U$ for $k \in \bbn$ satisfies $\sum_{k \in \bbn} \lambda_k < \infty$ and we have $\Sigma_W e_k = \lambda_k e_k$ for all $k \in \bbn$. Furthermore, the system $(g_i)_{i \in \bbn}$ is an orthonormal basis of $U_0$. Let $(X,W)$ and $(Y,W)$ be two weak solutions to the SDE (\ref{SDE-Hilbert}) with $\bbp(X_0 = Y_0) = 1$. We set $Z := X - Y$. Let $i \in \bbn$ be arbitrary. By the diagonal structure (\ref{sigma-pathwise-structure}) of $\sigma$ we have $\bbp$-almost surely
\begin{align*}
\la Z_t,e_i \ra_H = \int_0^t \big( \mu_i(X_s) - \mu_i(Y_s) \big) ds + \int_0^t \big( \sigma_i(\la X_s,e_i \ra_H) - \sigma_i(\la Y_s,e_i \ra_H) \big) d \beta_s^i , \quad t \in \bbr_+,
\end{align*}
where the process
\begin{align*}
\beta^i := \frac{1}{\sqrt{\lambda_i}} \la W,e_i \ra_U 
\end{align*}
is a real-valued standard Wiener process; see \cite[Prop. 4.3.ii]{DaPratoZabczyk}. Using (\ref{int-diverges}), we can choose a sequence $(\varphi_n)_{n \in \bbn}$ of functions $\varphi_n \in C^2(\bbr)$ precisely as in the proof of \cite[Thm. 1]{Yamada-Watanabe-1971}. Now, let $t \in \bbr_+$ and $n \in \bbn$ be arbitrary. By It\^{o}'s formula as well as (\ref{mu-pathwise}) and (\ref{sigma-pathwise}) we obtain
\begin{align*}
\bbe \big[ \varphi_n(\la Z_t,e_i \ra_H) \big] \leq L_i \int_0^t \bbe \big[ \| Z_s \|_H \big] ds + \frac{t}{n}.
\end{align*}
Therefore, letting $n \to \infty$ we obtain
\begin{align*}
\bbe \big[ | \la Z_t,e_i \ra_H | \big] \leq L_i \int_0^t \bbe \big[ \| Z_s \|_H \big] ds.
\end{align*}
Using the monotone convergence theorem, we deduce
\begin{align*}
\bbe \big[ \| Z_t \|_H \big] &\leq \bbe \bigg[ \sum_{i \in \bbn} | \la Z_t,e_i \ra_H | \bigg] = \sum_{i \in \bbn} \bbe \big[ | \la Z_t,e_i \ra_H | \big] \leq \| L \|_{\ell^1(\bbn)} \int_0^t \bbe \big[ \| Z_s \|_H \big] ds.
\end{align*}
Since $L \in \ell^1(\bbn)$, by Gronwall's inequality we obtain $X = Y$ up to indistinguishability, which concludes the proof.
\end{proof}

\section{Linear operators in Hilbert spaces}\label{app-linear-operators}

In this appendix we provide the required results about linear operators in Hilbert spaces. 

\begin{proposition}\cite[Satz VI.3.6]{Werner-2007}
Let $H_0$ and $H$ be separable Hilbert spaces. For every compact linear operator $T \in K(H_0,H)$ there exist orthonormal bases $( f_k )_{k \in \bbn}$ of $H_0$ and $( e_k )_{k \in \bbn}$ of $H$, and a decreasing sequence $(s_k)_{k \in \bbn} \subset \bbr_+$ with $s_k \downarrow 0$ such that
\begin{align}\label{compact-repr}
Tx = \sum_{k=1}^{\infty} s_k \la x,f_k \ra_{H_0} \, e_k \quad \text{for each $x \in H_0$.} 
\end{align}
\end{proposition}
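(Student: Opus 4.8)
The plan is to deduce this from the spectral theorem for compact self-adjoint operators, applied to the operator $A := T^* T$. First I would record that $A \in L(H_0)$ is compact, being the composition of the compact operator $T$ with the bounded operator $T^*$; that it is self-adjoint, since $A^* = T^* T^{**} = T^* T = A$; and that it is non-negative, because $\langle A x, x \rangle_{H_0} = \| T x \|_H^2 \geq 0$ for every $x \in H_0$. The spectral theorem then furnishes an orthonormal basis $( f_k )_{k \in \bbn}$ of $H_0$ consisting of eigenvectors of $A$, with eigenvalues $\mu_k \geq 0$. Since $A$ is compact, the only possible accumulation point of its eigenvalues is $0$, so I may enumerate the eigenvectors so that the positive eigenvalues appear first in non-increasing order; setting $s_k := \sqrt{\mu_k}$ then yields a non-increasing sequence $( s_k )_{k \in \bbn} \subset \bbr_+$ with $s_k \downarrow 0$.

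Next I would construct the second basis. For each index $k$ with $s_k > 0$, set $e_k := s_k^{-1} T f_k \in H$. These vectors form an orthonormal system, as a direct computation shows:
\begin{align*}
\langle e_k, e_l \rangle_H = \frac{1}{s_k s_l} \langle T f_k, T f_l \rangle_H = \frac{1}{s_k s_l} \langle A f_k, f_l \rangle_{H_0} = \frac{\mu_k}{s_k s_l} \langle f_k, f_l \rangle_{H_0} = \delta_{kl}.
\end{align*}
For the remaining indices, namely those with $s_k = 0$, one has $\| T f_k \|_H^2 = \langle A f_k, f_k \rangle_{H_0} = \mu_k = 0$, so that $f_k \in \ker T$. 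Since $H$ is separable, I can complete the orthonormal system $\{ e_k : s_k > 0 \}$ to a full orthonormal basis $( e_k )_{k \in \bbn}$ of $H$, attaching the added basis vectors to the indices $k$ with $s_k = 0$.

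Finally I would verify the representation (\ref{compact-repr}). Expanding an arbitrary $x \in H_0$ in the basis $( f_k )$ and using the continuity of $T$ to move it inside the series, I obtain
\begin{align*}
T x = \sum_{k} \langle x, f_k \rangle_{H_0} \, T f_k = \sum_{k : s_k > 0} s_k \langle x, f_k \rangle_{H_0} \, e_k = \sum_{k} s_k \langle x, f_k \rangle_{H_0} \, e_k,
\end{align*}
where the terms with $s_k = 0$ drop out because $f_k \in \ker T$; this is exactly (\ref{compact-repr}).

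I expect the only real obstacle to be bookkeeping rather than analysis: the delicate point is the simultaneous reindexing and completion needed to keep $( s_k )$ non-increasing with $s_k \downarrow 0$, to pair each kernel eigenvector with a genuine completion vector on the $H$-side, and to ensure that $( f_k )$ and $( e_k )$ are honest orthonormal bases of the full spaces rather than mere orthonormal systems. All of the substantive analytic content---the existence of an eigenbasis and the accumulation of the eigenvalues only at $0$---is packaged into the spectral theorem for compact self-adjoint operators, which I would invoke as a known result.
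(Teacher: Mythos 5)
The paper does not actually prove this proposition --- it is quoted verbatim from \cite[Satz VI.3.6]{Werner-2007} --- so your argument is necessarily a self-contained substitute, and it is the standard one: apply the spectral theorem to the compact, self-adjoint, non-negative operator $T^*T$, set $e_k := s_k^{-1}Tf_k$ on the positive part of the spectrum, and expand. All of the analytic content in your write-up (compactness and non-negativity of $T^*T$, orthonormality of the $e_k$, passing $T$ through the series) is correct.

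However, the step you set aside as ``bookkeeping rather than analysis'' is exactly the point where the proposition, as transcribed here with orthonormal \emph{bases} and a monotone sequence, cannot be established in general --- because in that generality it is false. Any representation $Tx = \sum_k s_k \langle x,f_k\rangle_{H_0}\, e_k$ with $(f_k)_{k\in\bbn}$ an orthonormal basis of $H_0$ and $(e_k)_{k\in\bbn}$ orthonormal forces $Tf_j = s_j e_j$, hence $s_j = \|Tf_j\|_H$. Thus every eigenvector of $T^*T$ lying in $\ker T$ contributes a zero to the sequence, and monotonicity then forces $s_k=0$ for all subsequent indices, so that $(\ker T)^{\perp}$ is spanned by finitely many $f_k$, i.e.\ $T$ has finite rank. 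Consequently a compact operator with infinite-dimensional range and nontrivial kernel (e.g.\ a diagonal operator on $\ell^2$ with diagonal $1,0,\tfrac12,0,\tfrac13,0,\dots$) admits no representation of the stated form; no amount of reindexing rescues it. Werner's Satz VI.3.6 is stated for orthonormal \emph{systems}, which is precisely what your construction produces before the completion step; the completion to full bases is compatible with monotonicity of $(s_k)$ only when $T$ is injective or of finite rank. In the only place the paper uses the result (Lemma \ref{lemma-zusammenziehen}, where $T$ is self-adjoint and strictly positive, hence injective with dense range), the obstruction is vacuous and your construction does yield genuine bases; but as a proof of the proposition as stated, the deferred step is not a mere bookkeeping matter --- it is where the claim breaks.
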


The numbers $(s_k^2)_{k \in \bbn}$ are the eigenvalues of $T^* T$, and the $(s_k)_{k \in \bbn}$ are called the \emph{singular values} of $T$. We say that a compact linear operator $T \in K(H_0,H)$ with representation (\ref{compact-repr}) has \emph{positive singular values} if $s_k > 0$ for all $k \in \bbn$. For what follows, let $H$ be a separable Hilbert space.

\begin{lemma}\label{lemma-zusammenziehen}
Let $T \in K^{++}(H)$ be a compact, self-adjoint, strictly positive linear operator with representation
\begin{align}\label{T-repr}
Tx = \sum_{k \in \bbn} \lambda_k \la x,e_k \ra_H \, e_k \quad \text{for each $x \in H$,}
\end{align}
where $(e_k)_{k \in \bbn}$ is an an orthonormal basis of $H$, and $(\lambda_k)_{k \in \bbn} \subset (0,\infty)$ is a decreasing sequence with $\lambda_k \downarrow 0$. Then the following statements are true:
\begin{enumerate}[(i)]
\item The space $H_0 := T(H)$ equipped with the inner product
\begin{align}\label{inner-prod-T}
\la x,y \ra_{H_0} := \la T^{-1}x, T^{-1}y \ra_H, \quad x,y \in H_0
\end{align}
is a separable Hilbert space, which is dense in $H$.

\item The operator $T : (H,\| \cdot \|_{H}) \to (H_0,\| \cdot \|_{H_0})$ is an isometric isomorphism.

\item The system $( f_k )_{k \in \bbn}$ given by
\begin{align*}
f_k := T e_k = \lambda_k e_k, \quad k \in \bbn
\end{align*}
is an orthonormal basis of $H_0$.

\item For all $x \in H_0$ and $k \in \bbn$ we have
\begin{align}\label{inner-prod-ek}
\la x,e_k \ra_H = \lambda_k \la x,f_k \ra_{H_0}.
\end{align}

\item We have the representation
\begin{align}\label{repr-H0}
H_0 = \bigg\{ x \in H : \sum_{k \in \bbn} \frac{1}{\lambda_k^2} | \la x,e_k \ra_H |^2 < \infty \bigg\}.
\end{align}

\item The identity operator $\Id : (H_0,\| \cdot \|_{H_0}) \to (H,\| \cdot \|_H)$ is a compact linear operator with positive singular values.

\item We have $H_0 \in \ccB(H)$ and $\ccB(H_0) = \ccB(H)_{H_0}$.
\end{enumerate}
\end{lemma}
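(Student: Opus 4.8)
The backbone of the argument is that $T$ is injective---its eigenvalues $\lambda_k$ are strictly positive---so $T^{-1}$ is a well-defined (generally unbounded) linear map on $H_0 = T(H)$, and the inner product (\ref{inner-prod-T}) is genuinely positive definite. The plan is to establish (ii) first, since (i) and (iii) then come for free. For $x \in H$ one computes directly from (\ref{inner-prod-T}) that $\|Tx\|_{H_0}^2 = \la T^{-1}Tx, T^{-1}Tx \ra_H = \|x\|_H^2$, so $T$ is an isometry; it is onto $H_0$ by definition and injective, hence an isometric isomorphism, giving (ii). Then (i) follows: the inner product axioms are immediate, completeness and separability of $(H_0,\|\cdot\|_{H_0})$ are transported from $H$ through the isometric isomorphism $T$, and density of $H_0$ in $H$ holds because $H_0$ is a linear subspace containing every $f_k = \lambda_k e_k$, hence containing the dense set $\lin \{ e_k \}$. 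For (iii), isometric isomorphisms carry orthonormal bases to orthonormal bases and $(f_k) = (Te_k)$ is the image of the ONB $(e_k)$; alternatively $\la f_k,f_l\ra_{H_0} = \la e_k,e_l\ra_H = \delta_{kl}$ together with totality.

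Next, (iv) and (v) are direct computations exploiting the diagonal form of $T$. For (iv), self-adjointness gives $\la x,e_k\ra_H = \la T(T^{-1}x),e_k\ra_H = \la T^{-1}x, Te_k\ra_H = \lambda_k\la T^{-1}x,e_k\ra_H = \lambda_k \la x,f_k\ra_{H_0}$ for $x \in H_0$, using $T^{-1}f_k = e_k$. For (v), $x \in H_0$ means $x = Ty$ for some $y \in H$, i.e.\ $\la y,e_k\ra_H = \lambda_k^{-1}\la x,e_k\ra_H$; the requirement $y \in H$, namely $\sum_k |\la y,e_k\ra_H|^2 < \infty$, is exactly the stated summability condition, and conversely that condition lets one define the preimage $y := \sum_k \lambda_k^{-1}\la x,e_k\ra_H \, e_k \in H$.

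For (vi), combine (iii) and (iv): for $x \in H_0$ one has $\Id x = \sum_k \la x,f_k\ra_{H_0} f_k = \sum_k \lambda_k \la x,f_k\ra_{H_0}\, e_k$, which is precisely a singular value decomposition of $\Id : (H_0,\|\cdot\|_{H_0}) \to (H,\|\cdot\|_H)$ with orthonormal systems $(f_k)$, $(e_k)$ and singular values $\lambda_k \downarrow 0$. By the singular-value representation recalled just before this lemma, $\Id$ is compact, and the singular values are positive since $\lambda_k > 0$.

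Finally, (vii) is where the real work lies. That $H_0 \in \ccB(H)$ follows from the representation (v): writing $\varphi_n(x) := \sum_{k \le n} \lambda_k^{-2} |\la x,e_k\ra_H|^2$, each $\varphi_n$ is continuous on $H$, so $\sup_n \varphi_n$ is Borel and $H_0 = \{x : \sup_n \varphi_n(x) < \infty\} = \bigcup_{m \in \bbn}\{x : \sup_n \varphi_n(x) \le m\} \in \ccB(H)$. For the identity $\ccB(H_0) = \ccB(H)_{H_0}$, the inclusion $\ccB(H)_{H_0} \subseteq \ccB(H_0)$ is routine because the embedding $\iota : H_0 \hookrightarrow H$ is continuous (from $\|x\|_H \le \lambda_1\|x\|_{H_0}$, with $\lambda_1 = \|T\|$ the largest eigenvalue). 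The reverse inclusion is the main obstacle: it cannot be obtained from continuity alone, and I would invoke the Lusin--Souslin theorem---a continuous injective map between Polish spaces sends Borel sets to Borel sets. Since $H_0$ and $H$ are Polish and $\iota$ is a continuous injection, every $B \in \ccB(H_0)$ satisfies $\iota(B) \in \ccB(H)$; as $\iota(B) = B \subseteq H_0$, we conclude $B = \iota(B) \cap H_0 \in \ccB(H)_{H_0}$, completing the proof.
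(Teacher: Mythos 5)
Your proof is correct and follows essentially the same route as the paper, which dismisses (i)--(vi) as straightforward and handles (vii) exactly as you do: the inclusion $\ccB(H)_{H_0} \subset \ccB(H_0)$ from continuity of the embedding, and the reverse inclusion from the Lusin--Souslin theorem (cited in the paper as Kuratowski's theorem from Parthasarathy). Your only deviation is deriving $H_0 \in \ccB(H)$ elementarily from the representation (v) rather than from Kuratowski's theorem itself, which is a harmless (and arguably cleaner) variant.
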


\begin{proof}
All statements are straightforward to check. For (vii) note that $\ccB(H)_{H_0} \subset \ccB(H_0)$ by the continuity of the identity operator in (vi), and hence by Kuratowski's theorem (see, for example \cite[Thm. I.3.9]{Parthasarathy}) we obtain $H_0 \in \ccB(H)$ and $\ccB(H_0) = \ccB(H)_{H_0}$.
\end{proof}

Informally, we will call a space of the form $H_0 = T(H)$, as provided in Lemma \ref{lemma-zusammenziehen}, a retracted subspace with compact embedding.

\begin{lemma}\label{lemma-function-commute}
Let $T \in K^{++}(H)$ be a compact linear operator as in Lemma \ref{lemma-zusammenziehen}, and let $f : H \to H$ be a mapping such that $fT = Tf$. Then the following statements are true:
\begin{enumerate}[(i)]
\item We have $f(H_0) \subset H_0$.

\item If $f$ is continuous with respect to $\| \cdot \|_H$, then $f|_{H_0} : H_0 \to H_0$ is continuous with respect to $\| \cdot \|_{H_0}$.

\item If $f$ satisfies the linear growth condition with respect to $\| \cdot \|_H$, then $f|_{H_0} : H_0 \to H_0$ satisfies the linear growth condition with respect to $\| \cdot \|_{H_0}$.
\end{enumerate}
\end{lemma}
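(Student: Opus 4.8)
The plan is to reduce all three statements to a single structural observation: on $H_0 = T(H)$ the commutation hypothesis $fT = Tf$ lets us factor the restriction as $f|_{H_0} = T \circ f \circ T^{-1}$. Indeed, for $x \in H_0$ I would write $x = Ty$ with $y = T^{-1}x \in H$ (which makes sense because $T$ is a bijection onto $H_0$), and then $f(x) = f(Ty) = (fT)(y) = (Tf)(y) = T\big(f(T^{-1}x)\big)$. This identity is the engine behind all three parts.

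For (i), the same computation already shows $f(x) = T\big(f(T^{-1}x)\big) \in T(H) = H_0$, since $f$ maps $H$ into $H$; thus $f(H_0) \subset H_0$, and no continuity or growth hypothesis is needed at this stage. For (ii) and (iii), I would invoke Lemma \ref{lemma-zusammenziehen}(ii), by which $T : (H,\|\cdot\|_H) \to (H_0,\|\cdot\|_{H_0})$ is an isometric isomorphism; in particular $\|Ty\|_{H_0} = \|y\|_H$ for $y \in H$ and $\|T^{-1}x\|_H = \|x\|_{H_0}$ for $x \in H_0$, and both $T$ and $T^{-1}$ are continuous between the respective normed spaces.

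For the continuity claim (ii), the factorization $f|_{H_0} = T \circ f \circ T^{-1}$ exhibits $f|_{H_0}$ as a composition of maps that are each continuous for the relevant norms: $T^{-1}$ from $(H_0,\|\cdot\|_{H_0})$ to $(H,\|\cdot\|_H)$, then $f$ on $(H,\|\cdot\|_H)$ by hypothesis, then $T$ back to $(H_0,\|\cdot\|_{H_0})$; hence the composition is continuous. For the linear growth bound (iii), I would simply chase the two isometries through the factorization: for $x \in H_0$,
\[ \|f(x)\|_{H_0} = \big\|T\big(f(T^{-1}x)\big)\big\|_{H_0} = \|f(T^{-1}x)\|_H \le C\big(1 + \|T^{-1}x\|_H\big) = C\big(1 + \|x\|_{H_0}\big), \]
where $C$ is the linear growth constant of $f$ on $(H,\|\cdot\|_H)$; notably the same constant works.

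As for difficulty, there is essentially no obstacle once the factorization is spotted. The only point requiring care is the bookkeeping of which norm is used on which side of each arrow, and the clean way to handle this is precisely to route everything through the isometry $T$ of Lemma \ref{lemma-zusammenziehen}(ii) rather than computing in coordinates with the weights $\lambda_k$. The whole proof therefore amounts to writing down the identity $f|_{H_0} = T f T^{-1}$ and reading off each conclusion from the isometry property.
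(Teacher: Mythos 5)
Your proof is correct and follows exactly the paper's own argument: the paper likewise writes $f|_{H_0} = fTT^{-1} = TfT^{-1}$ and then invokes the isometric isomorphism $T : (H,\|\cdot\|_H) \to (H_0,\|\cdot\|_{H_0})$ from Lemma \ref{lemma-zusammenziehen}(ii) to conclude all three parts. You merely spell out the continuity and linear growth bookkeeping that the paper leaves implicit.
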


\begin{proof}
Recall that $H_0 = T(H)$. Since $fT = Tf$, we have
\begin{align*}
f|_{H_0} = f T T^{-1} = T f T^{-1},
\end{align*}
showing that $f(H_0) \subset H_0$. By Lemma \ref{lemma-zusammenziehen} the operator $T : (H,\| \cdot \|_{H}) \to (H_0,\| \cdot \|_{H_0})$ is an isometric isomorphism, proving the remaining statements.
\end{proof}

\end{appendix}

\bibliographystyle{tsagsm}
\bibliography{thorsten}

@string{springer = {Springer Verlag. Berlin Heidelberg New York}}

@book{Pazy92,
  title={Semigroups of Linear Operators and Applications to Partial Differential Equations},
  author={Pazy, A.},
  number={Bd. 44},
  series={Applied Mathematical Sciences},
  year={1992},
  publisher={Springer}
}

@article{FilipovicLarsson2016,
  title={Polynomial diffusions and applications in finance},
  author={Filipovi{\'c}, Damir and Larsson, Martin},
  journal={Finance and Stochastics},
  volume={20},
  number={4},
  pages={931--972},
  year={2016},
}

@article{KellerRessel2013,
author = "Keller-Ressel, Martin and Schachermayer, Walter and Teichmann, Josef",
doi = "10.1214/EJP.v18-2043",
journal = "Electronic Journal of Probability",
pages = "17 pp.",
number = "43",
title = "Regularity of affine processes on general state spaces",
volume = "18",
year = "2013"
}

@ARTICLE{CuchieroKellerresselTeichmann2012,
   author = {{Cuchiero}, C. and {Keller-Ressel}, M. and {Teichmann}, J.},
    title = "{Polynomial processes and their applications to mathematical Finance}",
  journal = {Finance and Stochastics},
     year = 2012,
     pages = {711--740},
     volume = {16},
     number = {4}
}

@book {BirkhoffRota1989,
    AUTHOR = {Birkhoff, Garrett and Rota, Gian-Carlo},
     TITLE = {Ordinary differential equations},
   EDITION = {Fourth},
 PUBLISHER = {John Wiley \& Sons Inc.},
   ADDRESS = {New York},
      YEAR = {1989},
     PAGES = {xii+399},
      ISBN = {0-471-86003-4},
   MRCLASS = {34-01},
  MRNUMBER = {972977 (90h:34001)},
MRREVIEWER = {Michal {\v{C}}ver{\v{c}}ko},
}

@article{Weissler1979,
title = "Semilinear evolution equations in {Banach} spaces ",
journal = "Journal of Functional Analysis ",
volume = "32",
number = "3",
pages = "277--296",
year = "1979",
note = "",
issn = "0022-1236",
doi = "http://dx.doi.org/10.1016/0022-1236(79)90040-5",
url = "http://www.sciencedirect.com/science/article/pii/0022123679900405",
author = "Fred B. Weissler",

}

@article{KellerResselSchachermayerTeichmann2011,
year={2011},
issn={0178-8051},
journal={Probability Theory and Related Fields},
volume={151},
number={3-4},
doi={10.1007/s00440-010-0309-4},
title={Affine processes are regular},
url={http://dx.doi.org/10.1007/s00440-010-0309-4},
publisher={Springer-Verlag},
keywords={Affine processes; Regularity; Characteristic function; Semiflow; 60J25; 39B32},
author={Keller-Ressel, Martin and Schachermayer, Walter and Teichmann, Josef},
pages={591-611},
language={English}
}

@article{Filipovic05,
title = "Time-inhomogeneous affine processes",
journal = "Stochastic Processes and Their Applications ",
volume = "115",
number = "4",
pages = "639--659",
year = "2005",
issn = "0304-4149",
author = "Damir Filipovi\'c",
keywords = "Affine processes",
keywords = "Time-inhomogeneity "
}

@book{JacodShiryaev,
	Address = {Berlin},
	Author = {Jacod, J. and Shiryaev, A. N.},
	Edition = {2nd},
	Publisher = {Springer Verlag},
	Title = {Limit Theorems for Stochastic Processes},
	Year = {2003}}

@article{DuffieFilipovicSchachermayer,
	Author = {D. Duffie and D. Filipovi\'{c} and W. Schachermayer},
	Journal = {Annals of Applied Probability},
	Pages = {984--1053},
	Title = {Affine processes and applications in finance},
	Volume = {13},
	Number = {3},
	Year = {2003}}

@book{ethier-kurtz-86,
	Address = {New York},
	Author = {Ethier, S. and Kurtz, T.G.},
	Publisher = {Wiley},
	Title = {Markov Processes: Characterization and Convergence},
	Year = 1986}

@book{Filipovic2009,
	Author = {Damir Filipovi\'{c}},
	Publisher = Springer,
	Title = {Term Structure Models: A Graduate Course},
	Year = {2009}}

@book{Werner-2007,
	Author = {D. Werner},
	Publisher = {Springer},
	Title = {Funktionalanalysis},
	Year = {2007}}

@book{DaPratoZabczyk,
	Author = {G. {Da Prato} and Jerzy Zabczyk},
	Publisher = {Cambridge University Press},
	Edition = {2nd},
	Title = {Stochastic Equations in Infinite Dimensions},
	Year = {2014}}

@article{GMR,
  title={Existence of weak solutions for stochastic differential equations and martingale solutions for stochastic semilinear equations},
  author={Gawarecki, L and Mandrekar, V and Richard, P},
  journal={Random Operators and Stochastic Equations},
  volume={7},
  number={3},
  pages={215--240},
  year={1999}
}

@book{Atma-book,
author={Gawarecki, L. and Mandrekar, V.},
title={Stochastic differential equations in infinite dimensions with applications to {SPDE}s},
publisher={Springer},
address={Berlin},
year={2011},
}

@article{Roeckner,
  title={{Y}amada-{W}atanabe theorem for stochastic evolution equations in infinite dimensions},
  author={R\"{o}ckner, M and Schmuland, B and Zhang, X},
  journal={Condensed Matter Physics},
  volume={11},
  number={2},
  pages={247--259},
  year={2008}
}

@article{Filipovic-Mayerhofer,
  title={Affine diffusion processes: {T}heory and applications},
  author={Filipovi{\'c}, D and Mayerhofer, M},
  journal={Radon Series Comp. Appl. Math.},
  volume={8},
  pages={1--40},
  year={2009}
}

@inproceedings{Ondrejat,
	Author = {Ondrej\'{a}t, M},
	Booktitle = {Dissertationes Mathematicae},
	Title = {Uniqueness for stochastic evolution equations in {B}anach spaces},
	Year = {2004}}

@article{Tappe-affin,
  title={Affine realizations with affine state processes for stochastic partial differential equations},
  author={Tappe, S},
  journal={Stochastic Processes and Their Applications},
  volume={126},
  number={7},
  pages={2062--2091},
  year={2016}
}

@article{Tappe-YW,
  title={The {Y}amada-{W}atanabe theorem for mild solutions to stochastic partial differential equations},
  author={Tappe, S},
  journal={Electronic Communications in Probability},
  volume={18},
  number={24},
  pages={1--13},
  year={2013}
}

@article{Xie,
  title={On pathwise uniqueness of stochastic evolution equations in {Hilbert} spaces},
  author={Xie, B},
  journal={Mathematical Analysis and Applications},
  volume={344},
  number={1},
  pages={204--216},
  year={2008}
}

@article{CuchieroLarssonSvaluto2018,
   author = {Cuchiero, C and Larsson, M and Svaluto-Ferro, S},
    title = {Probability measure-valued polynomial diffusions},
  journal = {Electronic Journal of Probability},
   volume = {24},
   number = {30},
    pages = {1--32},
     year = {2019}
 }

@article{BenthRuedigerSuess2018,
  title={Ornstein-{U}hlenbeck processes in {H}ilbert space with non-{G}aussian stochastic volatility},
  author={Benth, F E and R\"{u}diger, B and S\"{u}ss, A},
  journal={Stochastic Processes and Their Applications},
  volume={128},
  number={2},
  pages={461--486},
  year={2018}
}

@article{BenthSimonsen2018,
  title={The {H}eston stochastic volatility model in {H}ilbert space},
  author={Benth, F E and Simonsen, I C},
  journal={Stochastic Analysis and Applications},
  volume={36},
  number={4},
  pages={733--750},
  year={2018}
}

@ARTICLE{BenthDeteringKruehner2018,
   author = {Benth, F E and Detering, N and Kr\"{u}hner, P},
    title = {Independent increment processes: {A} multilinearity preserving property},
  journal = {arXiv:1809.01336v2},
     year = 2019
 }

@phdthesis{YuWeijun,
	Author = {W. Yu},
	School = {Albert Ludwig University of Freiburg},
	Title = {Infinite Dimensional Affine Term Structure Models under Incomplete Information},
	Year = {2017}}

@phdthesis{Grafendorfer,
	Author = {G. Grafendorfer},
	School = {ETH Z\"{u}rich},
	Title = {Infinite-Dimensional Affine Processes},
	Year = {2016}}

@phdthesis{Tomczyk,
	Author = {J. S. Tomczyk},
	School = {The University of Sydney},
	Title = {Correlated square root process in finite and infinite dimensions},
	Year = {2018}}

@article{Cuchiero2018,
  title={Polynomial processes in stochastic portfolio theory},
  author={Cuchiero, C},
  journal={Stochastic Processes and Their Applications},
  volume={129},
  number={5},
  pages={1829--1872},
  year={2019}
}

@article{CuchieroLarssonSvaluto2018b,
  title={Polynomial jump-diffusions on the unit simplex},
  author={Cuchiero, C and Larsson, M and Svaluto-Ferro, S},
  journal={Annals of Applied Probability},
  volume={28},
  number={4},
  pages={2451--2500},
  year={2018}
}

@article{CFMT2010,
  title={Affine processes on positive semi definite matrices},
  author={Cuchiero, C and Filipovi\'{c}, D and Mayerhofer, E. and Teichmann, J},
  journal={Annals of Applied Probability},
  volume={21},
  number={2},
  pages={397--463},
  year={2010}
}

@article{CKMT2016,
  title={Affine processes on symmetric cones},
  author={Cuchiero, C and Keller-Ressel, M and Mayerhofer, E. and Teichmann, J},
  journal={Journal of Theoretical Probability},
  volume={29},
  number={2},
  pages={359--422},
  year={2016}
}

@article{SpreijVeerman,
  title={Affine processes on symmetric cones},
  author={Spreij, P and Veerman, E},
  journal={Stochastic Analysis and Applications},
  volume={30},
  number={4},
  pages={605--641},
  year={2012}
}

@article{FadinaNeufeldSchmidt,
  title={Affine processes under parameter uncertainty},
  author={Fadina, T and Neufeld, A and Schmidt, T},
  journal={Probability, Uncertainty and Quantitative Risk},
  volume={4},
  number={5},
  pages={1--35},
  year={2019}
}

@article{KRSchmidtWardenga,
  title={Affine processes beyond stochastic continuity},
  author={Keller-Ressel, M and Schmidt, T and Wardenga, R},
  journal={Annals of Applied Probability},
  volume={29},
  number={6},
  pages={3387-3437},
  year={2019}
}

@article{Hambly,
  title={Stochastic Evolution Equations for Large Portfolios of Stochastic Volatility Models},
  author={Hambly, B and Kolliopoulos, N},
  journal={SIAM Journal on Financial Mathematics},
  volume={8},
  number={1},
  pages={962--1014},
  year={2017}
}

@article{KR-Mayerhofer,
  title={Exponential moments of affine processes},
  author={Keller-Ressel, M and Mayerhofer, E},
  journal={Annals of Applied Probability},
  volume={25},
  number={2},
  pages={714--752},
  year={2015}
}

@incollection{CuchieroTeichmann2013,
title={Path properties and regularity of affine processes on general state spaces},
author={Cuchiero, C and Teichmann, J},
booktitle={S\'{e}minaire de Probabilit\'{e}s XLV},
volume={2078},
series={Lecture Notes in Mathematics},
publisher={Springer},
address={Cham},
pages={201-244},
year={2007}
}

@article{Handa,
  title={Ergodic properties for $\alpha$-{CIR} models and a class of generalized {F}leming-{V}iot processes},
  author={Handa, K},
  journal={Electronic Journal of Probability},
  volume={19},
  number={65},
  pages={1--25},
  year={2014}
}

@incollection{GabrielliTeichmann2018,
   author = {Gabrielli, N and Teichmann, J},
    title = {Pathwise construction of affine processes},
booktitle = {Innovations in Insurance, Risk- and Asset Management},
publisher = {World Scientific},
    pages = {185--312},
     year = 2018
 }

@article{Yamada-Watanabe-1971,
  title={On the uniqueness of solutions of stochastic differential equations},
  author={Yamada, T and Watanabe, S},
  journal={J. Math. Kyoto Univ.},
  volume={11},
  number={1},
  pages={155--167},
  year={1971}
}

@book{Parthasarathy,
	Address = {New York},
	Author = {Parthasarathy, K. R.},
	Publisher = {Academic Press},
	Title = {Probability measures on metric spaces},
	Year = {1967}}

@book{Liu-Roeckner,
author={Liu, W. and R\"{o}ckner, M.},
title={Stochastic partial differential equations: {A}n introduction},
publisher={Springer},
address={Cham},
year={2015},
}

@article{Kurtz-2007,
  title={The {Y}amada-{W}atanabe-{E}ngelbert theorem for general stochastic equations and inequalities},
  author={Kurtz, T. G.},
  journal={Electronic Journal of Probability},
  volume={12},
  number={33},
  pages={951--965},
  year={2007}
}

@article{Kurtz-2014,
  title={Weak and strong solutions of general stochastic models},
  author={Kurtz, T. G.},
  journal={Electronic Communications in Probability},
  volume={19},
  number={58},
  pages={1--16},
  year={2014}
}

\end{document}